\theoremstyle{thmstyleone}%
\newtheorem{theorem}{Theorem}
\newtheorem{proposition}[theorem]{Proposition}%
\newtheorem{lemma}{Lemma}
\newtheorem{corollary}{Corollary}
\theoremstyle{thmstyletwo}%
\theoremstyle{thmstylethree}%
\newtheorem{definition}{Definition}%
\newcommand{\up}[1]{^{\mathrm{#1}}}
\pgfplotsset{compat=1.9}
\begin{document}

\title[How long is long enough? Finite-horizon approximation of energy storage scheduling problems]{How long is long enough? Finite-horizon approximation of energy storage scheduling problems} 


\author*[1]{\fnm{Eléa} \sur{Prat}}\email{emapr@dtu.dk}
\author[1]{\fnm{Richard M.} \sur{Lusby}}\email{rmlu@dtu.dk}
\author[2]{\fnm{Juan Miguel} \sur{Morales}}\email{juan.morales@uma.es}
\author[3]{\fnm{Salvador} \sur{Pineda}}\email{spineda@uma.es}
\author[4,1,5,6]{\fnm{Pierre} \sur{Pinson}}\email{p.pinson@imperial.ac.uk}

\affil[1]{\orgdiv{Department of Technology, Management and Economic}, \orgname{Technical University of Denmark}, \orgaddress{\city{Lynbgy}, \country{Denmark}}}
\affil[2]{\orgdiv{Department of Mathematical Analysis, Statistics and Operations Research \& Applied Mathematics}, \orgname{Universidad de Málaga}, \orgaddress{\city{Málaga}, \country{Spain}}}
\affil[3]{\orgdiv{Department of Electrical Engineering, Universidad de Málaga}, \orgname{Universidad de Málaga}, \orgaddress{\city{Málaga}, \country{Spain}}}
\affil[4]{\orgdiv{Dyson School of Design Engineering}, \orgname{Imperial College London}, \orgaddress{\city{London}, \country{UK}}}
\affil[5]{\orgname{Halfspace}, \orgaddress{\city{Copenhagen}, \country{Denmark}}}
\affil[6]{\orgdiv{CoRE}, \orgname{Aarhus University}, \orgaddress{\city{Aarhus}, \country{Denmark}}}

\abstract{Energy storage scheduling problems, where a storage is operated to maximize its profit in response to a price signal, are essentially infinite-horizon optimization problems as storage systems operate continuously, without a foreseen end to their operation.
Such problems can be solved to optimality with a rolling-horizon approach, provided that the planning horizon over which the problem is solved is long enough. Such a horizon is termed a \emph{forecast horizon}.
However, the length of the planning horizon is usually chosen arbitrarily for such applications. We introduce an easy-to-check condition that confirms whether a planning horizon is a forecast horizon, and which can be used to derive a bound on suboptimality when it is not the case.
By way of an example, we demonstrate that the existence of forecast horizons is not guaranteed for this problem. We also derive a lower bound on the length of the minimum forecast horizon.
We show how the condition introduced can be used as part of an algorithm to determine the minimum forecast horizon of the problem, which ensures the determination of optimal solutions at the lowest computational and forecasting costs.
Finally, we provide insights into the implications of different planning horizons for a range of storage system characteristics.}

\keywords{Energy storage system, Rolling horizon, Infinite horizon, Forecast horizon}



\maketitle

\section{Introduction}\label{sec:intro}

Energy storage systems are key enablers in the transition to decarbonized energy systems. They guarantee the reliable operation of networks in which a large share of the production, coming from renewable sources, is variable and uncertain. For these reasons, their presence in power systems is expected to increase significantly in the coming years. As an example, worldwide investment in battery storage has doubled year-on-year since 2020 \citep{IEA2024Invest}. Storage systems with different durations of charge and discharge -- which corresponds to the time they need to fully charge or discharge -- are required in order to cover flexibility needs at different time scales, including seasonal storage \citep{Zhang2021Review,Schmidt2023Monetizing,Yang2024Role}. Therefore, the question of how to optimally operate these storage systems has never been more relevant.

Scheduling an energy storage system involves determining when to charge or discharge to maximize a desired objective.
As storage systems introduce links between periods, and as there is no specific end date to their operation, this problem should be modeled as having an infinite horizon. 
This infinite-horizon aspect is usually disregarded in the literature; however, there are some exceptions. \cite{Nascimento2013Optimal} consider an infinite-horizon problem, but their approach is inexact as it is based on approximating the objective function of the problem. \cite{vandeVen2013Optimal} do treat the problem as having an infinite horizon and show that the optimal policy has a structure with two energy threshold levels. When the state of energy is below the lower threshold, the system charges, and it discharges when the state of energy is above the upper threshold. However, their results are based on the assumption that costs are discounted, and they only manage to get an analytical expression for these thresholds in cases with perfect efficiencies. A similar result is obtained in \cite{Harsha2015Optimal}, under the assumption of positive prices.
Other papers, such as \cite{Finnah2022Integrated}, mention the extension to an infinite horizon as a future research direction.
Currently, an exact method that can solve the infinite-horizon problem while considering the possibility of inefficiencies, negative prices, and undiscounted future costs does not exist.

To ensure that decisions made for scheduling an energy storage system are future-aware, a common approach is to solve a rolling-horizon version of the problem. The problem is solved over a longer horizon, but only the decisions in the first periods are implemented. The rest of the horizon is only advisory. The window is then shifted to make the next decisions. The horizon corresponding to the decisions that must be implemented is called the \emph{decision horizon}, and the longer horizon over which the problem is solved is the \emph{planning horizon}. The advantages of a rolling horizon are demonstrated in, e.g., \cite{Secomandi2015Merchant}. If the planning horizon is long enough, this approach may actually obtain an optimal solution of the infinite-horizon problem. In this case, the planning horizon is said to be a \emph{forecast horizon}. However, if the planning horizon is not a forecast horizon, then the best solution obtainable might be much worse than that of the infinite-horizon problem. For most energy storage system technologies, the investment costs are very high \citep{Harsha2015Optimal}. Therefore, operating them closer to the optimal infinite-horizon schedule is crucial to ensure profitability \citep{Weitzel2018Energy}.
On the other hand, if the planning horizon is too long, it can result in unnecessarily high forecasting costs, which may diminish their operational profits \citep{Bardhan2013Forecast}. 
It is therefore essential to study the question of forecast horizons for problems with energy storage systems and to be able to determine the \emph{minimum forecast horizon}.

Though the length of the planning horizon in the rolling-horizon approach is crucial, it is rarely discussed, and often simply given as a number, with no justification \citep{Weitzel2018Energy, Mercier2023Value, Diller2024Dynamic}. Identifying an appropriate planning horizon is described as a challenge and a research gap for problems with storage systems by \cite{Sioshansi2021Energy}. 
\cite{Cuisinier2022New} propose to aggregate future time intervals to increase the length of the planning horizon at a minimum computational cost. This aggregation is again arbitrary.
Some authors do discuss the length of the planning horizon but use an empirical approach, by looking at the change in their solution when the horizon is varied, see, e.g., \cite{Houwing2017Least, Kannan2011Game, DelReal2014Combined, Mayhorn2017Multi}. 
The conclusions are only applicable to the respective case study considered and do not give guidance as to how to proceed in general.
A systematic approach to evaluating planning horizons is thus missing.

As a first step toward addressing this challenge, we focus on problems using deterministic forecasts, which are widely adopted in both practice and academic research due to their analytical tractability \citep{Weitzel2018Energy}. While stochastic methods offer a more realistic representation by explicitly modeling uncertainty, developing a theoretical analysis of planning horizons is significantly more tractable in a deterministic framework and serves as a necessary foundation for future extensions to stochastic settings.
An attempt at proposing such a theoretical analysis in a deterministic framework was made by \cite{Cruise2019Control}, who introduced an algorithm to determine planning and decision horizons for a finite-horizon energy storage scheduling problem.
The algorithm finds the respective lengths of the decision and planning horizons for which the solution to their full-horizon problem is obtained, based on properties of the primal and dual variables obtained from the optimality conditions of the problem.
This algorithm is applied in \cite{Cruise2018Impact} and further extended in \cite{Anjos2020Control}. 
However, their proofs are based on the assumption that the problem is finite and are no longer valid when this assumption is relaxed.
In other works, it has been found that if in the resulting schedule, the storage system hits both its minimum and maximum capacity limits (in any order), the horizon is a forecast horizon; see \cite{Jesudasan2014Scheduling, Flatley2016Optimal, Zhao2019Determining}. A similar result has long been known for the wheat trading problem, see for example \cite{Hartl1986Forward}.
In all these related works, however, the conditions for a planning horizon to be in a forecast horizon in storage scheduling problems are sufficient but not necessary, and therefore, cannot be used to determine the minimum forecast horizon.

The finite-horizon approximation of infinite-horizon dynamic problems is not a new area. The problem has been extensively studied in the literature, particularly within the fields of inventory management and lot sizing, see e.g., \cite{Chand2002Forecast}. 
However, in the case of energy storage scheduling, some specific assumptions, such as capacities, rate of charge/discharge, losses, and time-varying prices, are needed. All of these have not been simultaneously considered previously, and the results of the papers surveyed by \cite{Chand2002Forecast} cannot be immediately extended.
Nevertheless, some insights from these applications are useful. 
\cite{Bhaskaran1987Decision} show that, in the case of production planning and warehousing problems, if the solutions over the decision horizon are the same when solving two versions of the problem, for both extreme values of the terminal conditions, a forecast horizon has been found.
However, the extreme values considered are upper and lower bounds on the future value of the current decisions, which can be difficult to evaluate.
Moreover, the results are only proven for finite-horizon problems.
A similar condition is included in an algorithm for detecting forecast horizons by \cite{Cheevaprawatdomrong2004Infinite}.
These results are valid for infinite-horizon problems but are based on the assumption of discounted future value.
An application to hydro-reservoir operation is presented in \cite{Zhao2012Identifying}. However, their theoretical result is based on the assumption of diminishing marginal utility, which does \emph{not} apply in the case of energy storage system scheduling.
There is also no mention of the minimum forecast horizon in the last three papers cited above.
In contrast, we formally prove that, for energy storage scheduling problems, solving two instances under extreme end-of-horizon conditions allows us to identify forecast horizons and determine the minimum necessary forecast horizon. These extreme conditions correspond to the minimum and maximum achievable states of energy at the end of the planning horizon, which can be computed easily.

This paper introduces a simple \emph{necessary and sufficient condition} for a chosen planning horizon to be a forecast horizon for the scheduling problem of an energy storage system, considering losses due to leakage and inefficiencies when charging and discharging, as well as the possibility of negative prices. 
We show that this condition can also be used to obtain a bound on suboptimality when the planning horizon is not a forecast horizon.
We illustrate that the existence of a forecast horizon is not guaranteed, which can limit the performance of methods based on a finite horizon. 
Though the length of the minimum forecast horizon cannot be evaluated without solving the problem, we obtain an analytical formula that gives a lower bound based on the characteristics of the storage system.
Finally, we give an algorithm that uses the identified condition to determine the minimum forecast horizon for the scheduling problem. We apply it for various characteristics of the storage system. We also compare the results obtained to those of a rolling horizon with a planning horizon of arbitrary length.

The rest of this paper is organized as follows. In Sect. \ref{sec:setup}, we introduce the storage scheduling problem and formally define the different horizons. In Sect. \ref{sec:prop}, properties of forecast horizons are presented, including the simple condition to identify them, and a lower bound on the forecast horizon length. An algorithm to retrieve the minimum forecast horizon is also described. 
In Sect. \ref{sec:ex}, the previous results are applied in various case studies. Section \ref{sec:ccl} concludes the paper. The complete proofs of the main results, including intermediary results and their proofs, are given in the appendices.
\section{Problem set-up and definitions}\label{sec:setup}

\subsection{Model and assumptions}

We consider the problem of scheduling a price-taker energy storage system, using energy price forecasts, and taking into account the operating constraints. We use a minimum representation of the storage system, including leakage and charging and discharging efficiencies, as well as limits for the state of energy and for charging and discharging. We consider that the storage system can be continuously dispatched between the minimum and maximum capacities.

The model when scheduling the operation of the storage over the time periods $\mathcal{T}=\{1,2,...,T\}$ is
\begin{subequations} \label{pb:opt}
\allowdisplaybreaks
\begin{align}
    \label{eq:obj} \max_{\mathbf{p\up{D}, p\up{C}, s}} \quad & \sum_{t \in \mathcal{T}} \Delta t \, C_t (p_t\up{D}-p_t\up{C}) \\
    \label{eq:init} \text{s.t.} \quad & s_1 = \rho S\up{init} +  \Delta t \left(\eta\up{C} p_1\up{C} - \frac{1}{\eta\up{D}} p_1\up{D}\right) \, , \\
    \label{eq:update} & s_t = \rho s_{t-1} +  \Delta t \left(\eta\up{C} p_t\up{C} - \frac{1}{\eta\up{D}} p_t\up{D}\right) \, ,& \, \forall t \in \mathcal{T} \setminus \{1\} \, , \\
    \label{eq:bounds_s} & \underline{S} \leq s_t \leq \overline{S} \, ,& \forall t \in \mathcal{T} \, , \\
    \label{eq:bounds_pc} & 0 \leq p_t\up{C} \leq \overline{P}\up{C} \, ,& \forall t \in \mathcal{T} \, , \\
    \label{eq:bounds_pd} & 0 \leq p_t\up{D} \leq \overline{P}\up{D} \, ,& \forall t \in \mathcal{T} \, \\
    \label{eq:cc} &  p_t\up{C} p_t\up{D} = 0 \, ,& \forall t \in \mathcal{T} .
\end{align}
\end{subequations}
The decision variables are the state of energy of the storage system, $s_t$, and the power charged and discharged during each time period of duration $\Delta t$, $p_t\up{C}$, and $p_t\up{D}$. These variables are bounded in \eqref{eq:bounds_s}-\eqref{eq:bounds_pd}. Parameters $\underline{S}$ and $\overline{S}$ are the minimum and maximum state of energy, $\overline{P}\up{C}$ is the maximum rate of charge, and $\overline{P}\up{D}$ is the maximum rate of discharge.
The objective, given in \eqref{eq:obj}, is to maximize the profit from arbitrage, using the storage system. The energy price paid if charging, or received if discharging, is $C_t$. We gather these prices in the vector $\textbf{C} \in \mathcal{C}_\mathcal{T}$, where $\mathcal{C}_\mathcal{T}$ is the set of all possible price vectors over $\mathcal{T}$.
Constraints \eqref{eq:init} and \eqref{eq:update} update the state of energy for the first time period and for the rest of the time periods, respectively. The level of energy initially available in the storage is given by $S\up{init}$. We consider that the storage system has leakage, such that the energy left at the end of each time period is multiplied by a factor $\rho \in \, ]0,1]$ at the beginning of the next time period \citep{Jesudasan2014Scheduling,Cruise2019Control}. We also consider losses when charging and discharging using efficiencies $\eta\up{C} \in \, ]0,1]$ and $\eta\up{D} \in \, ]0,1]$, and denote the round-trip efficiency $\eta = \eta\up{C} \eta\up{D}$.
Finally, constraint \eqref{eq:cc} prevents the simultaneous charge and discharge of the storage. This can be linearized using binary variables, as explained by \cite{Pozo2022Linear}. 
We name this model $\textbf{S}(\mathcal{T},\textbf{C})$ and refer to its infinite-horizon variant as $\textbf{S}(\mathbb{N}^+,\textbf{C})$.

\subsection{Definition of the different horizons}

We formally define the different types of horizons and illustrate them with the help of Fig.~\ref{fig:horizons}.
We assume that the decision horizon, which corresponds to the horizon for which we commit the decisions, is fixed by the decision-maker.
In the following, $\mathcal{H}$ represents the set of time periods in the decision horizon, with $|\mathcal{H}| = H$ being its length. In Fig.~\ref{fig:soe1} and Fig.~\ref{fig:soe2}, $H=24$, so only the first 24 decisions are committed. The set of time periods in the planning horizon, which is the horizon used to solve the problem, is $\mathcal{T}$, and is assumed to have a length $|\mathcal{T}| = T \geq H$. In the illustrative examples, we have two different planning horizons, in particular, $T=36$ in Fig.~\ref{fig:price1} and Fig.~\ref{fig:soe1}, and $T=48$ in Fig.~\ref{fig:price2} and Fig.~\ref{fig:soe2}.

\begin{figure*}[ht]
  \centering
    \subfloat[Price forecast and two scenarios for $T=36$ \label{fig:price1}]
    {\resizebox{.45\linewidth}{!}{\pgfplotsset{
    myplotstyle/.style={
    legend style={font=\small},
    legend cell align=left,
    legend pos=outer north east,
    tick label style={font=\small},
    ylabel style={align=center},
    xlabel style={align=center},
    axis x line*=bottom,
    axis y line*=left,
    scaled ticks=false,
    every axis plot/.append style={thick},
    },
}

\begin{tikzpicture}
\begin{axis}[
    myplotstyle,
    width=10cm,height=6cm,
    ymin=0, ymax=45, xmin=0, xmax=61,
    xtick = {0,6,12,18,24,30,36,42,48,54,60},
    xlabel={Time period $t$},
    ylabel={Price (€/kWh)},
    legend entries={\\$\widehat{\textbf{C}}^1$\\$\textbf{C}\up{1}$\\$\textbf{C}\up{2}$\\},
]
    \addplot[thick, samples=50, name path=daysplit2, black!75, dashed] coordinates {(36,0)(36,100)};
\addplot+[mark=none, color=black] table[x=t, y=Cf1, col sep=comma] {figures/horizons-plt-3.csv};
\addplot+[mark=none, color=blue, dotted] table[x=t, y=C1, col sep=comma] {figures/horizons-plt-3.csv};
\addplot+[mark=none, color=red, dashed] table[x=t, y=C2, col sep=comma] {figures/horizons-plt-3.csv};
    
        \path (axis cs:0.1, 0) coordinate (origin);
        \path (axis cs:1, 47) coordinate (D1_1);
        \path (axis cs:24, 47) coordinate (D1_2);
        \path (axis cs:1, 47) coordinate (D2_1);
        \path (axis cs:36, 47) coordinate (D2_2);
\end{axis}

\begin{scope}[decoration={calligraphic brace, amplitude=6pt}]
    \draw[thick,decorate] (D2_1) -- (D2_2) node[midway,above=1ex]{$\mathcal{T}$};
\end{scope}

\end{tikzpicture}}}
    \qquad
    \subfloat[Resulting state of energy for $T=36$ \label{fig:soe1}]
    {\resizebox{.45\linewidth}{!}{\pgfplotsset{
    myplotstyle/.style={
    legend style={font=\small},
    legend cell align=left,
    legend pos=outer north east,
    tick label style={font=\small},
    ylabel style={align=center},
    xlabel style={align=center},
    axis x line*=bottom,
    axis y line*=left,
    scaled ticks=false,
    every axis plot/.append style={thick},
    },
}

\begin{tikzpicture}
\begin{axis}[
    myplotstyle,
    width=10cm,height=6cm,
    ymin=0, ymax=100, xmin=0, xmax=61,
    xtick = {0,6,12,18,24,30,36,42,48,54,60},
    xlabel={Time period $t$},
    ylabel={State of energy (kWh)},
    legend entries={\\\\$\widehat{\textbf{C}}^1$\\$\textbf{C}\up{1}$\\$\textbf{C}\up{2}$\\},
]
    \addplot[thick, samples=50, name path=daysplit, black!75, dashed] coordinates {(24,0)(24,100)};
    \addplot[thick, samples=50, name path=daysplit2, black!75, dashed] coordinates {(36,0)(36,100)};
\addplot+[mark=none, color=black] table[x=t, y=sf1, col sep=comma] {figures/horizons-plt-2.csv};
\addplot+[mark=none, color=blue, dotted] table[x=t, y=s1, col sep=comma] {figures/horizons-plt-2.csv};
\addplot+[mark=none, color=red, dashed] table[x=t, y=s2, col sep=comma] {figures/horizons-plt-2.csv};
    \addplot[thick, name path=dh1, gray!25, opacity=0.5] coordinates {(0.05,0.05)(0.05,99.95)};
    \addplot[thick, name path=dh2, gray!25, opacity=0.5] coordinates {(23.95,0.05)(23.95,99.95)};
    \addplot[color=gray!25, opacity=0.5] fill between[of=dh1 and dh2];
    
        \path (axis cs:0.1, 0) coordinate (origin);
        \path (axis cs:1, 102) coordinate (D1_1);
        \path (axis cs:24, 102) coordinate (D1_2);
        \path (axis cs:1, 113) coordinate (D2_1);
        \path (axis cs:36, 113) coordinate (D2_2);
\end{axis}

\begin{scope}[decoration={calligraphic brace, amplitude=6pt}]
    \draw[thick,decorate] (D1_1) -- (D1_2) node[midway,above=1ex]{$\mathcal{H}$};
    \draw[thick,decorate] (D2_1) -- (D2_2) node[midway,above=1ex]{$\mathcal{T}$};
\end{scope}

\end{tikzpicture}}}
    \qquad
    \subfloat[Price forecast and two scenarios for $T=48$ \label{fig:price2}]
    {\resizebox{.45\linewidth}{!}{\pgfplotsset{
    myplotstyle/.style={
    legend style={font=\small},
    legend cell align=left,
    legend pos=outer north east,
    tick label style={font=\small},
    ylabel style={align=center},
    xlabel style={align=center},
    axis x line*=bottom,
    axis y line*=left,
    scaled ticks=false,
    every axis plot/.append style={thick},
    },
}

\begin{tikzpicture}
\begin{axis}[
    myplotstyle,
    width=10cm,height=6cm,
    ymin=0, ymax=45, xmin=0, xmax=61,
    xtick = {0,6,12,18,24,30,36,42,48,54,60},
    xlabel={Time period $t$},
    ylabel={Price (€/kWh)},
    legend entries={\\$\widehat{\textbf{C}}^2$\\$\textbf{C}\up{3}$\\$\textbf{C}\up{4}$\\},
]
    \addplot[thick, samples=50, name path=daysplit2, black!75, dashed] coordinates {(48,0)(48,100)};
\addplot+[mark=none, color=black] table[x=t, y=Cf2, col sep=comma] {figures/horizons-plt-3.csv};
\addplot+[mark=none, color=blue, dotted] table[x=t, y=C3, col sep=comma] {figures/horizons-plt-3.csv};
\addplot+[mark=none, color=red, dashed] table[x=t, y=C4, col sep=comma] {figures/horizons-plt-3.csv};

        \path (axis cs:0.1, 0) coordinate (origin);
        \path (axis cs:1, 47) coordinate (D1_1);
        \path (axis cs:24, 47) coordinate (D1_2);
        \path (axis cs:1, 47) coordinate (D2_1);
        \path (axis cs:48, 47) coordinate (D2_2);
\end{axis}

\begin{scope}[decoration={calligraphic brace, amplitude=6pt}]
    \draw[thick,decorate] (D2_1) -- (D2_2) node[midway,above=1ex]{$\mathcal{T}$};
\end{scope}

\end{tikzpicture}}}
    \qquad
    \subfloat[Resulting state of energy for $T=48$ \label{fig:soe2}]
    {\resizebox{.45\linewidth}{!}{\pgfplotsset{
    myplotstyle/.style={
    legend style={font=\small},
    legend cell align=left,
    legend pos=outer north east,
    tick label style={font=\small},
    ylabel style={align=center},
    xlabel style={align=center},
    axis x line*=bottom,
    axis y line*=left,
    scaled ticks=false,
    every axis plot/.append style={thick},
    },
}

\begin{tikzpicture}
\begin{axis}[
    myplotstyle,
    width=10cm,height=6cm,
    ymin=0, ymax=100, xmin=0, xmax=61,
    xtick = {0,6,12,18,24,30,36,42,48,54,60},
    xlabel={Time period $t$},
    ylabel={State of energy (kWh)},
    legend entries={\\\\$\widehat{\textbf{C}}^2$\\$\textbf{C}\up{3}$\\$\textbf{C}\up{4}$\\},
]
    \addplot[thick, samples=50, name path=daysplit, black!75, dashed] coordinates {(24,0)(24,100)};
    \addplot[thick, samples=50, name path=daysplit2, black!75, dashed] coordinates {(48,0)(48,100)};
\addplot+[mark=none, color=black] table[x=t, y=sf2, col sep=comma] {figures/horizons-plt-2.csv};
\addplot+[mark=none, color=blue, dotted] table[x=t, y=s3, col sep=comma] {figures/horizons-plt-2.csv};
\addplot+[mark=none, color=red, dashed] table[x=t, y=s4, col sep=comma] {figures/horizons-plt-2.csv};
    \addplot[thick, name path=dh1, gray!25, opacity=0.5] coordinates {(0.05,0.05)(0.05,99.95)};
    \addplot[thick, name path=dh2, gray!25, opacity=0.5] coordinates {(23.95,0.05)(23.95,99.95)};
    \addplot[color=gray!25, opacity=0.5] fill between[of=dh1 and dh2];
    
        \path (axis cs:0.1, 0) coordinate (origin);
        \path (axis cs:1, 102) coordinate (D1_1);
        \path (axis cs:24, 102) coordinate (D1_2);
        \path (axis cs:1, 113) coordinate (D2_1);
        \path (axis cs:48, 113) coordinate (D2_2);
\end{axis}

\begin{scope}[decoration={calligraphic brace, amplitude=6pt}]
    \draw[thick,decorate] (D1_1) -- (D1_2) node[midway,above=1ex]{$\mathcal{H}$};
    \draw[thick,decorate] (D2_1) -- (D2_2) node[midway,above=1ex]{$\mathcal{T}$};
\end{scope}

\end{tikzpicture}}}
  \caption{Illustrative example of the different horizons, for a decision horizon $H=24$ and for two different planning horizons} \label{fig:horizons}
\end{figure*}
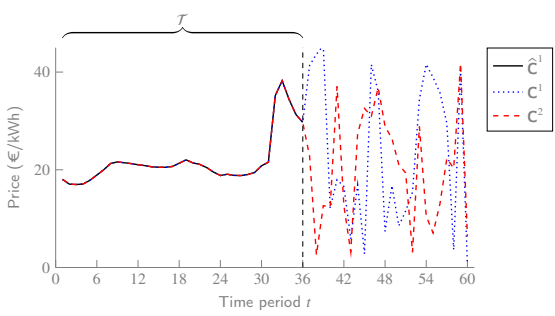
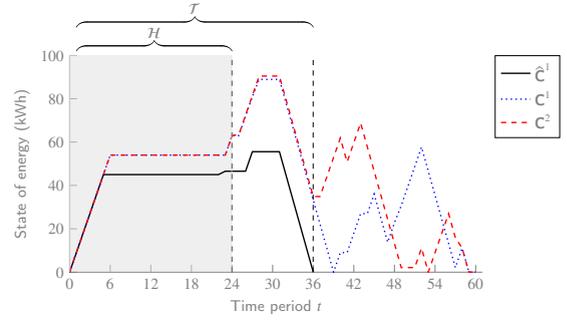
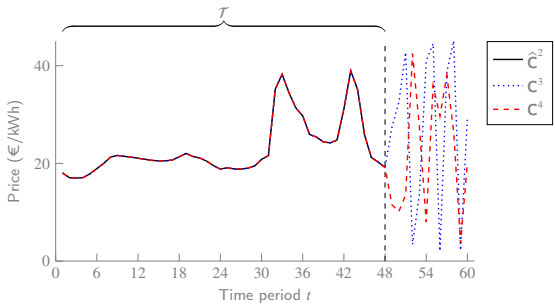
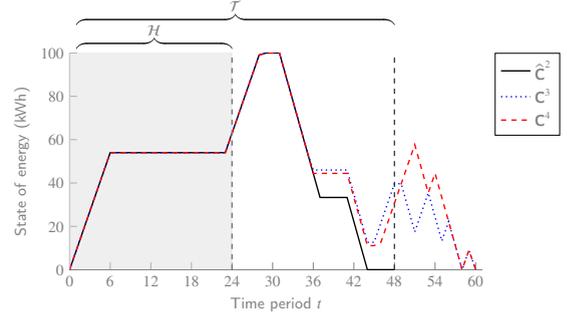

The planning horizon is a forecast horizon if it results in an optimal solution of the infinite-horizon problem \citep{Chand2002Forecast,Ghate2011Infinite}.
We introduce the notation $\mathcal{X}_\mathcal{H}(\textbf{P})$ to represent the set of optimal solutions of problem $\textbf{P}$ over the decision horizon $\mathcal{H}$. Finally, a subscript on $\textbf{C}$ indicates the restriction of the vector to the given set of time periods, for example, $\textbf{C}_\mathcal{T}$.

\begin{definition} [Forecast horizon]
    Consider a decision horizon $\mathcal{H}$ and a planning horizon $\mathcal{T}$. 
    The planning horizon $\mathcal{T}$ is a \textbf{forecast horizon} for the price forecast $\widehat{\textbf{C}}$ if and only if an optimal solution over $\mathcal{H}$ is also optimal over $\mathcal{H}$ in the infinite-horizon problem, regardless of the value of the prices after the end of the planning horizon:
    \begin{align*}
        \mathcal{X}_\mathcal{H}\left(\textbf{S}(\mathcal{T},\widehat{\textbf{C}})\right) \subseteq \mathcal{X}_\mathcal{H}\left(\textbf{S}(\mathbb{N}^+,\textbf{C})\right), \, \forall \textbf{C} \in \mathcal{C}_{\mathbb{N}^+}, \, \textbf{C}_\mathcal{T} = \widehat{\textbf{C}}.
    \end{align*}
\end{definition}
We also use the expression \emph{long enough} to qualify a planning horizon that is a forecast horizon.

In Fig.~\ref{fig:horizons}, we can see that $\mathcal{T}=36$ is not a forecast horizon for the forecast $\widehat{\textbf{C}}^1$ since when we solve the problem for two different price vectors $\textbf{C}\up{1}$ and $\textbf{C}\up{2}$, which are such that $\textbf{C}\up{1}_\mathcal{T}=\textbf{C}\up{2}_\mathcal{T}=\widehat{\textbf{C}}^1$, as seen in Fig.~\ref{fig:price1}, the resulting state of energy, shown in Fig.~\ref{fig:soe1} is not the same over the decision horizon. However, $\mathcal{T}=48$ might be a forecast horizon for the price vector $\widehat{\textbf{C}}^2$, since for two different price vectors $\textbf{C}\up{3}$ and $\textbf{C}\up{4}$, which are such that $\textbf{C}\up{3}_\mathcal{T}=\textbf{C}\up{4}_\mathcal{T}=\widehat{\textbf{C}}^2$, as seen in Fig.~\ref{fig:price2}, the solutions over $\mathcal{H}$ coincide, as shown in Fig.~\ref{fig:soe2}. However, to claim that $\mathcal{T}=48$ is a forecast horizon for $\widehat{\textbf{C}}^2$, this property must hold \emph{for all} $\textbf{C} \in \mathcal{C}_{\mathbb{N}^+}, \, \textbf{C}_\mathcal{T} = \widehat{\textbf{C}}^2$.

Note that if $\mathcal{T}$ is a forecast horizon, any longer planning horizon is also a forecast horizon. We are interested in finding the shortest of these, which we call the \emph{minimum forecast horizon}.
\begin{definition} [Minimum forecast horizon]
    The planning horizon $\mathcal{T}$ is the \textbf{minimum forecast horizon} for the price forecast $\widehat{\textbf{C}}$ if it is a forecast horizon and if all shorter planning horizons are not forecast horizons.
\end{definition}

Next, we show some properties of (minimum) forecast horizons for the storage scheduling problem.
\section{Characterization of forecast horizons}\label{sec:prop}

\subsection{Condition for the identification of forecast horizons} \label{sec:criteria}

It is possible to evaluate the minimum reachable level at the end of a given planning horizon $T$, $\underline{S}_T$, by considering that, from the initial level, the maximum quantity is discharged over the full planning horizon, or until the minimum state of energy is reached, also accounting for losses when discharging and for leakage:
\begin{equation} \label{eq:sT_min}
    \underline{S}_T = \max \left \{\underline{S}, \rho^{T} S\up{init} - \Delta t \sum_{t=0}^{T-1} \rho^t \frac{1}{\eta\up{D}} \overline{P}\up{D}  \right \}.
\end{equation}
Similarly, the maximum reachable level at the end of the planning horizon, $\overline{S}_T$, can be obtained by considering that, from the initial level, the maximum quantity is charged over the full planning horizon, or until the maximum state of energy is reached, accounting for charging losses and for leakage:
\begin{equation} \label{eq:sT_max}
    \overline{S}_T = \min\left \{\overline{S}, \rho^{T} S\up{init} + \Delta t \sum_{t=0}^{T-1} \rho^t \eta\up{C} \overline{P}\up{C}  \right \} .
\end{equation}

We introduce $\textbf{F}(\mathcal{T}, \textbf{C}, S\up{end})$, which is a finite version of \eqref{pb:opt}, for which we have an additional constraint fixing the final state of energy in the storage:
\begin{equation}
    s_T = S\up{end}.
\end{equation}
We identify $\underline{\mathcal{X}}$ as the set of optimal solutions for problem $\textbf{F}(\mathcal{T}, \textbf{C}, \underline{S}_T)$. We use $\mathbf{\underline{x}}$ to identify an element of $\underline{\mathcal{X}}$, and $\underline{x}_t$ to identify the value of variable $x$ at $t$ for solution $\mathbf{\underline{x}}$.
Similarly, $\overline{\mathcal{X}}$ is the set of optimal solutions for problem $\textbf{F}(\mathcal{T}, \textbf{C}, \overline{S}_T)$, with elements $\mathbf{\overline{x}}$, and $\overline{x}_t$ is the value of variable $x$ at $t$ for solution $\mathbf{\overline{x}}$.
More generally, $\mathcal{X}^*$ is the set of optimal solutions for problem $\textbf{F}(\mathcal{T}, \textbf{C}, S\up{end})$, with $\underline{S}_T \leq S\up{end} \leq \overline{S}_T$, with elements $\mathbf{{x}}^*$, and ${x}_t^*$ is the value of variable $x$ at $t$ for solution $\mathbf{x}^*$.

Our result, formalized in Theorem~\ref{theorem1}, states that if, when solving $\textbf{F}(\mathcal{T}, \textbf{C}, \underline{S}_T)$ and $\textbf{F}(\mathcal{T}, \textbf{C}, \overline{S}_T)$, the state of energy at the end of the decision horizon is the same for both problems, then the planning horizon $\mathcal{T}$ is a forecast horizon. The proof is based on different intermediary results and is available in Appendix~\ref{app:proof1}.

\begin{theorem} \label{theorem1}
    The planning horizon $\mathcal{T}$ is a forecast horizon if and only if $\exists \, \underline{\mathbf{x}} \in \underline{\mathcal{X}}$ and $\exists \, \overline{\mathbf{x}} \in \overline{\mathcal{X}}$ such that $\underline{s}_H = \overline{s}_H$.
\end{theorem}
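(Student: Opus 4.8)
The plan is to prove both directions of the equivalence. The key structural insight I would exploit is that fixing $s_T = S\up{end}$ decouples the planning horizon from everything that happens after $T$: the only way decisions in $\mathcal{T}$ interact with future prices is through the amount of energy carried across the boundary at time $T$. So the infinite-horizon problem, restricted to its behavior on $\mathcal{H}$, can be analyzed by asking which terminal levels $S\up{end} \in [\underline{S}_T, \overline{S}_T]$ the optimal infinite-horizon trajectory might realize, and what it does on $\mathcal{H}$ for each such level.

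\textbf{Sufficiency ($\Leftarrow$).} Suppose $\underline{s}_H = \overline{s}_H$ for some $\underline{\mathbf{x}} \in \underline{\mathcal{X}}$, $\overline{\mathbf{x}} \in \overline{\mathcal{X}}$. First I would argue that any optimal solution of $\textbf{S}(\mathbb{N}^+, \textbf{C})$ with $\textbf{C}_\mathcal{T} = \widehat{\textbf{C}}$ must, when restricted to $\mathcal{T}$, be an optimal solution of $\textbf{F}(\mathcal{T}, \textbf{C}, S\up{end})$ for its own realized terminal level $S\up{end} := s_T$ (otherwise one could re-optimize on $\mathcal{T}$ with the same $s_T$, improve the objective on $\mathcal{T}$, and leave the future feasible and unchanged — contradicting infinite-horizon optimality). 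The feasibility of $s_T$ lies in $[\underline{S}_T, \overline{S}_T]$ by the definitions \eqref{eq:sT_min}--\eqref{eq:sT_max}. The crux is then a \emph{monotonicity} claim: the value of $s_H$ in an optimal solution of $\textbf{F}(\mathcal{T}, \textbf{C}, S\up{end})$ is monotone (nondecreasing) in $S\up{end}$. I expect this to be supplied by one of the intermediary results referenced before Theorem~\ref{theorem1}. Granting it, if the two extreme terminal levels yield the same $s_H$, then every intermediate $S\up{end}$ must yield that same $s_H$ on the decision horizon. Hence every infinite-horizon optimum agrees with $\underline{\mathbf{x}}$ on the state of energy at the end of $\mathcal{H}$, and I would then lift this from the endpoint $s_H$ to the full decision-horizon solution, concluding $\mathcal{X}_\mathcal{H}(\textbf{S}(\mathcal{T}, \widehat{\textbf{C}})) \subseteq \mathcal{X}_\mathcal{H}(\textbf{S}(\mathbb{N}^+, \textbf{C}))$ for all admissible $\textbf{C}$.

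\textbf{Necessity ($\Rightarrow$).} I would prove the contrapositive: if $\underline{s}_H \neq \overline{s}_H$ for \emph{all} choices of the two optimal solutions (equivalently $\underline{s}_H < \overline{s}_H$ by monotonicity), then $\mathcal{T}$ is not a forecast horizon. The idea is to construct two price extensions $\textbf{C}$ beyond $T$ that \emph{force} the infinite-horizon optimum to terminate the planning horizon at $\underline{S}_T$ and at $\overline{S}_T$ respectively: one extension making future prices so high that selling nothing now and holding maximum energy is optimal (driving $s_T \to \overline{S}_T$), the other making future prices so low (or negative) that emptying the store is optimal (driving $s_T \to \underline{S}_T$). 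Since these two extensions produce different states of energy at the end of $\mathcal{H}$, no single solution over $\mathcal{H}$ can be simultaneously optimal for both, violating the forecast-horizon definition.

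\textbf{Main obstacle.} The hard part will be the monotonicity of the decision-horizon state $s_H$ in the terminal level $S\up{end}$, together with the step that upgrades agreement of the \emph{endpoint} $s_H$ to agreement of the entire decision-horizon solution set. The latter requires care because optimal solutions may not be unique; I would handle non-uniqueness by phrasing the forecast-horizon condition at the level of the reachable set of states and by using the decoupling observation (once $s_H$ is pinned down, optimizing over $\mathcal{H}$ is an independent subproblem whose optimal-solution set depends only on $\widehat{\textbf{C}}$ and the shared endpoint). I also expect the construction of the two extremal price extensions in the necessity direction to demand a quantitative argument showing that sufficiently extreme future prices genuinely pin $s_T$ to each boundary value — relying again on $\underline{S}_T$ and $\overline{S}_T$ being exactly the reachable extremes from the definitions \eqref{eq:sT_min} and \eqref{eq:sT_max}.
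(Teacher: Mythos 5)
Your plan follows essentially the same route as the paper: sufficiency via the observation that fixing $s_T$ decouples $\mathcal{T}$ from the future, plus a monotonicity/sandwich property of optimal trajectories in the terminal level $S\up{end}$, plus splitting the problem at the common state $s_H$; necessity via price continuations that force the terminal level to $\underline{S}_T$ or $\overline{S}_T$. Your necessity sketch is in fact more explicit than the paper's, which dispatches that direction in a single sentence.

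The genuine gap is that the monotonicity claim you ``expect to be supplied by one of the intermediary results'' is not a background fact to be cited --- it \emph{is} the technical content of the theorem, and you give no argument for it. In the paper it is Lemma \ref{lemma1} and Corollary \ref{corol1}: for any $S\up{end} \in [\underline{S}_T, \overline{S}_T]$ there \emph{exists} an optimal solution of $\textbf{F}(\mathcal{T},\textbf{C},S\up{end})$ whose state of energy is sandwiched between some $\underline{\mathbf{x}} \in \underline{\mathcal{X}}$ and $\overline{\mathbf{x}} \in \overline{\mathcal{X}}$ at \emph{every} period, and its proof occupies most of Appendix \ref{app:proof1}: at the first time $t_1$ where the intermediate trajectory exceeds the upper one and the first return time $t_2$, one perturbs charge or discharge by a feasible $\epsilon$ in each of the four sign cases, uses optimality of both solutions to force price identities such as $C_{t_1} = \rho^{t_2-t_1}C_{t_2}$ or $C_{t_1} = \rho^{t_2-t_1}\eta\up{C}\eta\up{D}C_{t_2}$, and then exhibits an equally optimal perturbed solution that violates the envelope on strictly fewer periods, iterating to termination. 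Two further points of care: (i) your phrasing ``the value of $s_H$ in an optimal solution is monotone in $S\up{end}$'' cannot hold for \emph{every} optimal solution under multiplicity --- it must be stated existentially, exactly as the paper does; and (ii) upgrading agreement at the single point $t=H$ to agreement on all of $[1,H]$ is itself a result (the paper's Lemma \ref{lemma2}, which splits both problems at the common state). Your necessity direction also still owes the quantitative step you flag (that sufficiently extreme continuations genuinely pin $s_T$ to the reachable extremes), but the decisive missing piece is the sandwich lemma: without it, the proposal is a correct skeleton with the load-bearing member absent.
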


This result covers the case of multiple optimal solutions: it is enough to find a solution to each problem such that $\underline{s}_H = \overline{s}_H$.
In Sect.~\ref{sec:algo}, we describe an algorithm for finding the minimum forecast horizon that evaluates the condition in the case of multiple optimal solutions.

If there are no solutions such that $\underline{s}_H = \overline{s}_H$, the planning horizon is not a forecast horizon. However, a small gap between these two values indicates that the error made in this case will be limited.
We next show how the gap between $\underline{s}_H$ and $\overline{s}_H$ is related to suboptimality.

\subsection{An upper bound on suboptimality} \label{sec:subopt}

Evaluating suboptimality with respect to the solution to the infinite-horizon problem is far from being a simple task. 
Indeed, making poor decisions in the current decision horizon could still have an impact long after those decisions were made, and errors in future decision horizons might further amplify the overall suboptimality. 
Therefore, we need to finitely constrain what we understand by suboptimality to be able to evaluate it.
Accordingly, here we assess suboptimality by assuming that an error is only made in the current decision horizon and that all future decisions are optimal (i.e., all future planning horizons are forecast horizons).

We further assume that future prices, outside of the decision horizon, are bounded from below by $\underline{C} \leq 0$ and from above by $\overline{C} \geq 0$. With these assumptions and conditions in place, the following result applies. The proof is given in Appendix~\ref{app:proof3}.

\begin{proposition} \label{prop2}
    For a given level $s_H$ of the storage at the end of the decision horizon such that $\underline{s}_H \leq s_H \leq \overline{s}_H$, the deviation from the infinite-horizon objective $\Delta Z$ is upper-bounded as follows:
    \begin{equation} \label{eq:subopt1}
        \Delta Z \leq Z\up{opt,DH} - Z\up{DH} + \max \left \{- \underline{C} \frac{1}{\eta\up{C}} (s_H - \underline{s}_H),  \overline{C} \eta\up{D} (\overline{s}_H - s_H) \right \},
    \end{equation}
    where $Z\up{DH}$ is the value of the objective function over the decision horizon corresponding to $s_H$, and $Z\up{opt,DH}$ is the objective value achieved when solving the scheduling problem over the decision horizon only and considering that the final storage at the end of the decision horizon is a variable bounded between  $\underline{s}_H$ and $ \overline{s}_H$.
\end{proposition}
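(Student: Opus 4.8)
The plan is to split the optimality gap into a part incurred inside the decision horizon and a part incurred afterwards, and to bound each separately. Let $\mathbf{x}^*$ be an optimal solution of the infinite-horizon problem $\textbf{S}(\mathbb{N}^+,\textbf{C})$, and let $s_H^*$ be its state of energy at the end of the decision horizon. By the principle of optimality, the tail of $\mathbf{x}^*$ after period $H$ is itself optimal for the continuation problem starting from $s_H^*$; write $V(\sigma)$ for the optimal profit obtainable over $\{H+1,H+2,\dots\}$ when the state at the end of the decision horizon is $\sigma$. The solution that commits our decisions over $\mathcal{H}$ (ending at $s_H$, with decision-horizon profit $Z\up{DH}$) and then continues optimally achieves $Z\up{DH}+V(s_H)$, whereas the optimum achieves $Z\up{DH}_*+V(s_H^*)$, where $Z\up{DH}_*$ is the decision-horizon profit of $\mathbf{x}^*$. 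Hence
\begin{equation*}
\Delta Z = \left(Z\up{DH}_* - Z\up{DH}\right) + \left(V(s_H^*) - V(s_H)\right).
\end{equation*}
Because the undiscounted infinite-horizon value may itself be unbounded, I would treat $V(s_H^*)-V(s_H)$ throughout as the difference of profits along two explicitly constructed trajectories, so that only finite, telescoping quantities ever appear.

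For the first bracket I would invoke the intermediary result that any optimal infinite-horizon solution ends the decision horizon inside the reachable window, i.e.\ $\underline{s}_H \le s_H^* \le \overline{s}_H$. The restriction of $\mathbf{x}^*$ to $\mathcal{H}$ is then a feasible point of the decision-horizon-only problem in which the terminal state is free in $[\underline{s}_H,\overline{s}_H]$, so its profit $Z\up{DH}_*$ cannot exceed the optimal value $Z\up{opt,DH}$ of that problem. This yields $Z\up{DH}_* - Z\up{DH} \le Z\up{opt,DH} - Z\up{DH}$, the first term of the bound.

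The core of the argument is bounding $V(s_H^*)-V(s_H)$ by a coupling construction. Starting from $s_H$, I would build a feasible continuation $\tilde{\mathbf{x}}$ that mimics the charge/discharge decisions of the optimal tail of $\mathbf{x}^*$ and deviates only enough to reconcile the mismatch $\delta=|s_H^*-s_H|$; since $\tilde{\mathbf{x}}$ is only a feasible continuation from $s_H$, its value is a lower bound on $V(s_H)$. If $s_H^*>s_H$ we begin with a deficit $\delta$: we copy $\mathbf{x}^*$ but discharge $\delta$ less stored energy in aggregate, which stays below $\mathbf{x}^*$ (away from $\overline{S}$) while the reduced discharge is exactly what keeps $\tilde{\mathbf{x}}$ above $\underline{S}$, forgoing discharge revenue of at most $\overline{C}\,\eta\up{D}\delta$ because each stored unit sold yields at most $\overline{C}\eta\up{D}$. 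Symmetrically, if $s_H^*<s_H$ we begin with a surplus $\delta$ and charge $\delta$ less, staying above $\mathbf{x}^*$ (away from $\underline{S}$) while the reduced charge keeps $\tilde{\mathbf{x}}$ below $\overline{S}$, forgoing a charging gain of at most $-\underline{C}\,\frac{1}{\eta\up{C}}\delta$ (charging earns most at the most negative price $\underline{C}$ and needs $1/\eta\up{C}$ input per stored unit). In both cases leakage only shrinks the mismatch, so it can only help the bound; replacing $\delta$ by $\overline{s}_H-s_H$ or $s_H-\underline{s}_H$ via $s_H^*\in[\underline{s}_H,\overline{s}_H]$ and taking the larger possibility gives $V(s_H^*)-V(s_H)\le \max\{-\underline{C}\frac{1}{\eta\up{C}}(s_H-\underline{s}_H),\,\overline{C}\eta\up{D}(\overline{s}_H-s_H)\}$. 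Adding the two brackets reproduces \eqref{eq:subopt1}.

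The main obstacle is making the coupling rigorous: I must show that ``discharge (respectively charge) $\delta$ less'' can always be realized as a feasible schedule respecting the rate limits \eqref{eq:bounds_pc}--\eqref{eq:bounds_pd}, the capacity limits \eqref{eq:bounds_s}, and the non-simultaneity constraint \eqref{eq:cc}, and that the debt is eventually cleared (or decays through $\rho$) so that $\tilde{\mathbf{x}}$ coincides with the optimal tail thereafter and the profit difference telescopes to a finite quantity. Some care is also needed because the per-unit revenue or cost must be charged against the worst admissible future price in $[\underline{C},\overline{C}]$ period by period; I would verify that spreading the reconciliation over several periods never worsens the bound, since leakage makes early reconciliation weakly cheaper.
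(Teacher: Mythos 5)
Your proposal takes essentially the same route as the paper's proof: the identical decomposition of $\Delta Z$ into a decision-horizon term, bounded by $Z\up{opt,DH}$ via $s_H^* \in [\underline{s}_H,\overline{s}_H]$, plus a continuation term, bounded by the same two-case worst-price analysis yielding $-\underline{C}\frac{1}{\eta\up{C}}(s_H-\underline{s}_H)$ and $\overline{C}\eta\up{D}(\overline{s}_H-s_H)$. Your coupling construction is just a more explicit rendering of the paper's informal ``missed opportunity to charge/discharge the difference later at the worst admissible price'' step, so the two arguments coincide in substance.
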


Therefore, the lower the gap between $\overline{s}_H$ and $\underline{s}_H$, the tighter the upper bound on the suboptimality gap with respect to the infinite-horizon solution.

We can also choose $s_H$ so that this upper bound on suboptimality is minimized.

This bound can most likely be improved by including more precise information on future prices, such as probability distributions and leakage.
However, this is a complete topic in itself, and we only aim to provide a starting point to undertake such a complex study.

\subsection{Existence of forecast horizons} \label{sec:existence}

Prior to determining forecast horizons, it is useful to establish whether they exist or not. In fact, the non-existence of forecast horizons under certain conditions is well-known from the literature on infinite-horizon optimization problems \citep{Chand2002Forecast, Cheevaprawatdomrong2007Solution, Lortz2015Solvability}.
By way of an example, we demonstrate that the existence of a forecast horizon is not guaranteed in energy storage scheduling problems.

Consider a storage system with charging and discharging rates such that the storage system can be charged or discharged completely in one time period, with $\eta < 1$, and with an initial level such that $\underline{S}<S\up{init}<\overline{S}$.
The decision horizon is one time period.
The price sequence is such that $C_1 > 0$ and $\eta C_1 < C_t < C_1$, $\forall t > 1$.
The resulting states of energy from solving $\textbf{F}(\mathcal{T}, \textbf{C}, \underline{S}_T)$ and $\textbf{F}(\mathcal{T}, \textbf{C}, \overline{S}_T)$ are shown in Fig.~\ref{fig:existence}.

\begin{figure}[thb] 
    \centering
    {\resizebox{.6\linewidth}{!}{

\begin{tikzpicture}
    \begin{axis}[
        xlabel={Time period},
        ylabel={State of energy (kWh)},
        xmin=0, xmax=10,
        ymin=0, ymax=10,
        xtick={0,1,2,3,4,5,6,7,8,9,10},
        xticklabels={$0$,$1$,$2$,$3$,$4$,$5$,$6$,$7$,$\cdots$,$T-1$,$T$},
        ytick={0,5,10},
        yticklabels={$\underline{S}$,$S\up{init}$,$\overline{S}$},
        legend style={at={(1.05,0.5)}, anchor=west, draw=none, 
                      font=\small},
        ]
        \addplot[
            color=red, style=densely dashdotted,
            line width=1.5pt,
            ] coordinates {
            (0,5) (1,5) (9,5) (10,10)
        };
        \addplot[
            color=blue, style=densely dashed,
            line width=1.5pt,
            ] coordinates {
            (0,5) (1,0) (10,0)
        };

        \addlegendentry{$\textbf{F}(\mathcal{T}, \textbf{C}, \overline{S}_T)$}
        \addlegendentry{$\textbf{F}(\mathcal{T}, \textbf{C}, \underline{S}_T)$}
    \end{axis}
\end{tikzpicture}
    \caption{Example of non-existence of a forecast horizon} \label{fig:existence}
\end{figure}
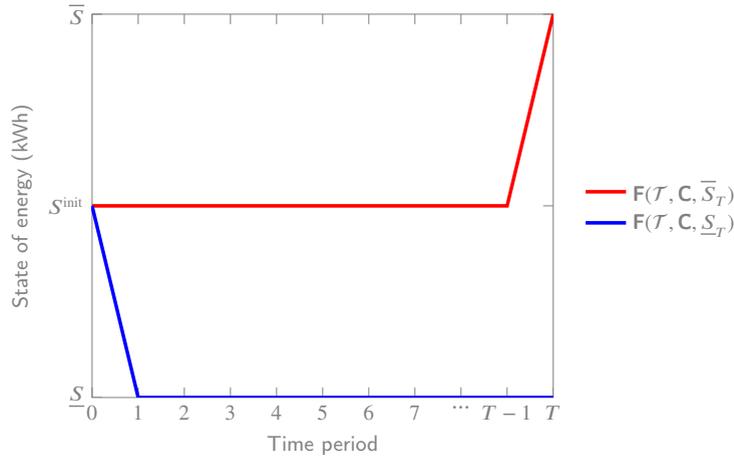

According to Theorem~\ref{theorem1}, to have a forecast horizon, the respective states of energy must be equal in the first time period.
For the red curve to match the blue curve in the first period, it needs to be profitable to discharge in the first period, at price $C_1$, and later charge at a lower price $C_t$.
However, we also have to account for inefficiencies: $\Delta s = s_{t}-s_{t-1} = \Delta t \left(\eta\up{C} p_t\up{C} - \frac{1}{\eta\up{D}} p_t\up{D}\right)$ (without leakage). So if $\Delta s \leq 0$, the corresponding discharge is $p_t\up{D} = \frac{- \Delta s \eta\up{D}}{\Delta t}$. And if $\Delta s \geq 0$, the corresponding charge is $p_t\up{C} = \frac{\Delta s}{\eta\up{C} \Delta t}$.
The change in the objective function $\Delta Z$ associated with the discharge of $\Delta s$ in the first period and the charge of $\Delta s$ at a later $t$ is $\Delta Z = C_1 \Delta s \eta\up{D} - C_t \frac{\Delta s}{\eta\up{C}}$, so it is only profitable to discharge in the first period if there is $t>1$ such that $C_t \leq \eta C_1$. 
\\
On the other hand, for the blue curve to match the red curve in the first time period, it needs to be profitable to discharge less in the first time period and discharge more later, which is the case if there is $t>1$ such that $C_t \geq C_1$. 
\\
Therefore, if $\eta C_1 < C_t < C_1$ for all $t > 1$, the two decisions in the first period will never match, which means that there exists no forecast horizon.

The analysis above suggests that the existence of a forecast horizon depends on the evolution of the price, also accounting for efficiency.
The lower the efficiency, the larger the variation of prices needed to make a charging-discharging action profitable.

The fact that the existence of forecast horizons is not guaranteed can limit the applicability of algorithms for discovering forecast horizons. Algorithms based on iteratively increasing the length of the planning horizon may fail to converge if there is no forecast horizon at all. This can be avoided by setting a maximum to the length of the planning horizon, as in \cite{Cruise2019Control}; however, there is no guarantee that a forecast horizon is found by the time the maximum is reached.

\subsection{Necessary condition for forecast horizons}

For the problem considered, it is not possible to calculate the minimum forecast horizon without solving the problem, as it highly depends on the value of the prices. However, we can obtain a necessary condition for forecast horizons prior to solving the problem, as given by Proposition~\ref{prop1}. The proof is available in Appendix~\ref{app:proof2}.

\begin{proposition}\label{prop1}
For a planning horizon $\mathcal{T}$, of length $T$, to be a forecast horizon, it must satisfy
\begin{align} \label{eq:cond2}
    \nonumber \min  & \left\{ \overline{S}  - \underline{S} - \left ( \sum_{t=0}^{T-H-1} \rho^t \right ) \left(\Delta t \, \eta\up{C} \overline{P}\up{C} + \frac{\Delta t}{\eta\up{D}} \overline{P}\up{D} \right), \right. \\
    \nonumber & \left. \rho^{T} S\up{init} - \underline{S} + \Delta t \, \eta\up{C}  \overline{P}\up{C} \left ( \sum_{t=T-H}^{T-1} \rho^t \right ) - \frac{\Delta t}{\eta\up{D}} \overline{P}\up{D} \left ( \sum_{t=0}^{T-H-1} \rho^t \right ), \, \right.\\
    & \left. \overline{S} - \rho^{T} S\up{init} -  \Delta t \, \eta\up{C}  \overline{P}\up{C} \left ( \sum_{t=0}^{T-H-1} \rho^t \right ) + \frac{\Delta t}{\eta\up{D}} \overline{P}\up{D} \left ( \sum_{t=T-H}^{T-1} \rho^t \right )  \right \} \leq 0 \, .
\end{align} 
\end{proposition}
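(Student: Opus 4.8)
The plan is to obtain \eqref{eq:cond2} as a consequence of the reachability restriction that Theorem~\ref{theorem1} imposes on any forecast horizon, and then to rewrite that restriction in closed form by distinguishing which of the capacity limits $\underline{S}$ and $\overline{S}$ are binding in \eqref{eq:sT_min}--\eqref{eq:sT_max}. By Theorem~\ref{theorem1}, if $\mathcal{T}$ is a forecast horizon there exist $\underline{\mathbf{x}}\in\underline{\mathcal{X}}$ and $\overline{\mathbf{x}}\in\overline{\mathcal{X}}$ with a common end-of-decision-horizon state $s_H:=\underline{s}_H=\overline{s}_H$. The trajectory $\overline{\mathbf{x}}$ reaches $\overline{S}_T$ from $s_H$ and $\underline{\mathbf{x}}$ reaches $\underline{S}_T$ from $s_H$, so both levels are attainable over the $T-H$ leaky periods following $\mathcal{H}$. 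Reusing the envelope reasoning behind \eqref{eq:sT_min}--\eqref{eq:sT_max}, but starting from $s_H$ over $T-H$ rather than $T$ periods, the state can move at most $+\Delta t\,\eta\up{C}\overline{P}\up{C}\sum_{t=0}^{T-H-1}\rho^t$ up and $-\frac{\Delta t}{\eta\up{D}}\overline{P}\up{D}\sum_{t=0}^{T-H-1}\rho^t$ down, which gives
\begin{equation*}
\overline{S}_T \le \rho^{T-H}s_H + \Delta t\,\eta\up{C}\overline{P}\up{C}\sum_{t=0}^{T-H-1}\rho^t, \qquad \underline{S}_T \ge \rho^{T-H}s_H - \frac{\Delta t}{\eta\up{D}}\overline{P}\up{D}\sum_{t=0}^{T-H-1}\rho^t .
\end{equation*}
Only the ``cannot charge or discharge faster than the rate limits'' direction is used, so intermediate feasibility plays no role; subtracting the two inequalities cancels $\rho^{T-H}s_H$ and yields the core condition
\begin{equation*}
\overline{S}_T - \underline{S}_T \le \left(\sum_{t=0}^{T-H-1}\rho^t\right)\!\left(\Delta t\,\eta\up{C}\overline{P}\up{C} + \frac{\Delta t}{\eta\up{D}}\overline{P}\up{D}\right).
\end{equation*}

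The remaining step is to identify the left side minus the right side with the minimum in \eqref{eq:cond2}. I would write $\underline{S}_T=\max\{\underline{S},\underline{S}_T^{\mathrm{u}}\}$ and $\overline{S}_T=\min\{\overline{S},\overline{S}_T^{\mathrm{u}}\}$, where $\underline{S}_T^{\mathrm{u}}$ and $\overline{S}_T^{\mathrm{u}}$ are the uncapped reachable levels (the second arguments of the $\max$ and $\min$ in \eqref{eq:sT_min} and \eqref{eq:sT_max}). A short computation using $\sum_{t=T-H}^{T-1}\rho^t=\sum_{t=0}^{T-1}\rho^t-\sum_{t=0}^{T-H-1}\rho^t$ shows that the three arguments of the minimum in \eqref{eq:cond2} are exactly $\overline{S}-\underline{S}$, $\overline{S}_T^{\mathrm{u}}-\underline{S}$, and $\overline{S}-\underline{S}_T^{\mathrm{u}}$, each reduced by the common bound $\left(\sum_{t=0}^{T-H-1}\rho^t\right)(\Delta t\,\eta\up{C}\overline{P}\up{C}+\tfrac{\Delta t}{\eta\up{D}}\overline{P}\up{D})$. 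Splitting into the four cases according to which caps are active, I would verify that whenever at least one of $\underline{S},\overline{S}$ binds, the smallest of these three arguments equals $\overline{S}_T-\underline{S}_T$, so the core condition reads precisely $\min\{\cdots\}\le 0$; and that in the one remaining case, where neither cap binds, each of the three arguments strictly exceeds $\overline{S}_T-\underline{S}_T=\big(\sum_{t=0}^{T-1}\rho^t-\sum_{t=0}^{T-H-1}\rho^t\big)(\Delta t\,\eta\up{C}\overline{P}\up{C}+\tfrac{\Delta t}{\eta\up{D}}\overline{P}\up{D})>0$, so this case cannot be a forecast horizon and may be discarded.

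The main obstacle I anticipate lies in this last bookkeeping: the minimum in \eqref{eq:cond2} ranges over only three of the four corner combinations of active limits, omitting the all-uncapped one, so I must argue that the omitted corner is exactly the regime in which no forecast horizon exists and is therefore harmless for a necessary condition. Establishing that the correct one of the three listed terms actually attains the minimum in each admissible case --- rather than merely bounding it from above --- is the part requiring the most care, but it reduces to fixing the signs of $\overline{S}-\overline{S}_T^{\mathrm{u}}$ and $\underline{S}_T^{\mathrm{u}}-\underline{S}$, each of which is pinned down by the case assumption.
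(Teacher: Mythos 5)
Your proof is correct, and its engine is the same as the paper's: Theorem \ref{theorem1} supplies a common state $s_H$, and both extreme terminal levels must be reachable from it over the last $T-H$ periods --- your two envelope inequalities are exactly the paper's \eqref{eq:proof6}--\eqref{eq:proof7}. Where you genuinely diverge is in the execution. The paper keeps $s_H$, supplements \eqref{eq:proof6}--\eqref{eq:proof7} with bounds on $s_H$ itself from the first $H$ periods (\eqref{eq:proof4}--\eqref{eq:proof5}), and runs a four-case proof by contradiction, in two cases first pinning down the exact value of $s_H$ (\eqref{eq:proof9}, \eqref{eq:proof11}). You instead eliminate $s_H$ immediately by subtraction, reducing everything to the single inequality $\overline{S}_T-\underline{S}_T\le B$ with $B=\left(\sum_{t=0}^{T-H-1}\rho^t\right)\left(\Delta t\,\eta\up{C}\overline{P}\up{C}+\frac{\Delta t}{\eta\up{D}}\overline{P}\up{D}\right)$, and then identify this with \eqref{eq:cond2} via $\min\{x_1,x_2\}-\max\{y_1,y_2\}=\min_{i,j}(x_i-y_j)$: three of the four cross-terms, shifted by $B$, are precisely the arguments of \eqref{eq:cond2}, and the fourth, $\left(\sum_{t=T-H}^{T-1}\rho^t\right)\left(\Delta t\,\eta\up{C}\overline{P}\up{C}+\frac{\Delta t}{\eta\up{D}}\overline{P}\up{D}\right)$, is always positive and thus harmless. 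This buys a direct proof that never needs the decision-horizon bounds on $s_H$, plus the slightly stronger fact that \eqref{eq:cond2} is \emph{equivalent} to $\overline{S}_T-\underline{S}_T\le B$; the paper's longer route only adds the explicit boundary values of $s_H$, which the proposition does not require. One slip to fix: in the all-uncapped case you write $\overline{S}_T-\underline{S}_T=\left(\sum_{t=0}^{T-1}\rho^t-\sum_{t=0}^{T-H-1}\rho^t\right)(\cdots)$, but that quantity is $\overline{S}_T-\underline{S}_T-B$; your conclusion for that case (all three arguments positive, no forecast horizon exists, so the case is vacuous for a necessary condition) remains correct.
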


If a planning horizon $\mathcal{T}$ does not satisfy \eqref{eq:cond2}, we can conclude that it is not a forecast horizon, without having to resort to Theorem~\ref{theorem1}.
Expression \eqref{eq:cond2} is similar to the one proposed in \cite{Ela2015Scheduling} that is used for determining the length of their planning horizon. In the case of storage system scheduling, \eqref{eq:cond2} is not sufficient to guarantee that the planning horizon is long enough, but can provide a starting point for an algorithm built to determine the minimum forecast horizon as presented in Sect.~\ref{sec:ex}. This starting point, referred to as $T\up{min}$, is the first $T \geq H$ such that \eqref{eq:cond2} is satisfied.

\subsection{Algorithm to determine the minimum forecast horizon} \label{sec:algo}

The results established above can be used to obtain the minimum forecast horizon of~\eqref{pb:opt}. We describe an algorithm to do so. 

\begin{algorithm}[hbt!]
\caption{Minimum forecast horizon}\label{alg}
\begin{algorithmic}[1]
\Require $T\up{max} \geq H$
\State Calculate $T\up{min}$ (Proposition~\ref{prop1})
\State $T \gets T\up{min}$
\State $gap \gets M$
\State $subopt \gets M$
\While{$T \leq T\up{max}$ \textbf{and} $gap > 0$}
    \State $\underline{s}_H \gets$ Solve $\textbf{F}(\mathcal{T}, \textbf{C}, \underline{S}_T)$
    \State $\overline{s}_H \gets$ Solve $\textbf{F}(\mathcal{T}, \textbf{C}, \overline{S}_T)$
    \State $gap \gets \overline{s}_H - \underline{s}_H$
    \If{$gap > 0$}
        \State $gap \gets$ Solve $min\_gap$
    \EndIf
    \State $T \gets T + 1$
\EndWhile
\If{$gap > 0$}
    \State Calculate $subopt$
\Else
    \State $subopt \gets 0$
\EndIf
\State \Return $T$, $subopt$
\end{algorithmic}
\end{algorithm}

The main idea of Algorithm~\ref{alg} is to iteratively increase the length of the planning horizon until the condition given in Theorem~\ref{theorem1} is satisfied, which is the case when $gap = 0$, where $gap$ is initialized to $M>0$. This approach is similar to those of \cite{Garcia2000Solving} and \cite{Cheevaprawatdomrong2004Infinite}, but adapted to the energy storage scheduling problem.
The final gap is used to provide an upper bound on suboptimality, denoted by $subopt$, following, for example, the approach in Sect.~\ref{sec:subopt}.
The starting point for the planning horizon, indicated by $T\up{min}$, corresponds to the lower bound obtained with Proposition~\ref{prop1}.

The limit $T\up{max}$ should be chosen such that there is enough data available to cover it and a solution can be obtained in finite time.
If $T\up{max}$ is reached, a strictly positive value of $subopt$ indicates that, if it exists, the minimum forecast horizon is greater than $T\up{max}$. If this happens, and if it is possible to obtain data further into the future, one can repeat the process using a larger value of $T\up{max}$ and starting from the previous $T\up{max}$.

Note that the algorithm also includes a test for solution multiplicity, denoted as $min\_gap$. Even if $\underline{s}_H \neq \overline{s}_H$, there might be other optimal solutions such that this condition is satisfied. 
One way to check this is to solve a third problem that includes both scheduling problems, for $S\up{end}=\underline{S}_T$ and for $S\up{end}=\overline{S}_T$. The objective function minimizes the difference between $\underline{s}_H$ and $\overline{s}_H$, and constraints enforce that the profit for each scheduling problem is equal to the optimal value previously obtained when the problems were solved separately. Thus, this problem will identify if there exist optimal solutions such that $\underline{s}_H=\overline{s}_H$.

To the best of our knowledge, this is the first algorithm capable of determining the minimum forecast horizon for energy storage scheduling problems. Knowing this minimum allows us to quantify the consequences of using a forecast horizon that is either too short or excessively long.
Importantly, the goal of analyzing the minimum forecast horizon is not to generate operational scheduling decisions, but to provide an a posteriori assessment of the forecasting requirements for a given problem. Consequently, computational efficiency is not the primary concern.
Yet, in the future, more efficient algorithms could be obtained, for example, by applying a bisection method or by using the solution of the previous iteration as a warm start.

\subsection{Validity of the results for more complex problems}

The results presented in this section are valid for the model~\eqref{pb:opt}, but can be extended to problems including one or more storage systems, provided that they satisfy the following conditions.
First, the complementarity constraints must be relaxed.
Second, the objective function must be of the form
\begin{equation}
    \sum_{i \in \mathcal{I}} \sum_{t \in \mathcal{T}} C_{i,t} (p_{i,t}\up{D}-p_{i,t}\up{C}) + f(\mathbf{y}).
\end{equation}
The set $\mathcal{I}$ corresponds to the different storage systems. Parameters $C_{i,t} \in \mathbb{R}$ correspond to coefficients in the objective function for the difference $p_{i,t}\up{D}-p_{i,t}\up{C}$. The rest of the objective function relates to the rest of the variables, where $\mathbf{y}$ gathers all the variables that are not related to storage systems and $f(\mathbf{y})$ represents the contribution of these other variables to the objective function and is such that the problem is linear.

Third, the constraints involving storage system variables for different storage systems and some of the other variables in $\mathbf{y}$ must be of the form
\begin{equation}
    \sum_{i \in \mathcal{I}} \sum_{t \in \mathcal{T}} A_{i,t} (p_{i,t}\up{D}-p_{i,t}\up{C}) + h(\mathbf{y}) \leq 0,
\end{equation}
where parameters $A_{i,t} \in \mathbb{R}$ correspond to coefficients for the difference $p_{i,t}\up{D}-p_{i,t}\up{C}$ in these constraints, and $h(\mathbf{y})$ represents the contribution of the other variables to these constraints and is such that the problem is linear.
An example is constraints that give the balance between total production and total consumption.

Under these conditions, it is possible to decompose the problem, with one sub-problem for each storage system, for example, using Lagrangian relaxation or Dantzig-Wolfe decomposition. These sub-problems are of the same form as~\eqref{pb:opt}, therefore the previous results apply to each sub-problem. 
We can then use the fact that a planning horizon is a forecast horizon for the full problem if and only if it is a forecast horizon for each sub-problem.
Moreover, if $T\up{min}_i$ represents the minimum forecast horizon for the sub-problem corresponding to storage system $i$, the minimum forecast horizon of the full problem, $T\up{min}$, is $T\up{min} = \max_{i \in \mathcal{I}} T\up{min}_i$.
\section{Application}\label{sec:ex}
We apply Algorithm~\ref{alg} to different storage systems to evaluate the minimum forecast horizon for various storage characteristics and compare the results from solving the problem with a forecast horizon to those obtained with common approaches. In all these examples, we consider a decision horizon of 24 hours. The code for the case studies is available at \url{https://github.com/eleaprat/stg-horizons}.

\subsection{Test case}
The data for four storage systems is given in Table~\ref{tab:data_stg}. 
The first two have short durations of charge and discharge, which correspond to the minimum time needed to completely fill or empty the storage system, calculated as $\frac{\overline{S}-\underline{S}}{\eta\up{C}\overline{P}\up{C}}$ for the duration of charge and $\eta\up{D}\frac{\overline{S}-\underline{S}}{\overline{P}\up{D}}$ for the duration of discharge.
The durations of charge and discharge are five times as long in the last two cases.
The difference between the first two is that the second has lower efficiencies (eff.), while the durations of charge and discharge are kept. This results in distinct values for the maximum rates of charge and discharge.
The last storage system allows us to evaluate the impact of leakage. 

\begin{table}[hbt]
    \caption{Data for the storage systems of the test cases}
    \centering
    \begin{tabular}{lccccccc}
    \hline
    & \multicolumn{1}{c}{\begin{tabular}[c]{@{}c@{}}$\overline{P}\up{C}$ and  $\overline{P}\up{D}$ \\ (kW)\end{tabular}} & \multicolumn{1}{c}{{\begin{tabular}[c]{@{}c@{}}$\underline{S}$ \\ (kWh)\end{tabular}}} & \multicolumn{1}{c}{{\begin{tabular}[c]{@{}c@{}}$\overline{S}$ \\ (kWh)\end{tabular}}} & \multicolumn{1}{c}{$\eta\up{C}$ and $\eta\up{D}$} & \multicolumn{1}{c}{\begin{tabular}[c]{@{}c@{}}Duration \\ of charge \\ (h)\end{tabular}} & \multicolumn{1}{c}{\begin{tabular}[c]{@{}c@{}}Duration \\ of discharge \\ (h)\end{tabular}} & \multicolumn{1}{c}{$\rho$} \\ \hline
    Fast storage & 1  & 0 & 10 & 0.9 & 12 & 9  & 1.0 \\
    {\begin{tabular}[l]{@{}l@{}}Fast storage \\ (low eff.)\end{tabular}} & 1.5, 0.7 & 0 & 10 & 0.6 & 12 & 9  & 1.0 \\
    Slow storage & 1 & 0 & 50 & 0.9 & 56 & 45 & 1.0 \\
    {\begin{tabular}[l]{@{}l@{}}Slow storage \\ (leakage)\end{tabular}}  & 1 & 0 & 50 & 0.9 & 56 & 45 & 0.99 \\
    \hline
    \end{tabular}
    \label{tab:data_stg}
\end{table}

We compare the schedules over three months. The initial level is half capacity, as well as the level at the end of the three months, also used as a final target for the rolling horizon with forecast horizon. This is only done for ease of comparison between the different approaches. In reality, we would keep applying the same procedure for an indefinite amount of time.
In all the studies, we set $T\up{max}$ to be equal to the maximum horizon available in the data, i.e., until the end of the three months.
The prices are from the day-ahead market for Denmark (DK1) for the first three months of 2024. They are plotted in Fig.~\ref{fig:prices}.

\begin{figure}[h]
    \centering
    \includegraphics[width=0.6\linewidth]{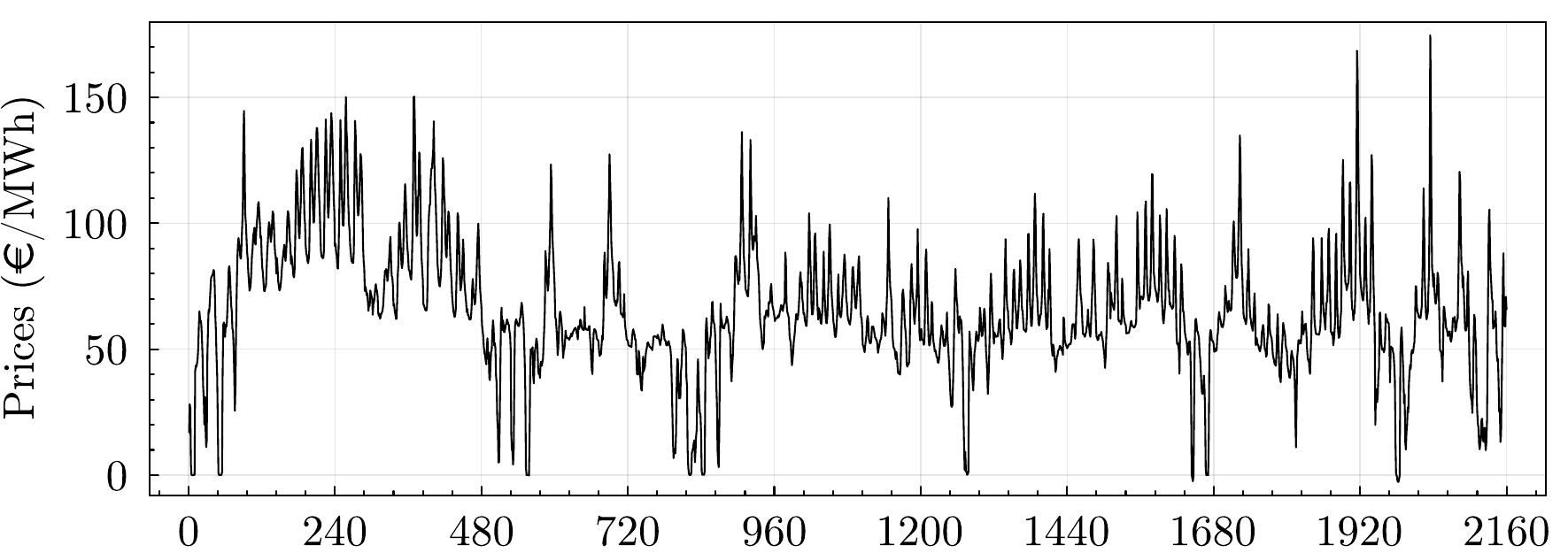}
    \caption{Prices used in the case study (day-ahead prices of DK1 between January and March 2024) plotted against time (hours)}
    \label{fig:prices}
\end{figure}

\subsection{Study of the minimum forecast horizon}

We study the minimum forecast horizon for the four case studies, following Algorithm~\ref{alg}. 
We obtain the plots in Fig.~\ref{fig:res_fcst}, which give the length of the minimum horizon in hours for each day of the three months considered and for each test system.
These plots also show the lower bound on the minimum forecast horizon obtained with Proposition~\ref{prop1} in black, and for comparison, the red line corresponds to 48 hours.

\begin{figure}[h]
    \centering
    \subfloat[Fast storage \label{fig:res_fcst_fast}]
    {\resizebox{0.49\linewidth}{!}{\includegraphics{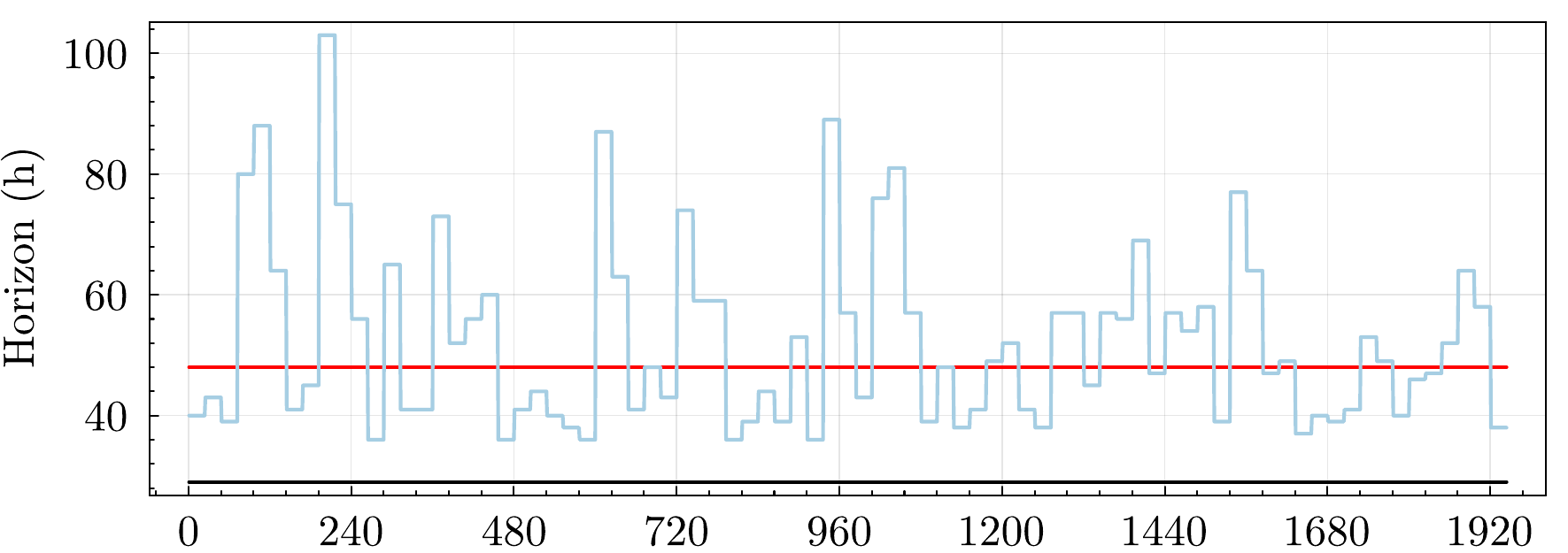}}}
    \subfloat[Fast storage (low eff.) \label{fig:res_fcst_eff}]
    {\resizebox{0.49\linewidth}{!}{\includegraphics{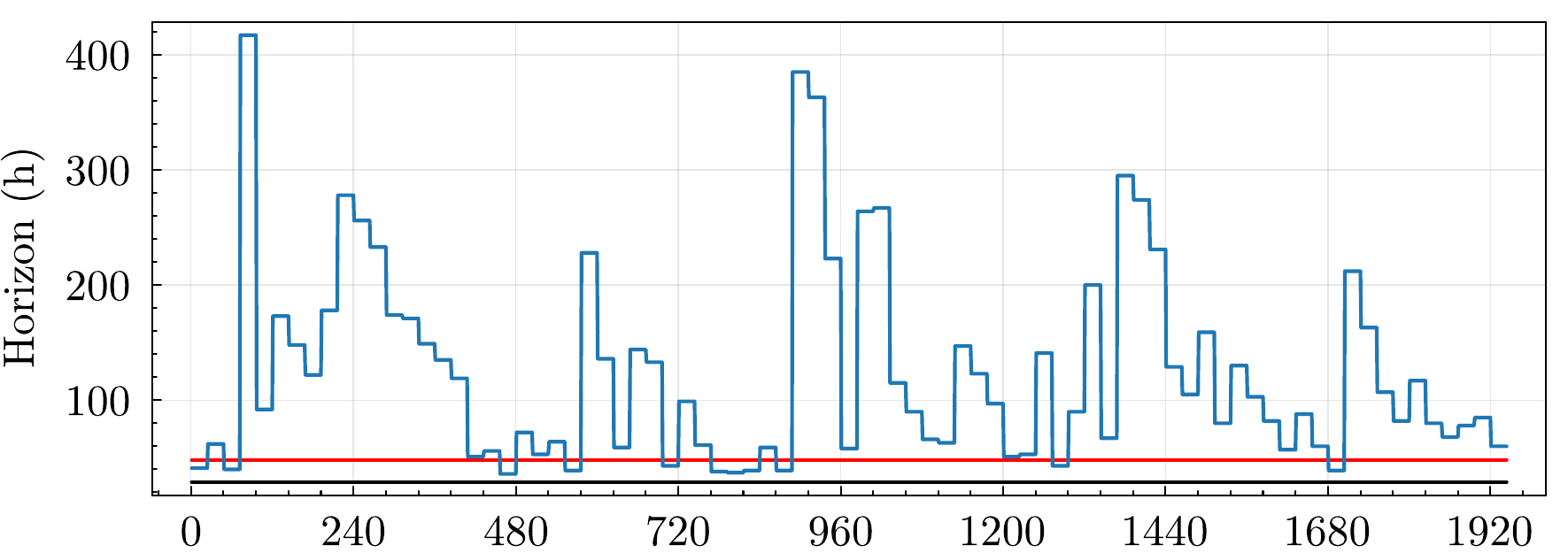}}}
    \\ \vspace{-0.4cm}
    \subfloat[Slow storage \label{fig:res_fcst_slow}]
    {\resizebox{0.49\linewidth}{!}{\includegraphics{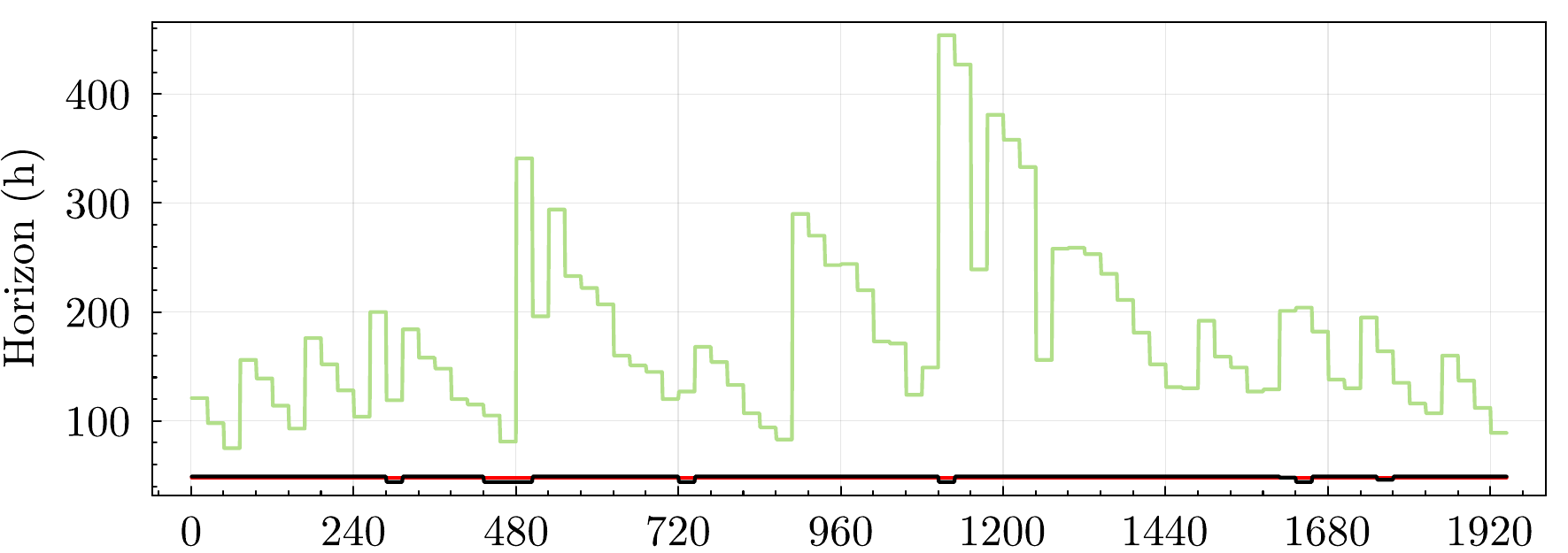}}}
    \subfloat[Slow storage (leakage) \label{fig:res_fcst_leak}]
    {\resizebox{0.49\linewidth}{!}{\includegraphics{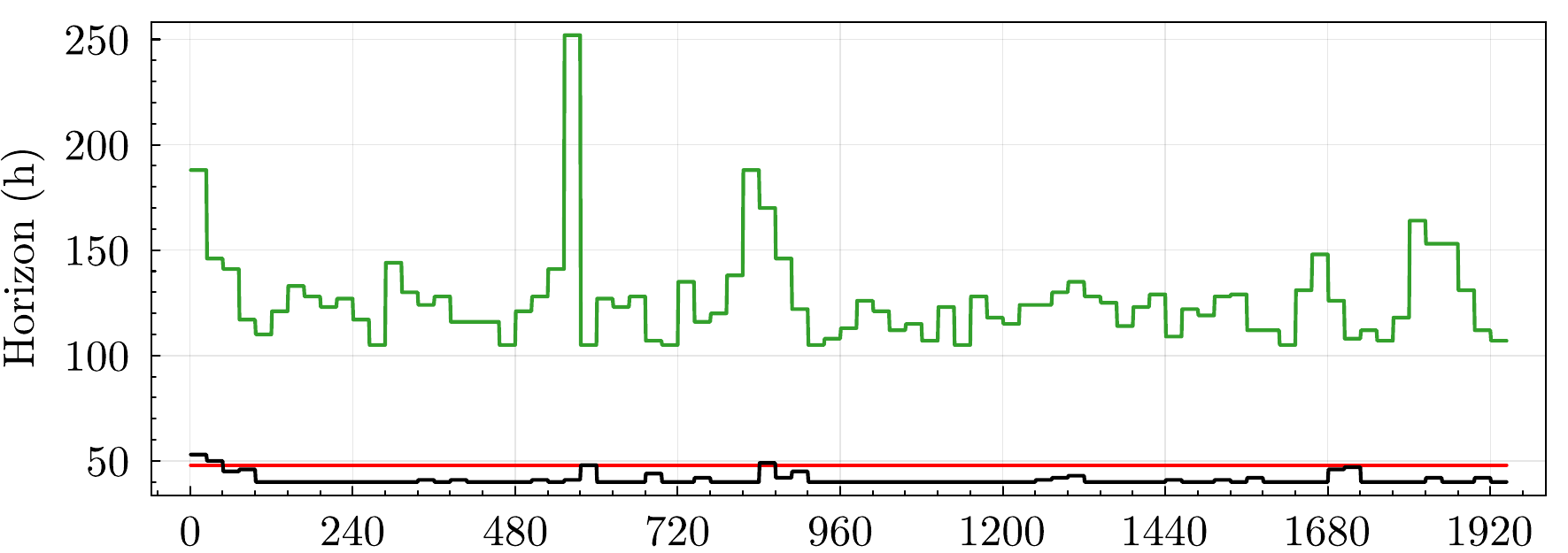}}}
    \caption{Minimum forecast horizon for the four storage systems, plotted against time (hours). The red line indicates 48 hours, and the black line indicates the lower bound on the minimum forecast horizon $T\up{min}$.}
    \label{fig:res_fcst}
\end{figure}

The minimum forecast horizon is almost always above 48 hours, except for the fast storage, for which it is often below. Overall, this casts doubt on why 48 hours is usually chosen as the length of the planning horizon. 
Also note that it varies significantly over time in all cases, which is an argument for not using fixed planning horizons.
For all cases, there is a large difference between the lower bound on the minimum forecast horizon $T\up{min}$ and the actual minimum forecast horizon. As $T\up{min}$ is calculated based on the technical characteristics of the storage system only, thus disregarding the prices and their evolution, this large difference emphasizes the importance prices have on the length of the minimum forecast horizon.
The results of Fig.~\ref{fig:res_fcst_fast} and Fig.~\ref{fig:res_fcst_eff} suggest that, for the same duration of charge and discharge, the efficiency of the storage has a significant impact on the length of the minimum forecast horizon, which is in line with the comments of Sect.~\ref{sec:existence}.
Comparing Fig.~\ref{fig:res_fcst_fast} and Fig.~\ref{fig:res_fcst_slow}, we see that the minimum forecast horizon increases for a storage system that takes longer to charge and discharge.
For the same charging and discharging duration, the minimum forecast horizon decreases when there is leakage, as we can observe by comparing Fig.~\ref{fig:res_fcst_slow} and Fig.~\ref{fig:res_fcst_leak}.

Note that at the end of the horizon, the length of the planning horizon decreases as we do not look further than the three months here and rather impose the final level at the end of these three months, as discussed above. Therefore, these plots stop before that.

In the case where these four storage systems were to be scheduled in the same problem under the same price, the plot in Fig.~\ref{fig:res_fcst_all} indicates which of the storage systems would determine the minimum forecast horizon for the complete problem.
The plotted curve corresponds to the maximum of the minimum forecast horizons of the four storage systems, and the background color shows which specific system sets this maximum.
We observe that the slow storage is often the one determining the minimum forecast horizon, but sometimes it is also the fast storage with low efficiency, or the slow storage with leakage. On the contrary, the fast storage never sets the length of the minimum forecast horizon in this case study, and thus, could have been simply disregarded from the analysis.

\begin{figure}[h]
    \centering
    \includegraphics[width=0.8\linewidth]{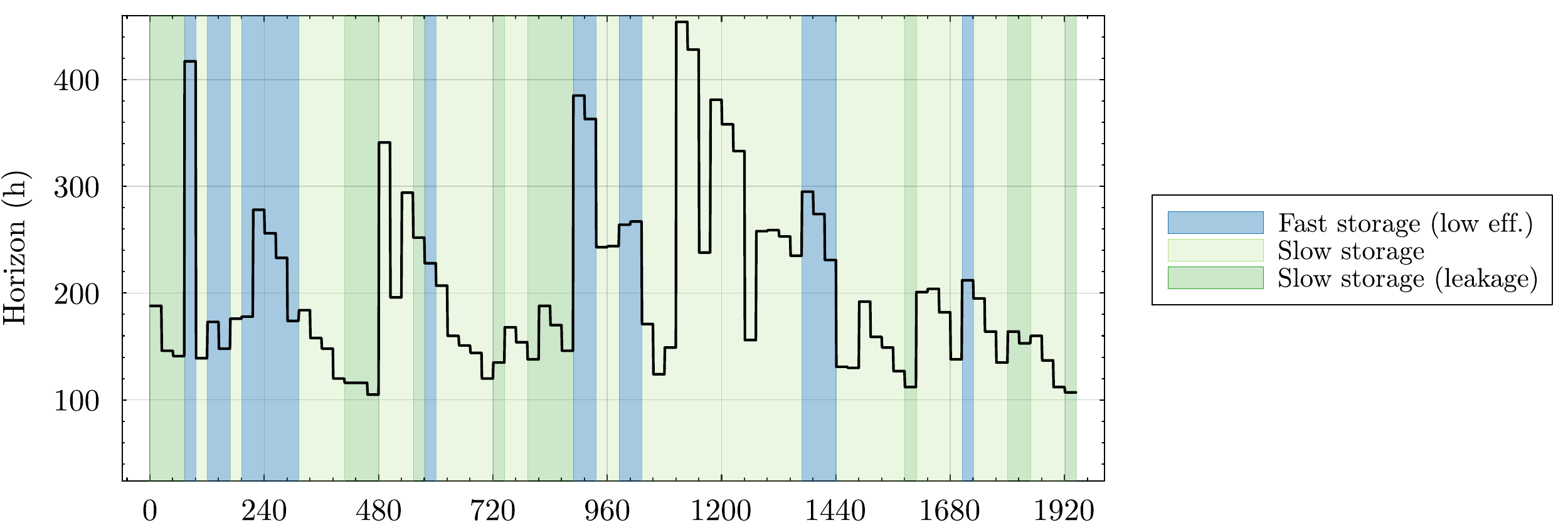}
    \caption{Minimum forecast horizon for the four storage systems, plotted against time (hours)}
    \label{fig:res_fcst_all}
\end{figure}

\subsection{Comparison with myopic approaches}

We now discuss the results from the optimization of the schedules of the storage systems, comparing the use of a rolling horizon with a forecast horizon (also referred to as ``Fcst. hor.'') with two other common approaches \cite{Weitzel2018Energy}.
One of these approaches, which we call ``Fixed level,'' is to solve for the next 24 hours only and set the final level to be equal to the initial one. The other is similar but uses an arbitrary rolling horizon of two days, i.e., a planning horizon of 48 hours with a decision horizon of 24 hours (``Hor. 2 days'').

We show the evolution of the state of energy over time in Fig.~\ref{fig:res_soe}. 
For the storage system with a fast charge and high efficiencies, there is not too much difference between the schedules obtained by the rolling-horizon method with a 2-day planning horizon and with a forecast horizon. The state of energy with a fixed level at the end of the day presents fewer variations and rarely discharges completely, as it always has to return to the same level.
For the storage system with a fast charge and low efficiencies, this behavior is more pronounced. In the results for a rolling horizon of two days, the storage system seldom discharges, more than for a fixed final level, but still not as much as with a forecast horizon.
For the slow storage systems, the approach with a forecast horizon is the only one for which the storage system reaches its bounds. In the other two cases, it does not happen as the horizon considered is not long enough for the state of energy to reach the bounds and still end up in the initial level.

\begin{figure}[htb]
    \centering
    \subfloat[Fast storage: Fixed level \label{fig:res_soe_fast_fix}]
    {\resizebox{0.33\linewidth}{!}{\includegraphics{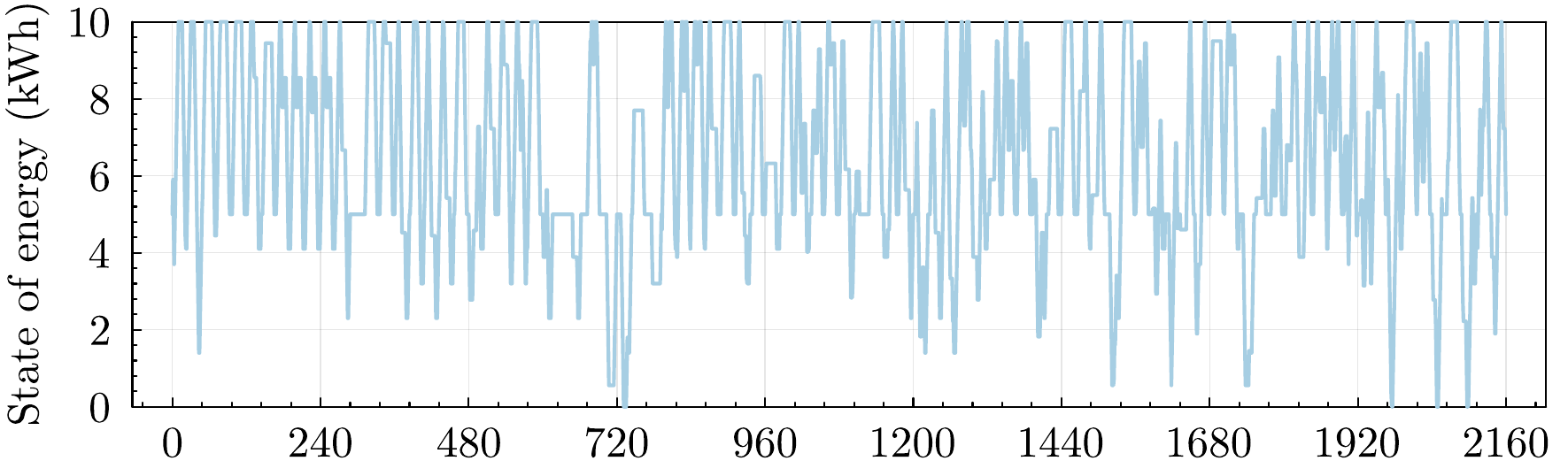}}}
    \hfill
    \subfloat[Fast storage: Hor. 2 days \label{fig:res_soe_fast_RH}]
    {\resizebox{0.33\linewidth}{!}{\includegraphics{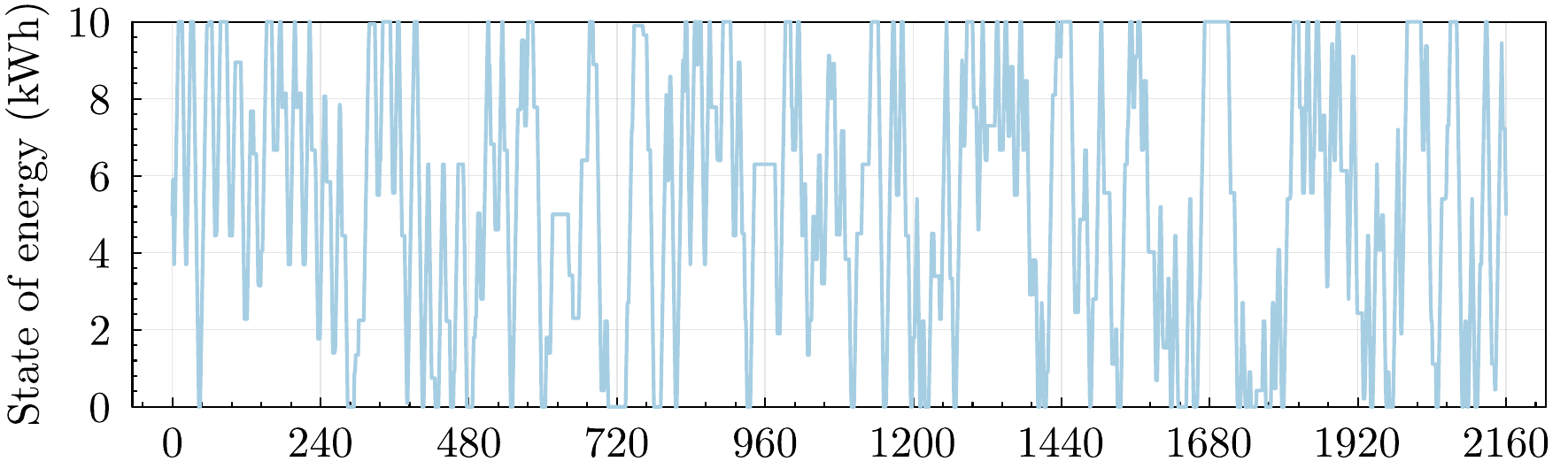}}}
    \hfill
    \subfloat[Fast storage: Fcst hor. \label{fig:res_soe_fast_fcst}]
    {\resizebox{0.33\linewidth}{!}{\includegraphics{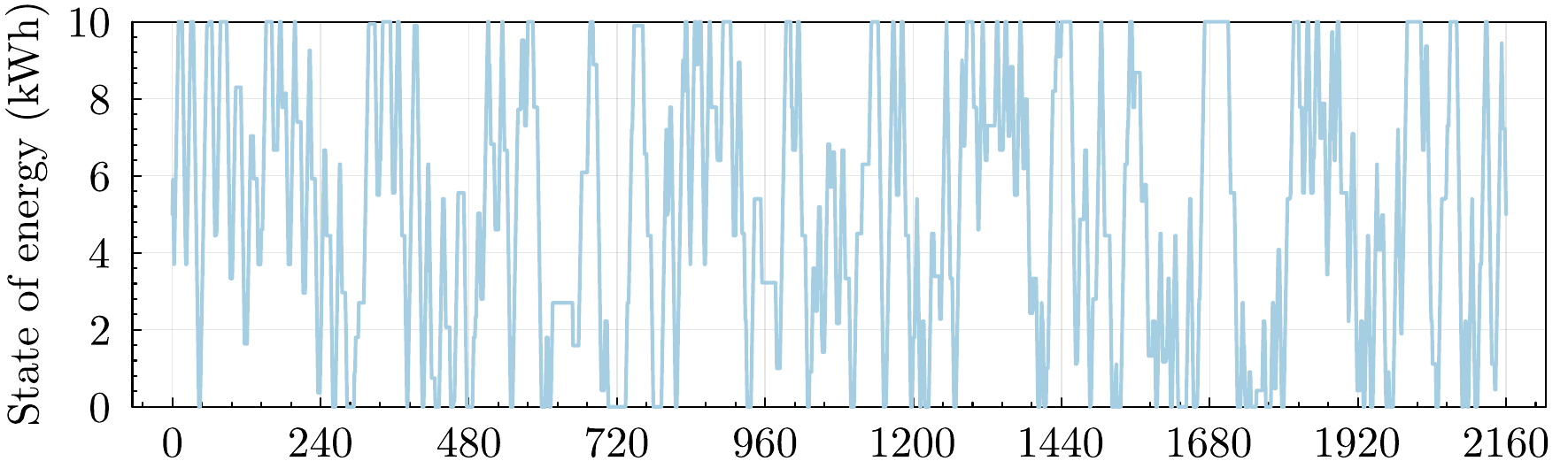}}}
    \\ \vspace{-0.4cm}
    \subfloat[Fast storage (low eff.): Fixed level \label{fig:res_soe_eff_fix}]
    {\resizebox{0.33\linewidth}{!}{\includegraphics{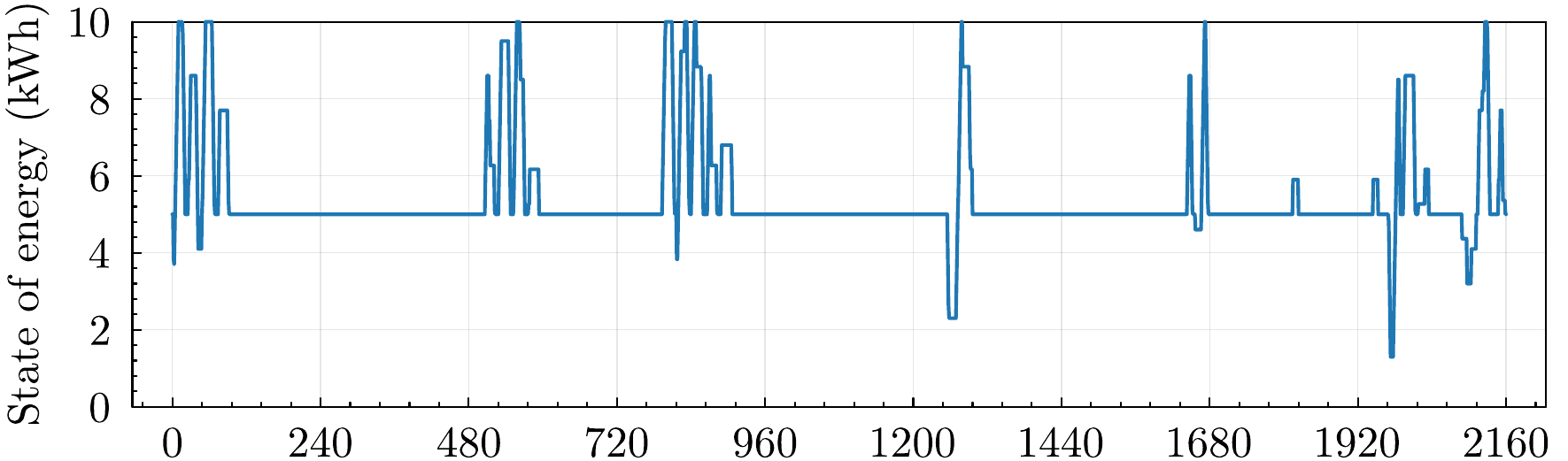}}}
    \hfill
    \subfloat[Fast storage (low eff.): Hor. 2 days \label{fig:res_soe_ieff_RH}]
    {\resizebox{0.33\linewidth}{!}{\includegraphics{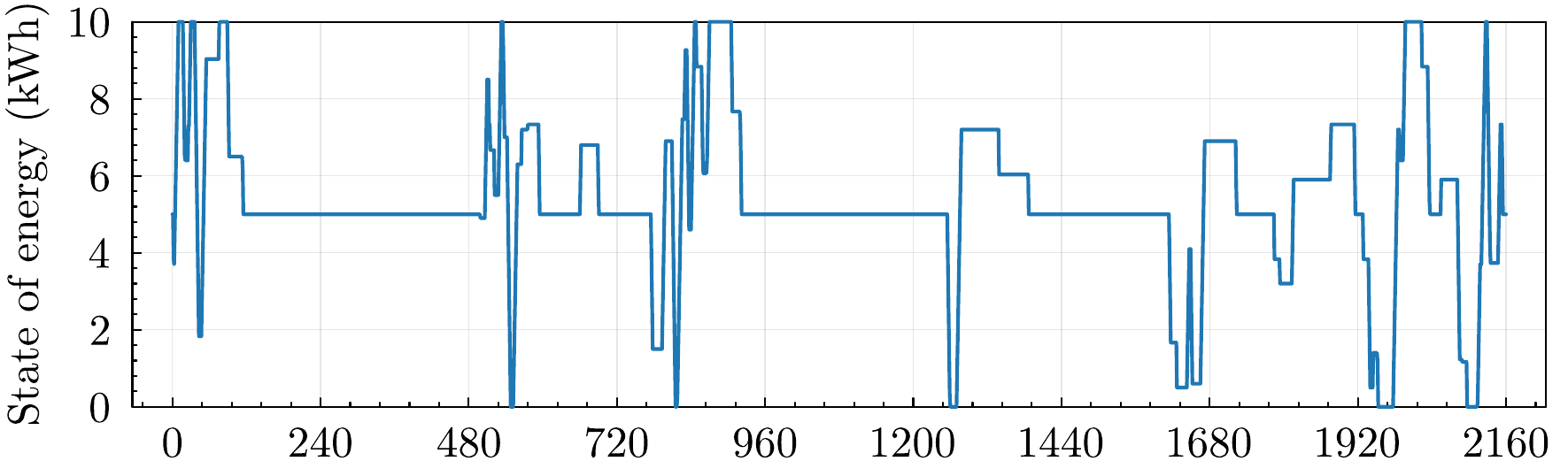}}}
    \hfill
    \subfloat[Fast storage (low eff.): Fcst. hor. \label{fig:res_soe_eff_fcst}]
    {\resizebox{0.33\linewidth}{!}{\includegraphics{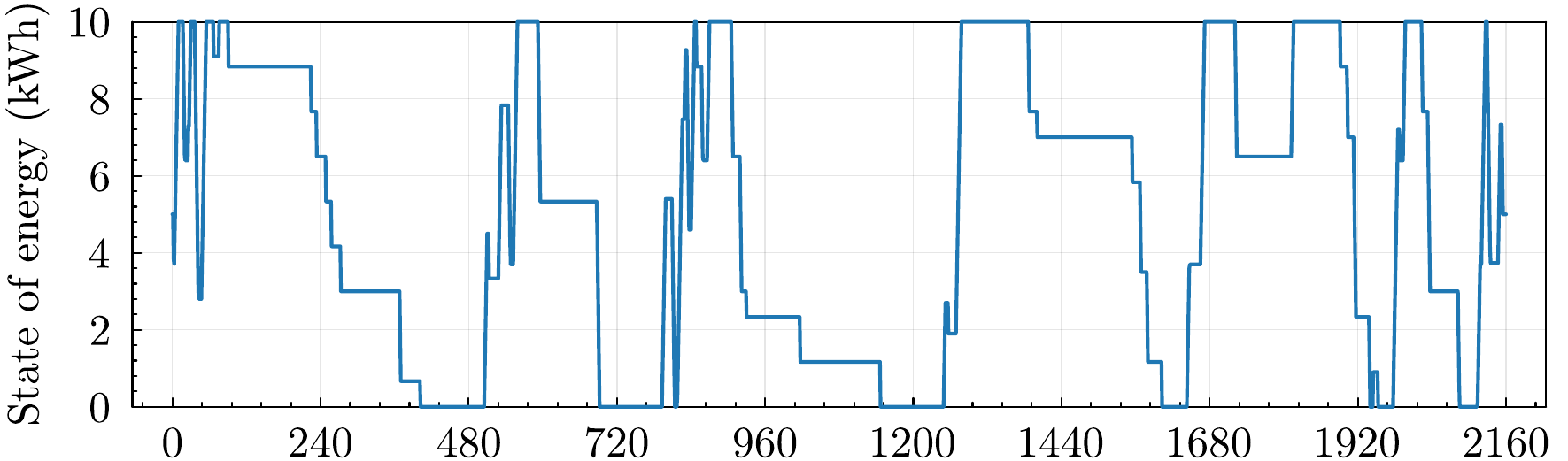}}}
    \\ \vspace{-0.4cm}
    \subfloat[Slow storage: Fixed level \label{fig:res_soe_slow_fix}]
    {\resizebox{0.33\linewidth}{!}{\includegraphics{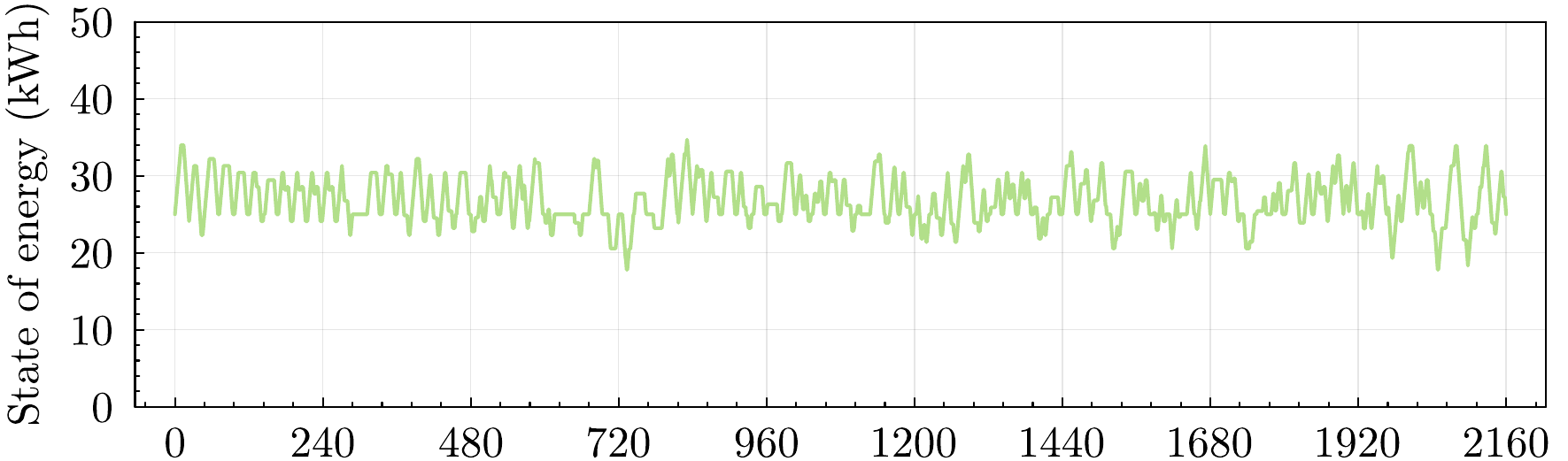}}}
    \hfill
    \subfloat[Slow storage: Hor. 2 days \label{fig:res_soe_slow_RH}]
    {\resizebox{0.33\linewidth}{!}{\includegraphics{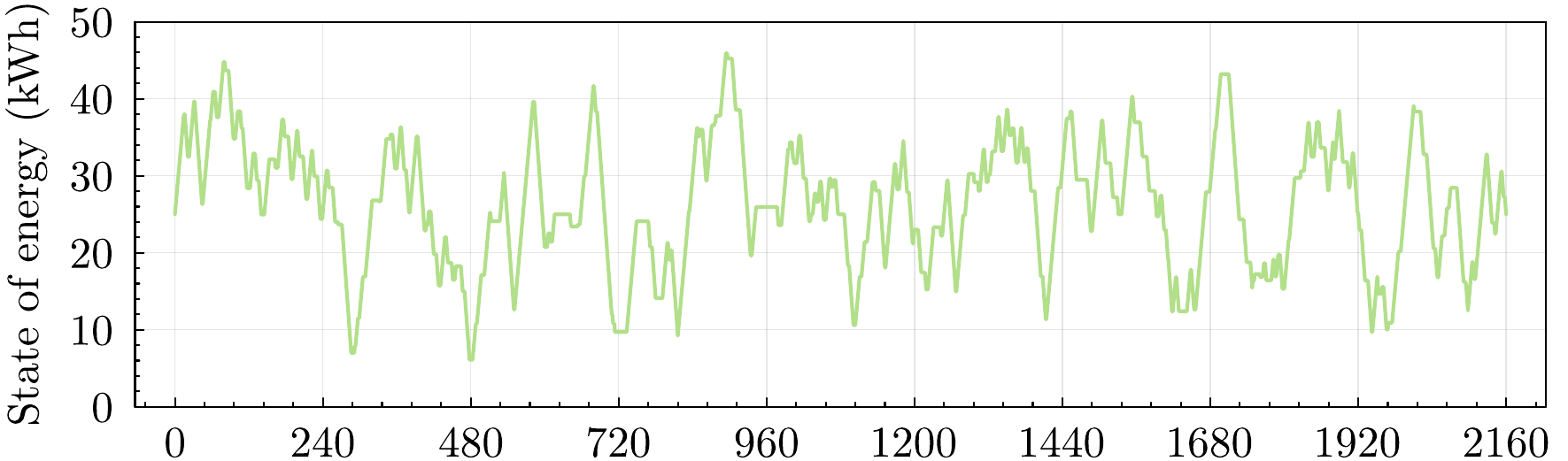}}}
    \hfill
    \subfloat[Slow storage: Fcst. hor. \label{fig:res_soe_slow_fcst}]
    {\resizebox{0.33\linewidth}{!}{\includegraphics{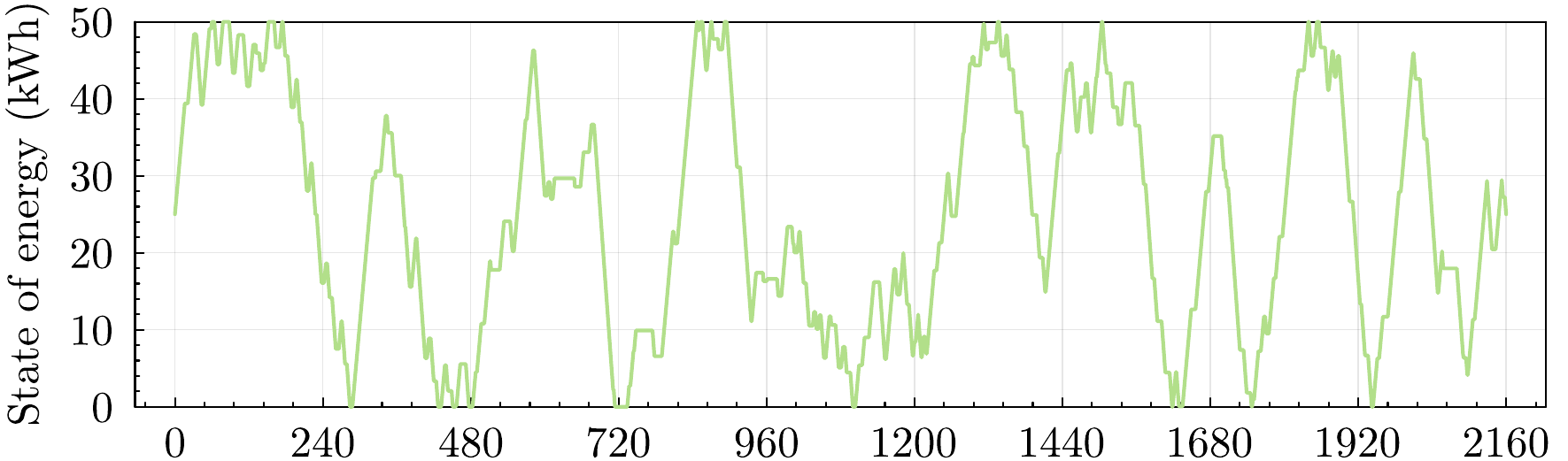}}}
    \\ \vspace{-0.4cm}
    \subfloat[Slow storage (leakage): Fixed level \label{fig:res_soe_leak_fix}]
    {\resizebox{0.33\linewidth}{!}{\includegraphics{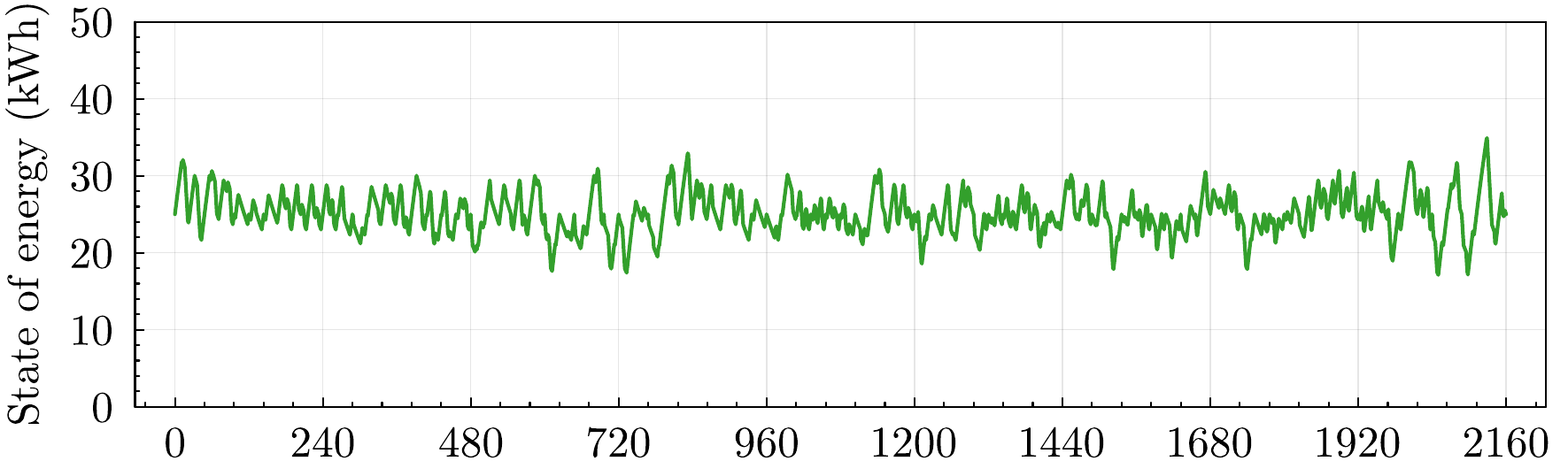}}}
    \hfill
    \subfloat[Slow storage (leakage): Hor. 2 days \label{fig:res_soe_leak_RH}]
    {\resizebox{0.33\linewidth}{!}{\includegraphics{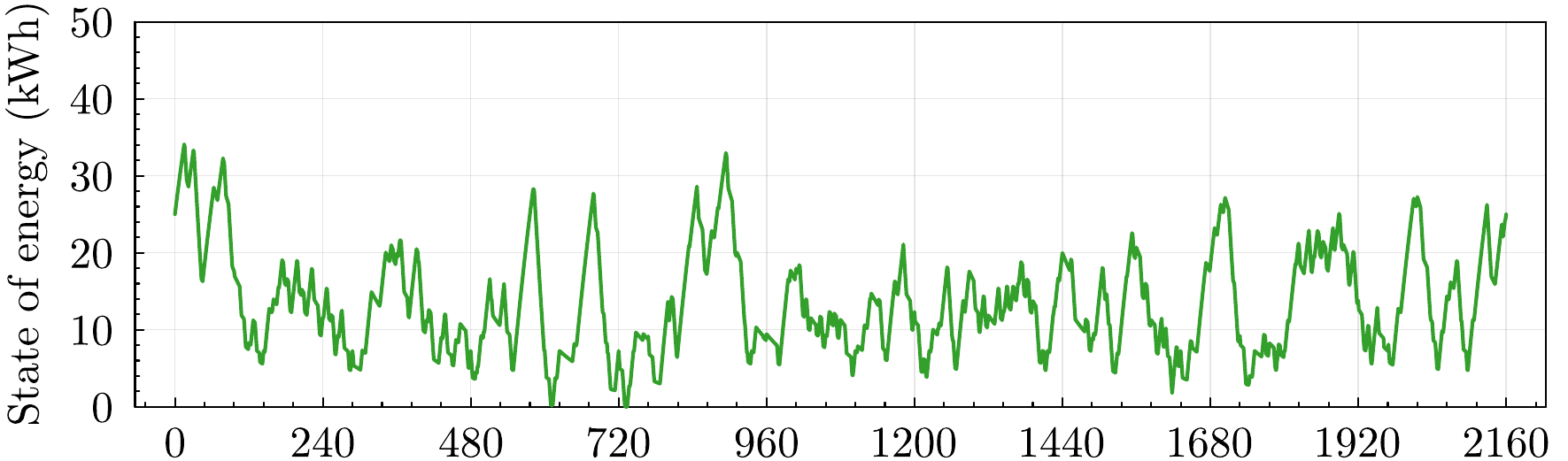}}}
    \hfill
    \subfloat[Slow storage (leakage): Fcst. hor.\label{fig:res_soe_leak_fcst}]
    {\resizebox{0.33\linewidth}{!}{\includegraphics{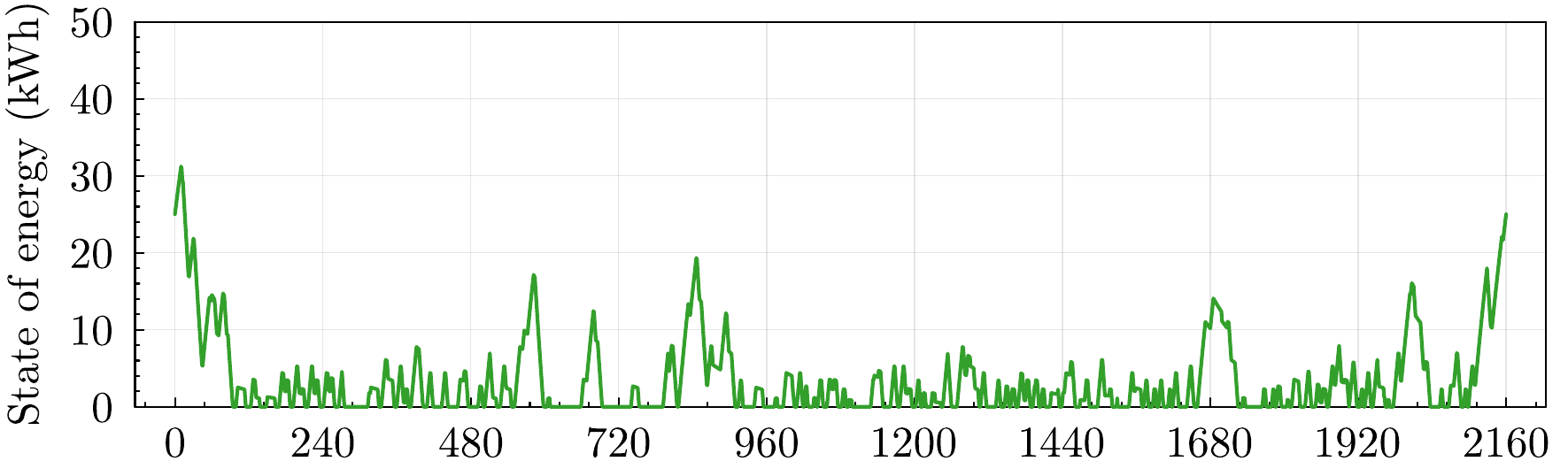}}}
    \caption{State of energy for the different test cases}
    \label{fig:res_soe}
\end{figure}

Other results are gathered in Table~\ref{tab:res_all}.
This table also shows the results for a planning horizon of 5 days (``Hor. 5 days''), which corresponds to the time necessary for the slow storage to complete a full charge and discharge.
For the profit, the numbers in parentheses represent the profit loss compared to the case with a forecast horizon. The advantage of using a forecast horizon is clear. It is less critical for faster storage systems, and in this case, it is the most critical for the slow storage system with leakage. This is due to the fact that the optimal strategy is to operate around the minimum level, while the other approaches push the operation closer to half of the capacity instead.
These results illustrate that using a planning horizon that is too short can even lead to a negative profit. Compared to the profit obtained when using a forecast horizon, the loss in profit can be as much as $362\%$ with a fixed level and $136\%$ with a horizon of two days.
In these examples, the solution obtained with a planning horizon of 5 days is much better; however, for the fast storage with low efficiencies and for the slow storage systems, it is still suboptimal.
Finally, it appears essential to properly account for charging and discharging efficiencies, as the very different results between the first two examples suggest.

\begin{table}[h]
    \centering
    \caption{Profit (€) for the different test cases}
    \begin{tabular}{lrrrr}
    \hline
    & \multicolumn{1}{c}{Fixed level} & \multicolumn{1}{c}{Hor. 2 days} & \multicolumn{1}{c}{Hor. 5 days} & \multicolumn{1}{c}{\begin{tabular}[c]{@{}c@{}}Fcst. hor. \\\end{tabular}} \\ \hline
    Fast storage              & 12.32 (17\%)   & 14.73 (0.3\%) & 14.78 (0\%) & \textbf{14.78} \\
    Fast storage (low eff.) & 2.49 (49\%)   & 3.86 (22\%) & 4.77 (3\%)  & \textbf{4.93} \\
    Slow storage & 13.26 (37\%)   & 18.24 (14\%) & 20.90 (1\%)  & \textbf{21.11}  \\
    Slow storage (leakage)    & -25.17 (362\%) & -3.49 (136\%) & 9.60 (0.01\%) & \textbf{9.61} \\ \hline
    \end{tabular}
    \label{tab:res_all}
\end{table}

\subsection{Evaluation of suboptimality}

We use \eqref{eq:subopt1} to compute an upper bound on the suboptimality gap for the slow storage with leakage.
As $\underline{C}$ and $\overline{C}$, we use the minimum and maximum prices imposed on the Nordpool day-ahead market, which are -500~€/MWh and 4000~€/MWh, respectively. 
We evaluate suboptimality for the first day and analyze how it evolves as we increase the planning horizon.
The storage level at the end of the decision horizon $s_H$ is chosen such that $\overline{s}_H \leq s_H \leq \underline{s}_H$, and such that the profit over the decision horizon is maximized, i.e., $Z\up{DH} = Z\up{opt,DH}$.
The plots in Fig.~\ref{fig:gap} and Fig.~\ref{fig:bound} respectively illustrate the evolution of the gap between $\overline{s}_H$ and $\underline{s}_H$, and the evolution of the upper bound on suboptimality, for an increasing planning horizon. Both follow the same trend.

\begin{figure}[h]
    \centering
    \subfloat[Gap between $\overline{s}_H$ and $\underline{s}_H$ \label{fig:gap}]
    {\resizebox{0.48\linewidth}{!}{\includegraphics{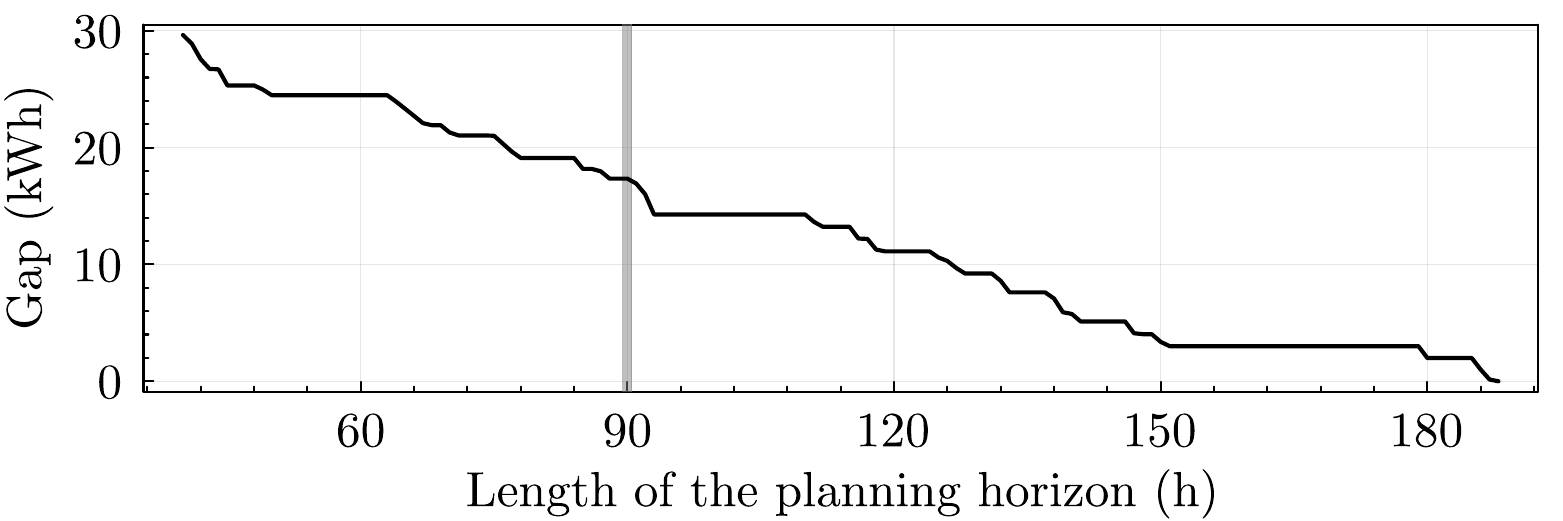}}}
    \hfill
    \subfloat[Upper bound on the suboptimality gap \label{fig:bound}]
    {\resizebox{0.48\linewidth}{!}{\includegraphics{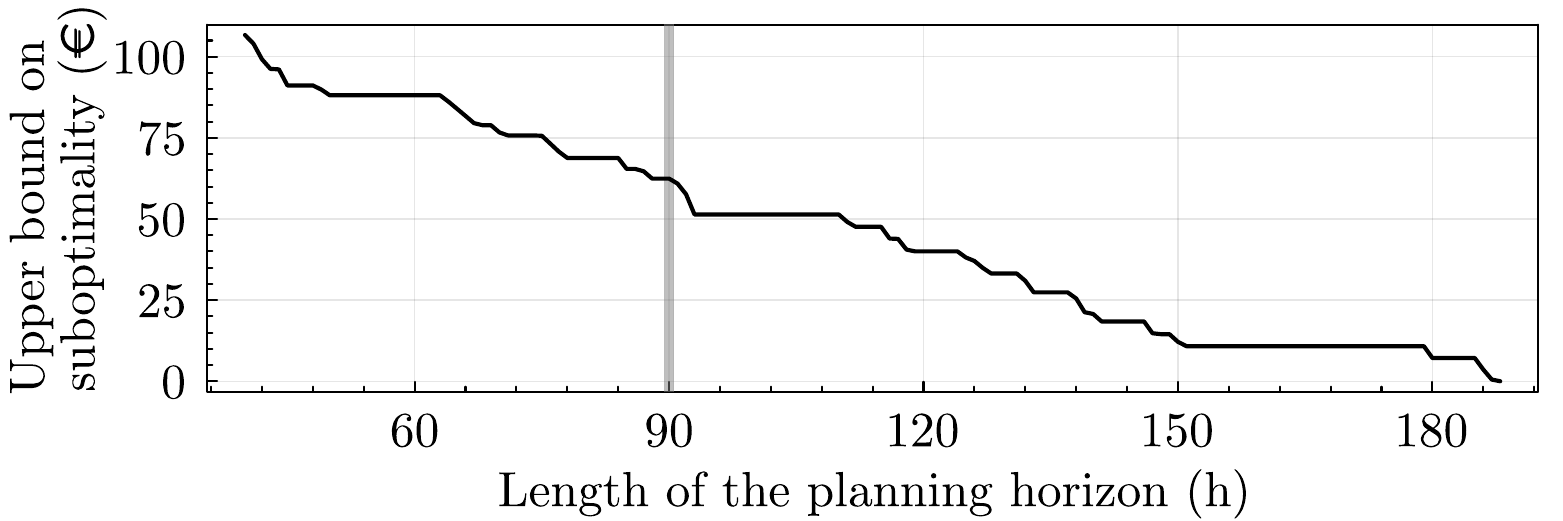}}}
    \caption{Suboptimality depending on the length of the planning horizon. The gray line highlights the results for a planning horizon of 90 hours.}
    \label{fig:res_subopt}
\end{figure}

To provide more insight, we further analyze the results obtained for a planning horizon of 90 hours, which are highlighted in Fig.~\ref{fig:res_subopt}. In this case, the upper bound on suboptimality is 60.90~\texteuro, or 2669\%.
The actual suboptimality is obtained by continuing to solve the scheduling problem over the following days. For each of these next days, a forecast horizon is used. We stop when the resulting state of energy at the end of a day for the problem with a suboptimal decision on the first day is equal to that of the infinite-horizon problem, which occurs after 3 more days. Comparing the sum of the objective values over each decision horizon, we obtain a suboptimality of 0.003~\texteuro, or 0.1\%, compared to the infinite horizon.
If instead of the Nordpool minimum and maximum prices, we use the minimum and maximum prices observed between 2019 and 2023 for that zone, which are -440.1 \texteuro/MWh and 871 \texteuro/MWh, the upper bound on the suboptimality gap is 13.26~\texteuro, or 581\%.
In both cases, we note that the bound is quite pessimistic. Indeed, not only does it assume that extreme prices are going to arise, but also that they will arise at a moment when the storage system is unable to react to them. This would be the case, for example, if the storage system were empty at the end of the day and the maximum possible price occurs in the first hour of the next day. If it were not the first hour, then the storage system could first charge and then discharge at the maximum price later. The upper bound should be refined to include these considerations.
\section{Conclusion}\label{sec:ccl}

In this paper, we consider the problem of determining the minimum forecast horizon for storage scheduling problems to be used in a rolling-horizon approach. We demonstrate that the existence of a forecast horizon is not guaranteed.
We formally introduce an easily verifiable condition to check if a chosen planning horizon is long enough. It involves solving two problems, where the final state of energy is set to its minimum and maximum possible value, respectively. If the state of energy at the end of the decision horizon is the same for these two problems, then the planning horizon is a forecast horizon. 
We propose an iterative algorithm to find the minimum forecast horizon. This procedure is initialized using a lower bound on the minimum forecast horizon that is theoretically computed using the features of the storage device and is valid for any price profile.

Numerical experiments show that several factors related to the storage system characteristics influence the length of the minimum forecast horizon. These include its charging and discharging efficiencies, the duration of charge and discharge, and the rate of self-discharge or leakage. Another key factor is how prices vary, which is not straightforward to characterize.
Comparing the use of a forecast horizon to common arbitrary approaches, we demonstrate the advantage of forecast horizons. For some storage characteristics, using a planning horizon much shorter than the minimum forecast horizon can even lead to a negative profit.

An extension of this work would be to obtain tighter bounds on suboptimality when the horizon is too short.
Furthermore, in this paper, we place ourselves in a context of perfect (and free) information. In reality, the quality of forecasts decreases as we look further into the future. Obtaining longer forecasts also comes with higher costs. It would be essential to integrate these parameters into the choice of an appropriate horizon.

\section*{Declarations}
\subsection*{Competing interests} The authors have no competing interests to declare that are relevant to the content of this article.

\begin{appendices}
\section{Proof of Theorem~\ref{theorem1}} \label{app:proof1}

In all our proofs, we use the following expression of the state of energy at any time period $t_2$ with the state of energy at any previous time period $t_1$, obtained by merging constraints \eqref{eq:update} between $t_1$ and $t_2$:
\begin{equation} \label{eq:s_calc}
    s_{t_2} = \rho^{t_2 - t_1} s_{t_1} + \Delta t\sum_{t=t_1+1}^{t_2} \rho^{t_2 - t} \left( \eta\up{C} p_t\up{C} - \frac{1}{\eta\up{D}} p_t\up{D} \right), \quad \forall (t_1,t_2) \in \mathcal{T}, t_1 \leq t_2, s_0 = S\up{init}.
\end{equation}

A first useful result is that the state of energy obtained when setting the final level to the maximum reachable level is an upper bound for an optimal solution to the problem with any final level. Similarly, the state of energy obtained when setting the final level to the minimum reachable level is a lower bound for an optimal solution to the problem with any final level.

\begin{lemma} \label{lemma1}
    The following is true
    \begin{enumerate}
        \item Given $\mathbf{\overline{x}} \in \overline{\mathcal{X}}$ and $S\up{end}$ , with $\underline{S}_T \leq S\up{end} \leq \overline{S}_T$, there exists $\mathbf{x}^* \in \mathcal{X}^*$ such that ${s}_t^* \leq \overline{s}_t$, $\forall t \in \mathcal{T}$.
        \item Given $\mathbf{\underline{x}} \in \underline{\mathcal{X}}$ and $S\up{end}$ , with $\underline{S}_T \leq S\up{end} \leq \overline{S}_T$, there exists $\mathbf{x}^* \in \mathcal{X}^*$ such that ${s}_t^* \geq \underline{s}_t$, $\forall t \in \mathcal{T}$.
    \end{enumerate}
\end{lemma}

\begin{proof}
    We consider any solution $\mathbf{\overline{x}} \in \overline{\mathcal{X}}$, and any final level $S\up{end}$, with $\underline{S}_T \leq S\up{end} \leq \overline{S}_T$.
    We suppose that for all solutions of $\textbf{F}(\mathcal{T}, \textbf{C}, S\up{end})$, there exists $t \in \mathcal{T}$ such that $s_{t}^* > \overline{s}_t$ and show a contradiction.
    
    Consider any of these solutions, $\mathbf{x}^* \in \mathcal{X}^*$.
    We identify $t_1 \in \mathcal{T}$, such that $s_{t}^* \leq \overline{s}_{t}$, $\forall t < t_1$ and $s_{t_1}^* > \overline{s}_{t_1}$.
    Since $s_T^* = S\up{end}$, $\overline{s}_T = \overline{S}_T$ and $S\up{end} \leq \overline{S}_T$, $\exists \, t_2 \in \mathcal{T}, \, t_2> t_1$ such that $s_{t}^* > \overline{s}_{t}$, $\forall t \in [t_1, t_2-1]$ and $s_{t_2}^* \leq \overline{s}_{t_2}$.

    We build solutions $\mathbf{x}'$ and $\overline{\mathbf{x}}'$, based on $\mathbf{x}^*$ and $\overline{\mathbf{x}}$ respectively, and derive some properties of the problem.

    We first build $\mathbf{x}'$ based on $\mathbf{x}^*$. We modify the solution at $t_1$, decreasing charge if $p\up{C*}_{t_1}>0$ or increasing discharge if $p\up{C*}_{t_1}=0$, which implies $p\up{D*}_{t_1}\geq0$. Therefore, the complementarity constraint at $t_1$ still holds.

    \textit{\textbf{If $p\up{C*}_{t_1}>0$:}} \\ 
    We decrease the charge at $t_1$ by a small quantity ${\epsilon} > 0$, with ${\epsilon} \leq {p}\up{C*}_{t_1}$, such that ${p}\up{C'}_{t_1}={p}\up{C*}_{t_1}-{\epsilon}$ and the bounds on charge are respected.
    We apply \eqref{eq:s_calc} between $t_1$ and $t \in [t_1,t_2-1]$ to obtain the modified state of energy at $t$:
    \begin{equation}
        {s}_t' = \rho^{t - t_1} s_{t_1}' + \Delta t \sum_{k=t_1+1}^{t} \rho^{t - k} \left( \eta\up{C} p_k\up{C'} - \frac{1}{\eta\up{D}} p_k\up{D'} \right) = {s}_t^* - \rho^{t - t_1} \Delta t \, \eta\up{C} {\epsilon}.
    \end{equation}
    We choose ${\epsilon}$ such that the lower bound on the state of energy is respected, i.e. ${\epsilon} \leq \frac{{s}_t^* - \underline{S}}{\rho^{t - t_1} \Delta t \, \eta\up{C}}$, $\forall t \in [t_1,t_2-1]$, which is possible since $\underline{S} \leq \overline{s}_t < s_t^*$, $ \forall t \in [t_1,t_2-1]$. 
    Since ${s}_t^* \leq \overline{S}$, the upper bound is still respected.

    We now modify the solution at $t_2$ to return on the same trajectory, decreasing discharge if $p\up{D*}_{t_2}>0$ or increasing charge if $p\up{D*}_{t_2}=0$ and $p\up{C*}_{t_2}\geq0$. Therefore, the complementarity constraint at $t_2$ still holds.

    \begin{itemize}
        \item \textit{If $p\up{D*}_{t_2}>0$}: We decrease discharge at $t_2$ by $\rho^{t_2 - t_1} \eta\up{C} \eta\up{D} \epsilon$, with ${\epsilon} \leq \frac{{p}\up{D*}_{t_2}}{\rho^{t_2 - t_1} \eta\up{C} \eta\up{D}} $, such that ${p}\up{D'}_{t_2}={p}\up{D*}_{t_2}-\rho^{t_2 - t_1} \eta\up{C} \eta\up{D} \epsilon$ and the bounds on discharge are respected. At $t_2$, the modified state of energy is
        \begin{equation}
            {s}_{t_2}' = \rho {s}_{t_2-1}^* - \rho^{t_2 - t_1} \Delta t \, \eta\up{C} {\epsilon} +\Delta t \, \eta\up{C} {p}\up{C*}_{t_2} - \Delta t \frac{1}{\eta\up{D}} {p}\up{D*}_{t_2} + \Delta t \frac{1}{\eta\up{D}} \rho^{t_2 - t_1} \eta\up{C} \eta\up{D} \epsilon = {s}_{t_2}^*.   
        \end{equation}
        We are back on the same trajectory, which completes the demonstration that the new solution is feasible.
        The change in the objective function compared to $\mathbf{x}^*$ is $\Delta t \, C_{t_1} \epsilon - \Delta t \, \rho^{t_2 - t_1} \eta\up{C} \eta\up{D} C_{t_2} \epsilon$. Since $\mathbf{x}'$ cannot be better than $\mathbf{x}^*$, we derive
        \begin{equation} \label{eq:star_pc_pd}
                C_{t_1} \leq \rho^{t_2 - t_1} \eta\up{C} \eta\up{D} C_{t_2}.
        \end{equation}
        \item \textit{If $p\up{D*}_{t_2}=0$ and $p\up{C*}_{t_2}\geq0$}: We increase charge at $t_2$ by $\rho^{t_2 - t_1} \epsilon$, with ${\epsilon} \leq \frac{\overline{P}\up{C} - p\up{C*}_{t_2}}{\rho^{t_2 - t_1}}$, such that ${p}\up{C'}_{t_2}={p}\up{C*}_{t_2}+\rho^{t_2 - t_1} \epsilon$ and the bounds on charge are respected.
        Note that ${\epsilon} >0$ is guaranteed by $p\up{C*}_{t_2} < \overline{P}\up{C}$. Indeed, since $\overline{s}_{t_2} < {s}_{t_2}^*$, $\overline{s}_{t_2} \geq {s}_{t_2}^*$ would not be possible otherwise.
        At $t_2$, the modified state of energy is
        \begin{equation}
            {s}_{t_2}' = \rho {s}_{t_2-1}^* - \rho^{t_2 - t_1} \Delta t \, \eta\up{C} {\epsilon} +\Delta t \, \eta\up{C} {p}\up{C*}_{t_2} +  \Delta t \, \eta\up{C} \rho^{t_2 - t_1} {\epsilon} - \Delta t \frac{1}{\eta\up{D}} {p}\up{D*}_{t_2} = {s}_{t_2}^*.
        \end{equation}
        We are back on the same trajectory, which completes the demonstration that the new solution is feasible.
        The change in the objective function compared to $\mathbf{x}^*$ is $\Delta t \, C_{t_1} \epsilon - \Delta t \, \rho^{t_2 - t_1} C_{t_2} \epsilon$. Since $\mathbf{x}'$ cannot be better than $\mathbf{x}^*$, we derive
        \begin{equation} \label{eq:star_pc_pc}
            C_{t_1} \leq \rho^{t_2 - t_1} C_{t_2}.
        \end{equation}
    \end{itemize}

   \textit{\textbf{If $p\up{C*}_{t_1}=0$ and $p\up{D*}_{t_1}\geq0$:}} \\ 
   We increase the discharge at $t_1$ by a small quantity ${\epsilon} > 0$, with ${\epsilon} \leq \overline{P}\up{D} - {p}\up{D*}_{t_1}$, such that ${p}\up{D'}_{t_1}={p}\up{D*}_{t_1}+{\epsilon}$ and the bounds on discharge are respected. Note that ${\epsilon} >0$ is guaranteed by $p\up{D*}_{t_1} < \overline{P}\up{D}$. Indeed, since ${s}_{t_1}^* < \overline{s}_{t_1}$, ${s}_{t_1}^* \geq \overline{s}_{t_1}^*$ would not be possible otherwise.
    We apply \eqref{eq:s_calc} between $t_1$ and $t \in [t_1,t_2-1]$ to obtain the modified state of energy at $t$:
    \begin{equation}
        {s}_t' = \rho^{t - t_1} s_{t_1}' + \Delta t \sum_{k=t_1+1}^{t} \rho^{t - k} \left( \eta\up{C} p_k\up{C'} - \frac{1}{\eta\up{D}} p_k\up{D'} \right) = {s}_t^* - \rho^{t - t_1} \Delta t \frac{1}{\eta\up{D}} {\epsilon}.   
    \end{equation}
    We choose ${\epsilon}$ such that the lower bound on the state of energy is respected, i.e. ${\epsilon} \leq  \frac{\eta\up{D}({s}_t^* - \underline{S})}{\rho^{t - t_1} \Delta t}$, $\forall t \in [t_1,t_2-1]$, which is possible since $\underline{S} \leq \overline{s}_t < s_t^*$, $ \forall t \in [t_1,t_2-1]$. 
    Since ${s}_t^* \leq \overline{S}$, the upper bound is still respected.

    We then modify the solution at $t_2$ in a similar way as before.

    \begin{itemize}
        \item \textit{If $p\up{D*}_{t_2}>0$}: We decrease discharge at $t_2$ by $\rho^{t_2 - t_1} \epsilon$, with ${\epsilon} \leq \frac{p\up{D*}_{t_2}}{\rho^{t_2 - t_1}}$, such that ${p}\up{D'}_{t_2}={p}\up{D*}_{t_2}-\rho^{t_2 - t_1}\epsilon$ and the bounds on discharge are respected.
        At $t_2$, the modified state of energy is
        \begin{equation}
            {s}_{t_2}' = \rho {s}_{t_2-1}^* - \rho^{t_2 - t_1} \Delta t \frac{1}{\eta\up{D}} {\epsilon} +\Delta t \, \eta\up{C} {p}\up{C*}_{t_2} - \Delta t \frac{1}{\eta\up{D}} {p}\up{D*}_{t_2} + \Delta t \frac{1}{\eta\up{D}} \rho^{t_2 - t_1} \epsilon = {s}_{t_2}^*.  
        \end{equation}
        We are back on the same trajectory, which completes the demonstration that the new solution is feasible.
        The change in the objective function compared to $\mathbf{x}^*$ is $ \Delta t \, C_{t_1} \epsilon - \Delta t \, \rho^{t_2 - t_1} C_{t_2} \epsilon$. Since $\mathbf{x}'$ cannot be better than $\mathbf{x}^*$, we derive
        \begin{equation} \label{eq:star_pd_pd}
            C_{t_1} \leq \rho^{t_2 - t_1} C_{t_2}.
        \end{equation}
        \item \textit{If $p\up{D*}_{t_2}=0$ and $p\up{C*}_{t_2}\geq 0$}: We increase charge at $t_2$ by $\frac{ \rho^{t_2 - t_1} \epsilon}{\eta\up{C} \eta\up{D}}$, with ${\epsilon} \leq \frac{\eta\up{C} \eta\up{D} (\overline{P}\up{C} - p\up{C*}_{t_2})}{\rho^{t_2 - t_1}}$, such that ${p}\up{C'}_{t_2}={p}\up{C*}_{t_2}+\frac{\rho^{t_2 - t_1} \epsilon}{\eta\up{C} \eta\up{D}}$. The bounds on discharge are respected.
        At $t_2$, the modified state of energy is
        \begin{equation}
            {s}_{t_2}' = \rho {s}_{t_2-1}^* - \rho^{t_2 - t_1} \Delta t \frac{1}{\eta\up{D}} {\epsilon} +\Delta t \, \eta\up{C} {p}\up{C*}_{t_2} +\Delta t \, \eta\up{C} \frac{\rho^{t_2 - t_1} \epsilon}{\eta\up{C} \eta\up{D}} - \Delta t \frac{1}{\eta\up{D}} {p}\up{D*}_{t_2} ={s}_{t_2}^*.
        \end{equation}
        We are back on the same trajectory, which completes the demonstration that the new solution is feasible.
        The change in the objective function compared to $\mathbf{x}^*$ is $ \Delta t \, C_{t_1} \epsilon - \Delta t \frac{\rho^{t_2 - t_1}}{\eta\up{C} \eta\up{D}} C_{t_2} \epsilon $. Since $\mathbf{x}'$ cannot be better than $\mathbf{x}^*$, we derive
        \begin{equation} \label{eq:star_pd_pc}
            \eta\up{C} \eta\up{D} C_{t_1} \leq \rho^{t_2 - t_1} C_{t_2}.
        \end{equation}
    \end{itemize}

    Next, we build $\overline{\mathbf{x}}'$ based on $\overline{\mathbf{x}}$. We modify the solution at $t_1$, decreasing discharge if $\overline{p}\up{D}_{t_1}>0$ or increasing charge if $\overline{p}\up{D}_{t_1}=0$, which implies $\overline{p}\up{C}_{t_1}\geq0$. Therefore, the complementarity constraint at $t_1$ still holds.

    \textit{\textbf{If $\overline{p}\up{D}_{t_1}>0$:}} \\ 
    We decrease the discharge at $t_1$ by a small quantity $\overline{\epsilon} > 0$, with ${\epsilon} \leq \overline{p}\up{D}_{t_1}$, such that $\overline{p}\up{D'}_{t_1} = \overline{p}\up{D}_{t_1}-\overline{\epsilon}$ and the bounds on discharge are respected. 
    We apply \eqref{eq:s_calc} between $t_1$ and $t \in [t_1,t_2-1]$ to obtain the modified state of energy at $t$:
    \begin{equation}
        \overline{s}_t' = \rho^{t - t_1} \overline{s}_{t_1}' + \Delta t \sum_{k=t_1+1}^{t} \rho^{t - k} \left( \eta\up{C} \overline{p}_k\up{C'} - \frac{1}{\eta\up{D}} \overline{p}_k\up{D'} \right)  = \overline{s}_t + \rho^{t - t_1} \Delta t \frac{1}{\eta\up{D}} {\epsilon}.  
    \end{equation}
    We choose $\overline{\epsilon}$ such that the upper bound on the state of energy is respected, i.e. $\overline{\epsilon} \leq  \frac{\eta\up{D} (\overline{S} - \overline{s}_t)}{\rho^{t - t_1} \Delta t}$, $\forall t \in [t_1,t_2-1]$, which we know is possible since $\overline{s}_t < s_t^* \leq \overline{S}$, $ \forall t \in [t_1,t_2-1]$. 
    Since $\overline{s}_t \geq \underline{S}$, the lower bound is still respected.

    We then modify the solution at $t_2$ to return on the same trajectory, decreasing charge if $\overline{p}\up{C}_{t_2}>0$ or increasing discharge if $\overline{p}\up{C}_{t_2}=0$ and $\overline{p}\up{D}_{t_2}\geq0$. Therefore, the complementarity constraint at $t_2$ still holds.

    \begin{itemize}
        \item \textit{If $\overline{p}\up{C}_{t_2}>0$:} We decrease charge at $t_2$ by $\frac{\rho^{t_2 - t_1} \overline{\epsilon}}{\eta\up{C} \eta\up{D}}$, with $\overline{\epsilon} \leq \frac{\eta\up{C} \eta\up{D} \overline{p}\up{C}_{t_2}}{\rho^{t_2 - t_1}}$, such that $\overline{p}\up{C'}_{t_2}=\overline{p}\up{C}_{t_2} - \frac{\rho^{t_2 - t_1} \overline{\epsilon}}{\eta\up{C} \eta\up{D}}$ and the bounds on charge are respected.
        At $t_2$, the modified state of energy is
        \begin{equation}
            \overline{s}_{t_2}' = \rho \overline{s}_{t_2-1} + \rho^{t_2 - t_1} \Delta t \frac{1}{\eta\up{D}} \overline{\epsilon} + \Delta t \, \eta\up{C} \overline{p}\up{C}_{t_2} - \Delta t \, \eta\up{C} \frac{\rho^{t_2 - t_1} \overline{\epsilon}}{\eta\up{C} \eta\up{D}} - \Delta t \frac{1}{\eta\up{D}} \overline{p}\up{D}_{t_2} = \overline{s}_{t_2}.
        \end{equation}
        We are back on the same trajectory, which completes the demonstration that the new solution is feasible. The change in the objective function compared to $\mathbf{x}^*$ is $- \Delta t \, C_{t_1} \overline{\epsilon} + \Delta t \frac{\rho^{t_2 - t_1}}{\eta\up{C} \eta\up{D}} C_{t_2}  \overline{\epsilon}$. Since $\mathbf{x}'$ cannot be better than $\mathbf{x}^*$, we derive
        \begin{equation} \label{eq:overline_pd_pc}
            \eta\up{C} \eta\up{D} C_{t_1} \geq \rho^{t_2 - t_1} C_{t_2}.
        \end{equation}
        \item \textit{If $\overline{p}\up{C}_{t_2}=0$ and $\overline{p}\up{D}_{t_2}\geq 0$:} We increase discharge at $t_2$ by $\rho^{t_2 - t_1} \overline{\epsilon}$, with $\overline{\epsilon} \leq \frac{\overline{P}\up{D} - \overline{p}\up{D}_{t_2}}{\rho^{t_2 - t_1}}$, such that $\overline{p}\up{D'}_{t_2}=\overline{p}\up{D}_{t_2}+\rho^{t_2 - t_1}\overline{\epsilon}$. The bounds on discharge are respected. Note that $\overline{\epsilon} >0$ is guaranteed by $\overline{p}\up{D}_{t_2} < \overline{P}\up{D}$. Indeed, since $\overline{s}_{t_2} < {s}_{t_2}^*$, $\overline{s}_{t_2} \geq {s}_{t_2}^*$ would not be possible otherwise.
        At $t_2$, the state of energy is
        \begin{equation}
            \overline{s}_{t_2}' = \rho \overline{s}_{t_2-1} + \rho^{t_2 - t_1} \Delta t \frac{1}{\eta\up{D}} \overline{\epsilon} + \Delta t \, \eta\up{C} \overline{p}\up{C}_{t_2} - \Delta t \frac{1}{\eta\up{D}} \overline{p}\up{D}_{t_2} -  \Delta t \frac{1}{\eta\up{D}} \rho^{t_2 - t_1}\overline{\epsilon} =\overline{s}_{t_2}.  
        \end{equation}
        We are back on the same trajectory, which completes the demonstration that the new solution is feasible. The change in the objective function compared to $\mathbf{x}^*$ is $- \Delta t \, C_{t_1} \overline{\epsilon} + \Delta t \, \rho^{t_2 - t_1} C_{t_2} \overline{\epsilon}$. Since $\mathbf{x}'$ cannot be better than $\mathbf{x}^*$, we derive
        \begin{equation} \label{eq:overline_pd_pd}
            C_{t_1} \geq \rho^{t_2 - t_1} C_{t_2}.
        \end{equation}
    \end{itemize}

    \textit{\textbf{If $\overline{p}\up{D}_{t_1}=0$ and $\overline{p}\up{C}_{t_1}\geq0$:}} \\ 
    We increase the charge at $t_1$ by a small quantity $\overline{\epsilon} > 0$, with $\overline{\epsilon} \leq \overline{P}\up{C} - \overline{p}\up{C}_{t_1}$, such that $\overline{p}\up{C'}_{t_1}=\overline{p}\up{C}_{t_1}+\overline{\epsilon}$ and the bounds on charge are respected. Note that $\overline{\epsilon} >0$ is guaranteed by $\overline{p}\up{C}_{t_1} < \overline{P}\up{C}$. Indeed, since ${s}_{t_1}^* < \overline{s}_{t_1}$, ${s}_{t_1}^* \geq \overline{s}_{t_1}^*$ would not be possible otherwise.

    We apply \eqref{eq:s_calc} between $t_1$ and $t \in [t_1,t_2-1]$ to obtain the modified state of energy at $t$:
    \begin{equation}
        \overline{s}_t' = \rho^{t - t_1} \overline{s}_{t_1}' + \Delta t \sum_{k=t_1+1}^{t} \rho^{t - k} \left( \eta\up{C} \overline{p}_k\up{C'} - \frac{1}{\eta\up{D}} \overline{p}_k\up{D'} \right)  = \overline{s}_t + \rho^{t - t_1} \Delta t \, {\eta\up{C}} \overline{\epsilon}.
    \end{equation}
    We choose $\overline{\epsilon}$ such that the upper bound on the state of energy is respected, i.e. $\overline{\epsilon} \leq \frac{\overline{S} -\overline{s}_t}{\rho^{t - t_1} \Delta t \, {\eta\up{C}}}$, $\forall t \in [t_1,t_2-1]$, which is possible since $\overline{s}_t < s_t^* \leq \overline{S}$, $ \forall t \in [t_1,t_2-1]$. 
    Since $\overline{s}_t \geq \underline{S}$, the lower bound is still respected.

    We then modify the solution at $t_2$ in a similar way as before.
    \begin{itemize}
        \item \textit{If $\overline{p}\up{C}_{t_2}>0$: } We decrease charge at $t_2$ by $\rho^{t_2 - t_1} \overline{\epsilon}$, with $\overline{\epsilon} \leq \frac{\overline{p}\up{C}_{t_2}}{\rho^{t_2 - t_1}}$, such that $\overline{p}\up{C'}_{t_2}=\overline{p}\up{C}_{t_2}-\rho^{t_2 - t_1}\overline{\epsilon}$ and the bounds on charge are respected.
        At $t_2$, the state of energy is
        \begin{equation}
            \overline{s}_{t_2}' = \rho \overline{s}_{t_2-1} + \rho^{t_2 - t_1} \Delta t \, {\eta\up{C}} \overline{\epsilon} + \Delta t \, \eta\up{C} \overline{p}\up{C}_{t_2} - \Delta t \, \eta\up{C} \rho^{t_2 - t_1}\overline{\epsilon} - \Delta t \frac{1}{\eta\up{D}} \overline{p}\up{D}_{t_2} = \overline{s}_{t_2}.  
        \end{equation}
       We are back on the same trajectory, which completes the demonstration that the new solution is feasible. The change in the objective function compared to $\mathbf{x}^*$ is $- \Delta t \, C_{t_1} \overline{\epsilon} + \Delta t \, \rho^{t_2 - t_1} C_{t_2} \overline{\epsilon}$. Since $\mathbf{x}'$ cannot be better than $\mathbf{x}^*$, we derive
       \begin{equation} \label{eq:overline_pc_pc}
            C_{t_1} \geq \rho^{t_2 - t_1} C_{t_2}.
        \end{equation}
        \item \textit{If $\overline{p}\up{C}_{t_2}=0$ and $\overline{p}\up{D}_{t_2}\geq0$:} We increase discharge at $t_2$ by $\rho^{t_2 - t_1} {\eta\up{C} \eta\up{D}} \overline{\epsilon}$, with $\overline{\epsilon} \leq \frac{\overline{P}\up{D} - \overline{p}\up{D}_{t_2}}{\rho^{t_2 - t_1} \eta\up{C} \eta\up{D}}$, such that $\overline{p}\up{D'}_{t_2}=\overline{p}\up{D}_{t_2}+\rho^{t_2 - t_1} {\eta\up{C} \eta\up{D}}\overline{\epsilon}$. The bounds on discharge are respected.
        At $t_2$, the modified state of energy is 
        \begin{equation}
            \overline{s}_{t_2}' = \rho \overline{s}_{t_2-1} + \rho^{t_2 - t_1} \Delta t \, {\eta\up{C}} \overline{\epsilon} + \Delta t \, \eta\up{C} \overline{p}\up{C}_{t_2} - \Delta t \frac{1}{\eta\up{D}} \overline{p}\up{D}_{t_2} - \Delta t \frac{1}{\eta\up{D}} \rho^{t_2 - t_1} {\eta\up{C} \eta\up{D}} \overline{\epsilon} = \overline{s}_{t_2}.
        \end{equation}
        We are back on the same trajectory, which completes the demonstration that the new solution is feasible. The change in the objective function compared to $\mathbf{x}^*$ is $- \Delta t \, C_{t_1} \overline{\epsilon} + \Delta t \,\rho^{t_2 - t_1} C_{t_2} {\eta\up{C} \eta\up{D}} \overline{\epsilon}$. Since $\mathbf{x}'$ cannot be better than $\mathbf{x}^*$, we derive
        \begin{equation} \label{eq:overline_pc_pd}
            C_{t_1} \geq \rho^{t_2 - t_1} \eta\up{C} \eta\up{D} C_{t_2}.
        \end{equation}
    \end{itemize}
    
    To ensure that $s_{t_1-1}^* \leq \overline{s}_{t_1-1}$, $s_{t_1}^* > \overline{s}_{t_1}$, $s_{t_2-1}^* > \overline{s}_{t_2-1}$, and $s_{t_2}^* \leq \overline{s}_{t_2}$, the possible combinations of charge and discharge at $t_1$ are limited to the following:
    
    \begin{itemize}
        \item At $t_1$, $p_{t_1}\up{C*}>0$ and $\overline{p}_{t_1}\up{D}=0$. At $t_2$, $p_{t_2}\up{D*}=0$ and $\overline{p}_{t_2}\up{C}>0$.
        Then, with \eqref{eq:star_pc_pc} and \eqref{eq:overline_pc_pc}, we get
        \begin{equation} \label{eq:pc_pc_pc_pc}
            C_{t_1} = \rho^{t_2 - t_1} C_{t_2}.
        \end{equation}
        \item At $t_1$, $p_{t_1}\up{C*}>0$ and $\overline{p}_{t_1}\up{D}=0$. At $t_2$, $p_{t_2}\up{D*}>0$ and $\overline{p}_{t_2}\up{C}>0$.
        Then, with \eqref{eq:star_pc_pd} and \eqref{eq:overline_pc_pc}, we get
        \begin{equation} \label{eq:pc_pd_pc_pc}
            C_{t_1} = \rho^{t_2 - t_1} C_{t_2},
        \end{equation}
        and $\eta\up{C} \eta\up{D} = 1$ and/or $C_{t_1} = C_{t_2} = 0$.
        \item At $t_1$, $p_{t_1}\up{C*}>0$ and $\overline{p}_{t_1}\up{D}=0$. At $t_2$, $p_{t_2}\up{D*}>0$ and $\overline{p}_{t_2}\up{C}=0$.
        Then, with \eqref{eq:star_pc_pd} and \eqref{eq:overline_pc_pd}, we get
        \begin{equation} \label{eq:pc_pd_pc_pd}
            C_{t_1} = \rho^{t_2 - t_1} \eta\up{C} \eta\up{D} C_{t_2}.
        \end{equation}
        \item At $t_1$, $p_{t_1}\up{C*}>0$ and $\overline{p}_{t_1}\up{D}>0$. At $t_2$, $p_{t_2}\up{D*}=0$ and $\overline{p}_{t_2}\up{C}>0$.
        Then, with \eqref{eq:star_pc_pc} and \eqref{eq:overline_pd_pc}, we get
        \begin{equation} \label{eq:pc_pc_pd_pc}
            C_{t_1} = \rho^{t_2 - t_1} C_{t_2},
        \end{equation}
        and $\eta\up{C} \eta\up{D} = 1$ and/or $C_{t_1} = C_{t_2} = 0$.
        \item At $t_1$, $p_{t_1}\up{C*}>0$ and $\overline{p}_{t_1}\up{D}>0$. At $t_2$, $p_{t_2}\up{D*}>0$ and $\overline{p}_{t_2}\up{C}>0$.
        Then, with \eqref{eq:star_pc_pd} and \eqref{eq:overline_pd_pc}, we get
        \begin{equation} \label{eq:pc_pd_pd_pc}
            C_{t_1} =  \rho^{t_2 - t_1} C_{t_2},
        \end{equation}
        and $\eta\up{C} \eta\up{D} = 1$ and/or $C_{t_1} = C_{t_2} = 0$.
        \item At $t_1$, $p_{t_1}\up{C*}>0$ and $\overline{p}_{t_1}\up{D}>0$. At $t_2$, $p_{t_2}\up{D*}>0$ and $\overline{p}_{t_2}\up{C}=0$.
        Then, with \eqref{eq:star_pc_pd} and \eqref{eq:overline_pd_pd}, we get
        \begin{equation} \label{eq:pc_pd_pd_pd}
            C_{t_1} = \rho^{t_2 - t_1} C_{t_2},
        \end{equation}
        and $\eta\up{C} \eta\up{D} = 1$ and/or $C_{t_1} = C_{t_2} = 0$.
        \item At $t_1$, $p_{t_1}\up{C*}=0$ and $\overline{p}_{t_1}\up{D}>0$. At $t_2$, $p_{t_2}\up{D*}=0$ and $\overline{p}_{t_2}\up{C}>0$.
        Then, with \eqref{eq:star_pd_pc} and \eqref{eq:overline_pd_pc}, we get
        \begin{equation} \label{eq:pd_pc_pd_pc}
            \eta\up{C} \eta\up{D} C_{t_1} = \rho^{t_2 - t_1} C_{t_2}.
        \end{equation}
        \item At $t_1$, $p_{t_1}\up{C*}=0$ and $\overline{p}_{t_1}\up{D}>0$. At $t_2$, $p_{t_2}\up{D*}>0$ and $\overline{p}_{t_2}\up{C}>0$.
        Then, with \eqref{eq:star_pd_pd} and \eqref{eq:overline_pd_pc}, we get
        \begin{equation} \label{eq:pd_pd_pd_pc}
            C_{t_1} = C_{t_2} = \rho^{t_2 - t_1} C_{t_2},
        \end{equation}
        and $\eta\up{C} \eta\up{D} = 1$ and/or $C_{t_1} = C_{t_2} = 0$.
        \item At $t_1$, $p_{t_1}\up{C*}=0$ and $\overline{p}_{t_1}\up{D}>0$. At $t_2$, $p_{t_2}\up{D*}>0$ and $\overline{p}_{t_2}\up{C}=0$.
        Then, with \eqref{eq:star_pd_pd} and \eqref{eq:overline_pd_pd}, we get
        \begin{equation} \label{eq:pd_pd_pd_pd}
            C_{t_1} = \rho^{t_2 - t_1} C_{t_2}.
        \end{equation}
    \end{itemize}

    Focusing on $\mathbf{x}^*$, we have the following results:
    \begin{enumerate}
        \item If $p_{t_1}\up{C*}>0$ and $p_{t_2}\up{D*}=0$,
        \begin{equation} \label{eq:pc_pc}
            C_{t_1} = \rho^{t_2 - t_1} C_{t_2}.
        \end{equation}
        \item If $p_{t_1}\up{C*}>0$ and $p_{t_2}\up{D*}>0$, 
        \begin{equation} \label{eq:pc_pd}
            C_{t_1} = \rho^{t_2 - t_1} \eta\up{C} \eta\up{D} C_{t_2},
        \end{equation}
        since in \eqref{eq:pc_pd_pc_pc}, \eqref{eq:pc_pd_pd_pc} and \eqref{eq:pc_pd_pd_pd}, either $\eta\up{C} \eta\up{D} = 1$ and/or $C_{t_1} = C_{t_2} = 0$ so this result stands in any case.
        \item If $p_{t_1}\up{C*}=0$ and $p_{t_2}\up{D*}=0$,
        \begin{equation} \label{eq:pd_pc}
           \eta\up{C} \eta\up{D} C_{t_1} =  \rho^{t_2 - t_1} C_{t_2}.
        \end{equation}
        \item If $p_{t_1}\up{C*}=0$ and $p_{t_2}\up{D*}>0$,
        \begin{equation} \label{eq:pd_pd}
           C_{t_1} = \rho^{t_2 - t_1} C_{t_2}.
        \end{equation}
    \end{enumerate}

    Next, we iteratively build a new solution $\mathbf{x}'$ based on $\mathbf{x}^*$, and satisfying point 1 of the lemma, as follows:
    \begin{description}
        \item[$1\up{st}$ step:] Depending on the case, we modify the solution $\mathbf{x}^*$ at $t_1$ and $t_2$ to obtain $\mathbf{x}'$. We explain below how.
        \item[$2\up{nd}$ step:] For $\mathbf{x}'$, we identify $t_1$ and $t_2$, corresponding to the first interval for which the state of energy is strictly greater. If there is no $t_1$, then $s_t' \leq \overline{s}_t$, $\forall t \in \mathcal{T}$ and the procedure terminates.
        \item[$3\up{rd}$ step:] We update $\mathbf{x}^*$ to $\mathbf{x}'$ and go back to the $1\up{st}$ step.
    \end{description}

    We show that this procedure always converges and that the solution built is indeed optimal, which completes our proof.
    We consider separately the 4 cases identified previously.

    \textit{\textbf{1. $p_{t_1}\up{C*}>0$ and $p_{t_2}\up{D*}=0$.}} \\
    We modify the solution at $t_1$ to ${p}\up{C'}_{t_1}={p}\up{C*}_{t_1}-{\epsilon}$ and at $t_2$ to ${p}\up{C'}_{t_2}={p}\up{C*}_{t_2}+\rho^{t_2 - t_1} \epsilon$, with
    \begin{equation} \label{eq:epsilon_1}
        \epsilon = \min \left\{ {p}\up{C*}_{t_1}, \, \left\{\frac{{s}_t^* - \underline{S}}{\rho^{t - t_1} \Delta t \, \eta\up{C}}, \, \forall t \in [t_1,t_2-1]\right\}, \, \frac{\overline{P}\up{C} - p\up{C*}_{t_2}}{\rho^{t_2 - t_1}} \right\},
    \end{equation}
    which we saw previously is a feasible solution, with a change in the objective of
    \begin{equation}
        \Delta t \, C_{t_1} \epsilon - \Delta t \, \rho^{t_2 - t_1} C_{t_2} \epsilon = 0,
    \end{equation}
    with \eqref{eq:pc_pc}. Therefore, this new solution is also optimal.
    
    We show that for this solution, $s_t' > \overline{s}_t$ for fewer time periods than for $\mathbf{x}^*$.
    We evaluate separately the 3 possibilities:
    \begin{itemize}
        \item ${\epsilon} = {p}\up{C*}_{t_1}$. Then $p\up{C'}_t = 0$ and we move to case 3, for which we show below that we can further modify the solution in a way that $s_t' > \overline{s}_t$ for fewer time periods than for $\mathbf{x}^*$.
        \item ${\epsilon} = \min \{ \frac{s_t^* - \underline{S}}{\rho^{t - t_1} \Delta t \, \eta\up{C}}, \, t \in [t_1, t_2-1] \}$. Then for at least one $t \in [t_1, t_2-1]$, $s_t'= \underline{S} \leq \overline{s}_t$, and the result stands.
        \item ${\epsilon} = \frac{\overline{P}\up{C} - p\up{C*}_{t_2}}{\rho^{t_2 - t_1}}$. Then ${p}\up{C'}_{t_2} = \overline{P}\up{C}$. We have $s_{t_2}'- \rho s_{t_2-1}' = s_{t_2}^* - \rho  s_{t_2-1}' = \Delta t \, \eta\up{C} \overline{P}\up{C} \geq \overline{s}_{t_2} - \rho \overline{s}_{t_2-1}$. Since $s_{t_2}^* \leq \overline{s}_{t_2}$, it means that $s_{t_2-1}' \leq \overline{s}_{t_2-1}$ and the result stands.
    \end{itemize}

    \textit{\textbf{2. $p_{t_1}\up{C*}>0$ and $p_{t_2}\up{D*}>0$.}} \\
    We modify the solution at $t_1$ to ${p}\up{C'}_{t_1}={p}\up{C*}_{t_1}-{\epsilon}$ and at $t_2$ to ${p}\up{D'}_{t_2}={p}\up{D*}_{t_2}- \rho^{t_2 - t_1} \eta\up{C} \eta\up{D} \epsilon$, with
    \begin{equation} \label{eq:epsilon_2}
        \epsilon = \min \left\{ {p}\up{C*}_{t_1}, \, \left\{\frac{{s}_t^* - \underline{S}}{\rho^{t - t_1} \Delta t \, \eta\up{C}}, \, \forall t \in [t_1,t_2-1] \right\}, \, \frac{{p}\up{D*}_{t_2}}{\rho^{t_2 - t_1} \eta\up{C} \eta\up{D}} \right\},
    \end{equation}
    which we saw previously is a feasible solution, with a change in the objective of
    \begin{equation}
       \Delta t \, C_{t_1} \epsilon - \Delta t \, \rho^{t_2 - t_1} C_{t_2} \eta\up{C} \eta\up{D} \epsilon = 0,
    \end{equation}
    with \eqref{eq:pc_pd}. Therefore, this new solution is also optimal.
    
    We show that for this solution, $s_t' > \overline{s}_t$ for fewer time periods than for $\mathbf{x}^*$.
    We evaluate separately the 3 possibilities:
    \begin{itemize}
        \item ${\epsilon} = {p}\up{C*}_{t_1}$. Then $p\up{C'}_t = 0$ and we move to case 3, for which we show below that we can further modify the solution in a way that $s_t' > \overline{s}_t$ for fewer time periods than for $\mathbf{x}^*$.
        \item ${\epsilon} = \min \{ \frac{s_t^* - \underline{S}}{\rho^{t - t_1} \Delta t \, \eta\up{C}}, \, t \in [t_1, t_2-1] \}$. Then for at least one $t \in [t_1, t_2-1]$, $s_t'= \underline{S} \leq \overline{s}_t$, and the result stands.
        \item ${\epsilon} = \frac{{p}\up{D*}_{t_2}}{\rho^{t_2 - t_1} \eta\up{C} \eta\up{D}}$. Then ${p}\up{D'}_{t_2} = 0$ and we move to case 1, for which we have shown that we can further modify the solution in a way that $s_t' > \overline{s}_t$ for fewer time periods than for $\mathbf{x}^*$. 
    \end{itemize}

    \textit{\textbf{3. $p_{t_1}\up{C*}=0$ and $p_{t_2}\up{D*}=0$.}} \\
    We modify the solution at $t_1$ to ${p}\up{D'}_{t_1}={p}\up{D*}_{t_1}+{\epsilon}$ and at $t_2$ to ${p}\up{C'}_{t_2}={p}\up{C*}_{t_2}+\frac{\rho^{t_2 - t_1}}{\eta\up{C} \eta\up{D}}\epsilon$, with
        \begin{equation} \label{eq:epsilon_3}
            \epsilon = \min \left\{\overline{P}\up{D} - {p}\up{D*}_{t_1}, \, \left\{  \frac{\eta\up{D}({s}_t^* - \underline{S})}{\rho^{t - t_1} \Delta t}, \, \forall t \in [t_1,t_2-1] \right\} , \, \frac{\eta\up{C} \eta\up{D} (\overline{P}\up{C} - p\up{C*}_{t_2})}{\rho^{t_2 - t_1}} \right\},
        \end{equation}
        which we saw previously is a feasible solution, with a change in the objective of
        \begin{equation}
            \Delta t \, C_{t_1} \epsilon - \Delta t \, C_{t_2} \frac{\rho^{t_2 - t_1}}{\eta\up{C} \eta\up{D}} \epsilon = 0,
        \end{equation}
        with \eqref{eq:pd_pc}. Therefore, this new solution is also optimal.
        
        We show that for this solution, $s_t' > \overline{s}_t$ for fewer time periods than for $\mathbf{x}^*$.
        We evaluate separately the 3 possibilities:
        \begin{itemize}
            \item ${\epsilon} = \overline{P}\up{D} - {p}\up{D*}_{t_1}$. Then $p\up{D'}_{t_1} = \overline{P}\up{D}$. We have $s_{t_1}'- \rho s_{t_1-1}' = s_{t_1}' - \rho s_{t_1-1}^* = - \Delta t \frac{1}{\eta\up{D}} \overline{P}\up{D} \leq \overline{s}_{t_1} - \rho \overline{s}_{t_1-1}$. Since $s_{t_1-1}^* \leq \overline{s}_{t_1-1}$, it means that $s_{t_1}' \leq \overline{s}_{t_1}$ and the result stands.
            \item ${\epsilon} = \min \{ \eta\up{D} \frac{{s}_t^* - \underline{S}}{\rho^{t - t_1} \Delta t}, \, t \in [t_1, t_2-1] \}$. Then for at least one $t \in [t_1, t_2-1]$, $s_t'= \underline{S} \leq \overline{s}_t$, and the result stands.
            \item ${\epsilon} = \frac{\eta\up{C} \eta\up{D} (\overline{P}\up{C} - p\up{C*}_{t_2})}{\rho^{t_2 - t_1}}$. Then ${p}\up{C'}_{t_2} = \overline{P}\up{C}$. We have $s_{t_2}'- \rho s_{t_2-1}' = s_{t_2}^* - \rho s_{t_2-1}' = \Delta t \, \eta\up{C} \overline{P}\up{C} \geq \overline{s}_{t_2} - \rho \overline{s}_{t_2-1}$. Since $s_{t_2}^* \leq \overline{s}_{t_2}$, it means that $s_{t_2-1}' \leq \overline{s}_{t_2-1}$ and the result stands.
        \end{itemize}

    \textit{\textbf{4. $p_{t_1}\up{C*}=0$ and $p_{t_2}\up{D*}>0$.}} \\
    We modify the solution at $t_1$ to ${p}\up{D'}_{t_1}={p}\up{D*}_{t_1}+{\epsilon}$ and at $t_2$ to ${p}\up{D'}_{t_2}={p}\up{D*}_{t_2}-\rho^{t_2 - t_1} \epsilon$, with
        \begin{equation} \label{eq:epsilon_4}
            \epsilon = \min \left\{\overline{P}\up{D} - {p}\up{D*}_{t_1}, \, \left\{ \eta\up{D} \frac{{s}_t^* - \underline{S}}{\rho^{t - t_1} \Delta t}, \, \forall t \in [t_1,t_2-1] \right\} , \, \frac{p\up{D*}_{t_2}}{\rho^{t_2 - t_1}} \right\},
        \end{equation}
        which we saw previously is a feasible solution, with a change in the objective of
        \begin{equation}
            \Delta t \, C_{t_1} \epsilon - \Delta t \, \rho^{t_2 - t_1} C_{t_2} \epsilon = 0,
        \end{equation}
        with \eqref{eq:pd_pd}. Therefore, this new solution is also optimal.
        
        We show that for this solution, $s_t' > \overline{s}_t$ for fewer time periods than for $\mathbf{x}^*$.
        We evaluate separately the 3 possibilities:
        \begin{itemize}
            \item ${\epsilon} = \overline{P}\up{D} - {p}\up{D*}_{t_1}$. Then $p\up{D'}_{t_1} = \overline{P}\up{D}$. We have $s_{t_1}'- \rho s_{t_1-1}' = s_{t_1}' - \rho s_{t_1-1}^* = - \Delta t \frac{1}{\eta\up{D}} \overline{P}\up{D} \leq \overline{s}_{t_1} - \rho \overline{s}_{t_1-1}$. Since $s_{t_1-1}^* \leq \overline{s}_{t_1-1}$, it means that $s_{t_1}' \leq \overline{s}_{t_1}$ and the result stands.
            \item ${\epsilon} = \min \{ \eta\up{D} \frac{{s}_t^* - \underline{S}}{\Delta t}, \, t \in [t_1, t_2-1] \}$. Then for at least one $t \in [t_1, t_2-1]$, $s_t'= \underline{S} \leq \overline{s}_t$, and the result stands.
            \item ${\epsilon} = \frac{p\up{D*}_{t_2}}{\rho^{t_2 - t_1}}$. Then ${p}\up{D'}_{t_2} = 0$ and we move to case 3, for which we have shown that we can further modify the solution in a way that $s_t' > \overline{s}_t$ for fewer time periods than for $\mathbf{x}^*$.
        \end{itemize}

    By induction, starting again the procedure on the new solution, with updated $t_1$ and $t_2$, we can build a solution $\mathbf{x}' \in \mathcal{X}^*$ for which $s_t' \leq \overline{s}_t$, $\forall t \in \mathcal{T}$, which proves point 1 of the lemma.
    
    The proof for point 2 of the lemma is similar.
\end{proof}

Based on Lemma~\ref{lemma1}, the following can also be derived:
\begin{corollary} \label{corol1}
    For any $\overline{\mathbf{x}} \in \overline{\mathcal{X}}$ and $\underline{\mathbf{x}} \in \underline{\mathcal{X}}$, such that $\underline{s}_t \leq \overline{s}_t$, $\forall t \in \mathcal{T}$, and for any $S\up{end}$, $\underline{S}_T \leq S\up{end} \leq \overline{S}_T$, $\exists \, \mathbf{x}^* \in \mathcal{X}^*$ such that $\underline{s}_t \leq s_t^* \leq \overline{s}_t$, $\forall t \in \mathcal{T}$.
\end{corollary}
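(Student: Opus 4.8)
The plan is to build the desired sandwiched solution in two monotone sweeps, exploiting the fact that the construction underlying Lemma~\ref{lemma1} is one-directional in each of its two parts. First I would invoke point~1 of Lemma~\ref{lemma1} with the given $\overline{\mathbf{x}}$ and $S\up{end}$ to obtain a solution $\mathbf{y} \in \mathcal{X}^*$ with $y_t \leq \overline{s}_t$ for all $t \in \mathcal{T}$. This settles the upper inequality; what remains is to raise $\mathbf{y}$ above the lower envelope $\underline{s}_t$ \emph{without} destroying the bound just obtained. The key structural observation is that the modifications used in the proof of Lemma~\ref{lemma1} act only on the interval $[t_1,t_2-1]$ on which the envelope is violated and leave the trajectory unchanged elsewhere (the solution is brought back onto the same trajectory at $t_2$, and the endpoint $s_T = S\up{end}$ is never touched). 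Moreover, the point-1 modifications only lower $s_t$ on such an interval, while the symmetric point-2 modifications only raise it. Consequently a raising sweep can never create a new lower-bound violation, and it leaves both $s_T = S\up{end}$ and the objective value unchanged.

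Second, I would re-run the point-2 procedure starting from $\mathbf{y}$, with one safeguard: in each step I add to the minimum defining $\epsilon$ (the analogue of \eqref{eq:epsilon_1}--\eqref{eq:epsilon_4}) the extra term that keeps $s_t' \leq \overline{s}_t$ on the modified interval, rather than merely $s_t' \leq \overline{S}$. On any interval where $y_t < \underline{s}_t$, the hypothesis $\underline{s}_t \leq \overline{s}_t$ gives $\overline{s}_t - y_t > 0$, so this extra term is strictly positive and does not force $\epsilon = 0$; hence each raising step is genuine. As in Lemma~\ref{lemma1}, optimality is preserved because the price relations \eqref{eq:pc_pc}--\eqref{eq:pd_pd} hold at the crossing points, making the objective change exactly zero, so the iterate stays in $\mathcal{X}^*$. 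For the progress argument I would track the number of periods with $s_t < \underline{s}_t$: the cases in which $\epsilon$ is capped by an action bound at $t_1$ or $t_2$ chain to another case exactly as in the original proof, while in the case where $\epsilon$ is capped by the interval term some period reaches $s_t' = \overline{s}_t \geq \underline{s}_t$ and therefore leaves the set of lower-bound violations. Since raising never introduces upper-bound violations (the cap) and never introduces lower-bound violations, this count strictly decreases, and after finitely many sweeps we reach $\mathbf{x}^* \in \mathcal{X}^*$ with $\underline{s}_t \leq s_t^* \leq \overline{s}_t$ for all $t$.

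The main obstacle is precisely the interaction between the two bounds: a priori, enforcing $s_t^* \geq \underline{s}_t$ could reintroduce violations of $s_t^* \leq \overline{s}_t$, and then the naive strategy of applying both parts of Lemma~\ref{lemma1} separately would fail, since it yields two distinct optimal solutions rather than a single one lying between the envelopes. What makes the two sweeps compatible is monotonicity together with the capping term: because the point-2 modifications only increase the state of energy and are capped at $\overline{s}_t$, the upper bound established by the first sweep is an invariant of the second. The remaining work is routine bookkeeping -- checking that the strict positivity of $\epsilon$, the vanishing objective change, and the termination of the case-chaining all carry over verbatim from Lemma~\ref{lemma1} once the single extra term is inserted into $\epsilon$ -- and I would defer it to the case analysis already carried out there.
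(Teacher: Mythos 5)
Your proof is correct and takes essentially the same approach as the paper's: both arguments apply one part of Lemma \ref{lemma1} to secure one envelope and then re-run the opposite modification procedure with the $\epsilon$-caps tightened from the capacity bound to the envelope itself (the paper replaces $\underline{S}$ by $\underline{s}_t$, you replace $\overline{S}$ by $\overline{s}_t$), relying on the one-directional nature of the modifications to preserve the bound already established. The only difference is the order—the paper first obtains $s_t^* \geq \underline{s}_t$ and sweeps upper-envelope violations downward, while you first obtain $s_t^* \leq \overline{s}_t$ and sweep upward—which is an immaterial symmetry.
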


\begin{proof}
    First, given $\overline{\mathbf{x}} \in \overline{\mathcal{X}}$, we know from Lemma~\ref{lemma1} that there exists $\underline{\mathbf{x}} \in \underline{\mathcal{X}}$, such that $\underline{s}_t \leq \overline{x}_t$, $\forall t \in \mathcal{T}$.
    For the second part, using Lemma~\ref{lemma1}, we know that there exists $\mathbf{x}^* \in \mathcal{X}^*$ such that ${s}_t^* \geq \underline{s}_t$, $\forall t \in \mathcal{T}$.
    Supposing that there exists $t \in \mathcal{T}$ such that $s_{t}^* > \overline{s}_t$, we can build a new solution $\mathbf{x}' \in \mathcal{X}^*$ such that $\underline{s}_t \leq s_t' \leq \overline{x}_t$, $\forall t \in \mathcal{T}$, similarly to what was done in the proof of Lemma~\ref{lemma1}.
    The only difference is that the $\epsilon$ used to modify the solution should be such that it stays above $\underline{s}_t$ instead of $\underline{S}$.
    For example, \eqref{eq:epsilon_1} becomes
    \begin{equation}
        \epsilon = \min \left\{ {p}\up{C*}_{t_1}, \, \left\{\frac{{s}_t^* - \underline{s}_t}{\rho^{t - t_1} \Delta t \, \eta\up{C}}, \, \forall t \in [t_1,t_2-1]\right\}, \, \frac{\overline{P}\up{C} - p\up{C*}_{t_2}}{\rho^{t_2 - t_1}} \right\}
    \end{equation}
    where $\underline{S}$ has been replaced by $\underline{s}_t$. 
    The same goes for \eqref{eq:epsilon_2}, \eqref{eq:epsilon_3} and \eqref{eq:epsilon_4}.
\end{proof}

We also need the following result:

\begin{lemma} \label{lemma2}
    If there exist $\overline{\mathbf{x}} \in \overline{\mathcal{X}}$, $\underline{\mathbf{x}} \in \underline{\mathcal{X}}$ and $ \tau \in \mathcal{T}$ such that $\underline{s}_\tau = \overline{s}_\tau$, then there exist $\overline{\mathbf{x}} \in \overline{\mathcal{X}}$ and $\underline{\mathbf{x}} \in \underline{\mathcal{X}}$ such that $\underline{s}_t = \overline{s}_t$, $\forall t \in [1,\tau]$ and $\underline{s}_t \leq \overline{s}_t$, $\forall t \in [\tau+1, T]$.
\end{lemma}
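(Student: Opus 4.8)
The plan is to exploit the additive structure of the objective together with the fact that the two trajectories meet at $\tau$, reducing the statement to a decomposition at $\tau$ plus a single application of Lemma~\ref{lemma1} to the tail. Write $\sigma := \underline{s}_\tau = \overline{s}_\tau$. First I would invoke the principle of optimality: since the objective \eqref{eq:obj} is a sum over periods and the only coupling between $[1,\tau]$ and $[\tau+1,T]$ is the state $s_\tau$, the restriction $\overline{\mathbf{x}}|_{[1,\tau]}$ must be optimal for the subproblem on $[1,\tau]$ with initial level $S\up{init}$ and final level $\sigma$ (otherwise one could swap in a more profitable early part ending at $\sigma$ and improve the full objective, contradicting $\overline{\mathbf{x}}\in\overline{\mathcal{X}}$), and likewise for $\underline{\mathbf{x}}|_{[1,\tau]}$. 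As these two subproblems are identical (same prices, bounds, initial and final levels), they share an optimal value $V_A$ attained by both restrictions. By the same reasoning $\overline{\mathbf{x}}|_{[\tau+1,T]}$ and $\underline{\mathbf{x}}|_{[\tau+1,T]}$ are optimal for the tail subproblem on $[\tau+1,T]$ with initial level $\sigma$ and final levels $\overline{S}_T$ and $\underline{S}_T$, respectively.

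Next I would show that the tail is itself an instance of \eqref{pb:opt} to which Lemma~\ref{lemma1} applies unchanged. The key claim is that $\overline{S}_T$ and $\underline{S}_T$ are exactly the maximum and minimum levels reachable at $T$ from $\sigma$ over $[\tau+1,T]$. For the upper value, the tail of $\overline{\mathbf{x}}$ certifies reachability of $\overline{S}_T$, so the maximum is at least $\overline{S}_T$; conversely, since $\sigma=\overline{s}_\tau$ cannot exceed the maximum level reachable at $\tau$ from $S\up{init}$, charging maximally from $\sigma$ over the remaining $T-\tau$ periods cannot reach more than $\overline{S}_T$, with the capacity cap handled by the $\min$ in \eqref{eq:sT_max}. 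A symmetric argument using \eqref{eq:sT_min} gives the minimum equal to $\underline{S}_T$. Treating $[\tau+1,T]$ as a fresh horizon with initial level $\sigma$, the tails $\overline{\mathbf{x}}|_{[\tau+1,T]}$ and $\underline{\mathbf{x}}|_{[\tau+1,T]}$ then play the roles of the max- and min-terminal optima, so point~1 of Lemma~\ref{lemma1} yields an optimal tail $\mathbf{y}$ ending at $\underline{S}_T$ with $y_t\le\overline{s}_t$ for all $t\in[\tau+1,T]$.

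Finally I would splice and verify. Define $\underline{\mathbf{x}}'$ by taking the decisions of $\overline{\mathbf{x}}$ on $[1,\tau]$ and those of $\mathbf{y}$ on $[\tau+1,T]$; the two pieces agree at the level $\sigma$ attained at $\tau$, so $\underline{\mathbf{x}}'$ is feasible (complementarity holds period-by-period), and its objective equals $V_A$ plus the optimal tail value for terminal $\underline{S}_T$, which is precisely the optimal value of $\textbf{F}(\mathcal{T},\textbf{C},\underline{S}_T)$; hence $\underline{\mathbf{x}}'\in\underline{\mathcal{X}}$. Keeping $\overline{\mathbf{x}}$ as the upper solution, the copied decisions and common initial level give $\underline{s}'_t=\overline{s}_t$ for $t\in[1,\tau]$, while $\underline{s}'_t=y_t\le\overline{s}_t$ for $t\in[\tau+1,T]$, which is exactly the desired conclusion.

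The main obstacle I anticipate is the reachability claim of the second step: certifying that the \emph{global} extreme terminals $\overline{S}_T,\underline{S}_T$ coincide with the extremes reachable from the meeting level $\sigma$ on the shortened horizon, so that Lemma~\ref{lemma1} can be invoked verbatim rather than re-proved. The leakage factor $\rho<1$ and its interaction with the capacity bound inside the $\min$ and $\max$ of \eqref{eq:sT_max}–\eqref{eq:sT_min} are what require care, since one must rule out that $\sigma$ lies strictly below the maximal charging trajectory at $\tau$; the Bellman decomposition and the splicing are otherwise routine once this bridge is established.
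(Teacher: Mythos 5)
Your proof is correct and follows essentially the same route as the paper's: decompose both problems at the meeting time $\tau$ (the paper does this by augmenting each problem with the constraint $s_\tau = S$, you via the principle of optimality), observe that the two head subproblems on $[1,\tau]$ coincide, apply Lemma~\ref{lemma1} to the tail subproblems started from the common level, and splice the pieces back together. Your additional verification that $\underline{S}_T$ and $\overline{S}_T$ are exactly the extreme reachable levels at $T$ from $\sigma$ over $[\tau+1,T]$ --- a point the paper leaves implicit in ``the results from Lemma~\ref{lemma1} can be applied'' --- is sound and makes the invocation of Lemma~\ref{lemma1} verbatim rather than by analogy.
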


\begin{proof}
    Let's identify as $S$ the state of energy at $\tau$ for which $\underline{s}_\tau = \overline{s}_\tau = S$.
    Augmenting problem $\textbf{F}(\mathcal{T}, \textbf{C}, \underline{S}_T)$ with the constraint $s_\tau = S$ will return the same optimal objective value. For this augmented problem, we can solve independently for $t \in [1,\tau]$ and for $t \in [\tau+1,T]$, since in the only constraint linking those two periods, $s_{\tau+1} = \rho s_\tau +  \Delta t \left(\eta\up{C} p_\tau\up{C} - \frac{1}{\eta\up{D}} p_\tau\up{D}\right)$, $s_\tau$ can be replaced by $S$.
    Similarly, augmenting $\textbf{F}(\mathcal{T}, \textbf{C}, \overline{S}_T)$ with the constraint $s_\tau = S$ will return the same optimal objective value, and for this augmented problem, we can solve independently for $t \in [1,\tau]$ and for $t \in [\tau+1,T]$.
    The two problems are the same over the first period, i.e. for $t \in [1,\tau]$, so their optimal solutions are the same.
    Over the second period, as the starting point is the same, the results from Lemma~\ref{lemma1} can be applied for $t \in [\tau+1,T]$ to show that there exists $\underline{\mathbf{x}} \in \underline{\mathcal{X}}$, such that $\underline{s}_t \leq \overline{x}_t$, $\forall t \in [\tau+1,T]$.
    Combining the solutions over the two periods, we get the desired result.
\end{proof}

With Lemma~\ref{lemma2}, if $\exists \, \underline{\mathbf{x}} \in \underline{\mathcal{X}}$ and $\exists \, \overline{\mathbf{x}} \in \overline{\mathcal{X}}$ such that $\underline{s}_H = \overline{s}_H$, then $\exists \, \overline{\mathbf{x}}' \in \overline{\mathcal{X}}$ and $\exists \, \underline{\mathbf{x}}' \in \underline{\mathcal{X}}$ such that $\underline{s}'_t = \overline{s}'_t$, $\forall t \in [1,\tau]$ and $\underline{s}'_t \leq \overline{s}'_t$, $\forall t \in [1,\tau]$.
With Corollary~\ref{corol1}, for any $S\up{end}$, $\underline{S}_T \leq S\up{end} \leq \overline{S}_T$, $\exists \, \mathbf{x}^* \in \mathcal{X}^*$ such that $\underline{s}'_t \leq s_t^* \leq \overline{s}'_t$, $\forall t \in \mathcal{T}$, and in particular, such that $\underline{s}'_t = s_t^* = \overline{s}'_t$, $\forall t \in [1,H]$.

Solving the problems over a horizon longer by one time period, $T+1$, the optimal level at $T$, called $S$, obtained from solving $\textbf{F}([1,T+1], \textbf{C}, \underline{S}_{T+1})$ is such that $\underline{S}_T \leq S \leq \overline{S}_T$, since $\underline{S}_T$ and $\overline{S}_T$ are the lowest and highest reachable levels at $T$. We can add the constraint $s_T = S$ and split the problem over the two intervals, $[1,T]$ and $[T+1]$ without changing the optimal value of the objective function. The problem on $[1,T]$ is then $\textbf{F}(\mathcal{T}, \textbf{C}, {S})$, for which we have already shown that the solution over the decision horizon will be the same.
The same applies for $\textbf{F}([1,T+1], \textbf{C}, \overline{S}_{T+1})$, and therefore for $\textbf{F}([1,T+1], \textbf{C}, {S}\up{end})$, with any ${S}\up{end}$, such that  $\underline{S}_{T+1} \leq {S}\up{end} \leq \overline{S}_{T+1}$.
By induction, we obtain that the solution over the decision horizon is optimal for the infinite horizon problem.

On the other hand, if $T$ is a forecast horizon, by definition, $\exists \, \underline{\mathbf{x}} \in \underline{\mathcal{X}}$ and $\exists \, \overline{\mathbf{x}} \in \overline{\mathcal{X}}$ such that $\underline{s}_H = \overline{s}_H$.

\section{Proof of Proposition~\ref{prop2}} \label{app:proof3}

For a given $s_{H} \in [\underline{s}_H,\overline{s}_H]$, the difference in objective value with respect to the solution of the infinite-horizon problem is the sum of the difference due to the divergent schedule over the decision horizon and the difference due to starting the remainder of the horizon with a different storage level:
\begin{equation}
    \Delta Z = Z\up{*,DH} - Z\up{DH} + Z\up{*,R} - Z\up{R}
\end{equation}
where $Z\up{*,DH}$ is the value of the objective function over the decision horizon for the infinite-horizon problem, and $Z\up{*,R}$ and $Z\up{R}$ are the values of the objective function over the rest of the horizon for the infinite-horizon problem and for the problem actually solved, respectively.

It is possible to bound $Z\up{*,DH}$ from above with $Z\up{opt,DH}$, the maximum profit obtainable if considering the decision horizon in isolation and allowing the final storage at the end of the decision horizon to be a variable bounded between $\underline{s}_H$ and $\overline{s}_H$, the interval within which the optimal storage level in the infinite horizon problem lies. 

Over the rest of the horizon, we can define two extreme cases.  
In the first case, $s_H \geq s_H^*$, where $s_H^*$ is the optimal level at the end of the decision horizon in the infinite-horizon problem, and we miss an opportunity to charge the difference later at a lower price, and in the worst case at $\underline{C}$. Therefore we have 
\begin{equation} \label{eq:subopt_high}
    Z\up{*,R} - Z\up{R} \leq - \underline{C} \frac{1}{\eta\up{C}} (s_H - s_H^*) \leq - \underline{C} \frac{1}{\eta\up{C}} (s_H - \underline{s}_H).
\end{equation}
In the second case, $s_H \leq s_H^*$, and we miss an opportunity to discharge the difference $s_H^*-s_H$ later at a higher price, and in the worst case at $\overline{C}$. Therefore we have 
\begin{equation} \label{eq:subopt_low}
    Z\up{*,R} - Z\up{R} \leq \overline{C} \eta\up{D} (s_H^* - s_H) \leq \overline{C} \eta\up{D} (\overline{s}_H - s_H).
\end{equation}

Combining \eqref{eq:subopt_high} and \eqref{eq:subopt_low} gives
\begin{equation} \label{eq:subopt1_proof}
    \Delta Z \leq Z\up{opt,DH} - Z\up{DH} + \max \{- \underline{C} \frac{1}{\eta\up{C}} (s_H - \underline{s}_H),  \overline{C} \eta\up{D} (\overline{s}_H - s_H)\}.
\end{equation}

\section{Proof of Proposition~\ref{prop1}} \label{app:proof2}

We prove the result by contradiction. Consider $T \in \mathbb{N}^+$, such that the expression in \eqref{eq:cond2} is strictly positive and suppose that $T$ is a forecast horizon. 
We then have the following:
\begin{equation} \label{eq:proof1}
    \overline{S}  - \underline{S} - \left ( \sum_{t=0}^{T-H-1} \rho^t \right ) \left(\Delta t \, \eta\up{C} \overline{P}\up{C} + \frac{\Delta t}{\eta\up{D}} \overline{P}\up{D} \right) > 0,
\end{equation}
\begin{equation} \label{eq:proof2}
    \rho^{T} S\up{init} - \underline{S} + \Delta t \, \eta\up{C}  \overline{P}\up{C} \left ( \sum_{t=T-H}^{T-1} \rho^t \right ) - \frac{\Delta t}{\eta\up{D}} \overline{P}\up{D} \left ( \sum_{t=0}^{T-H-1} \rho^t \right ) > 0,
\end{equation}
and
\begin{equation} \label{eq:proof3}
     \overline{S} - \rho^{T} S\up{init} -  \Delta t \, \eta\up{C}  \overline{P}\up{C} \left ( \sum_{t=0}^{T-H-1} \rho^t \right ) + \frac{\Delta t}{\eta\up{D}} \overline{P}\up{D} \left ( \sum_{t=T-H}^{T-1} \rho^t \right ) > 0,
\end{equation}
Since $T$ is a forecast horizon, with Theorem~\ref{theorem1}, $\exists \, \underline{\mathbf{x}} \in \underline{\mathcal{X}}$ and $\exists \, \overline{\mathbf{x}} \in \overline{\mathcal{X}}$ such that $\underline{s}_H = \overline{s}_H$. We consider such $\underline{\mathbf{x}}$ and $\overline{\mathbf{x}}$, and we introduce ${S}$ to represent this value, i.e. ${S}=\underline{s}_H =\overline{s}_H$. Tight lower and upper bounds on ${S}$ can be obtained by considering that the maximum quantity is discharged during the whole decision horizon or until the minimum level is reached:
\begin{equation} \label{eq:proof4}
    {S} \geq \max \left \{\underline{S}, \rho^{H} S\up{init} - \frac{\Delta t}{\eta\up{D}} \overline{P}\up{D} \sum_{t=0}^{H-1} \rho^t \right \},
\end{equation}
and by considering that the maximum quantity is discharged during the whole decision horizon or until the minimum level is reached:
\begin{equation} \label{eq:proof5}
    {S} \leq \min \left \{\overline{S}, \rho^{H} S\up{init} + \Delta t \, \eta\up{C}  \overline{P}\up{C} \sum_{t=0}^{H-1} \rho^t \right \}.
\end{equation}

Moreover, we know that the minimum and the maximum reachable levels at the end of the planning horizon, $\underline{S}_T$ and $\overline{S}_T$, can be reached from ${S}$. In other words, if discharging the maximum quantity after the end of the decision horizon and until the end of the planning horizon $\underline{S}_T$ should at least be reached:
\begin{equation} \label{eq:proof6}
    \rho^{T-H} {S} -  \frac{\Delta t}{\eta\up{D}} \overline{P}\up{D} \left ( \sum_{t=0}^{T-H-1} \rho^t \right ) \leq \underline{S}_T.
\end{equation}
Similarly, if charging the maximum quantity after the end of the decision horizon and until the end of the planning horizon, $\overline{S}_T$ should at least be reached:
\begin{equation} \label{eq:proof7}
    \rho^{T-H} {S} + \Delta t \eta\up{C} \overline{P}\up{C} \left ( \sum_{t=0}^{T-H-1} \rho^t \right ) \geq \overline{S}_T.
\end{equation}

Next we use the fact that $\underline{S}_T = \max \left \{\underline{S}, \rho^{T} S\up{init} - \frac{\Delta t}{\eta\up{D}} \overline{P}\up{D} \sum_{t=0}^{T-1} \rho^t \right \}$ and $\overline{S}_T = \min\left \{\overline{S}, \rho^{T} S\up{init} +\right.$ $ \left. \Delta t \, \eta\up{C} \overline{P}\up{C} \sum_{t=0}^{T-1} \rho^t \right \}$ to define the four following cases:
\begin{enumerate}
    \item $\underline{S}_T = \underline{S}$ and $\overline{S}_T = \overline{S}$ 
    \item $\underline{S}_T = \underline{S}$ and $\overline{S}_T = \rho^{T} S\up{init} + \Delta t \eta\up{C} \overline{P}\up{C} \sum_{t=0}^{T-1} \rho^t$ 
    \item $\underline{S}_T = \rho^{T} S\up{init} - \frac{\Delta t}{\eta\up{D}} \overline{P}\up{D} \sum_{t=0}^{T-1} \rho^t$ and $\overline{S}_T = \overline{S}$ 
    \item $\underline{S}_T = \rho^{T} S\up{init} - \frac{\Delta t}{\eta\up{D}} \overline{P}\up{D} \sum_{t=0}^{T-1} \rho^t$ and $\overline{S}_T = \rho^{T} S\up{init} + \Delta t \, \eta\up{C} \overline{P}\up{C} \sum_{t=0}^{T-1} \rho^t$
\end{enumerate}

In Cases 1 and 2, \eqref{eq:proof6} becomes
\begin{equation}
    \rho^{T-H} {S} -  \frac{\Delta t}{\eta\up{D}} \overline{P}\up{D} \left ( \sum_{t=0}^{T-H-1} \rho^t \right ) \leq \underline{S},
\end{equation}
which can be rearranged as
\begin{equation} \label{eq:proof8}
    {S} \leq \rho^{H-T} \underline{S} + \frac{\Delta t}{\eta\up{D}} \overline{P}\up{D} \rho^{H-T} \left ( \sum_{t=0}^{T-H-1} \rho^t \right ).
\end{equation}

In Cases 3 and 4, \eqref{eq:proof6} becomes
\begin{equation}
    \rho^{T-H} {S} -  \frac{\Delta t}{\eta\up{D}} \overline{P}\up{D} \left ( \sum_{t=0}^{T-H-1} \rho^t \right ) \leq  \rho^{T} S\up{init} - \frac{\Delta t}{\eta\up{D}} \overline{P}\up{D} \sum_{t=0}^{T-1} \rho^t,
\end{equation}
which can be rearranged as
\begin{equation}
    {S} \leq \rho^{H}S\up{init} - \frac{\Delta t}{\eta\up{D}} \overline{P}\up{D} \sum_{t=0}^{H-1} \rho^t.
\end{equation}
With \eqref{eq:proof4}, we get that 
\begin{equation} \label{eq:proof9}
    {S} = \rho^{H}S\up{init} - \frac{\Delta t}{\eta\up{D}} \overline{P}\up{D} \sum_{t=0}^{H-1} \rho^t.
\end{equation}

In Cases 1 and 3, \eqref{eq:proof7} becomes
\begin{equation}
    \rho^{T-H} {S} + \Delta t \, \eta\up{C} \overline{P}\up{C} \left ( \sum_{t=0}^{T-H-1} \rho^t \right ) \geq \overline{S},
\end{equation}
which can be rearranged as
\begin{equation} \label{eq:proof10}
    {S} \geq \rho^{H-T} \overline{S} - \Delta t \, \eta\up{C} \overline{P}\up{C} \rho^{H-T} \left ( \sum_{t=0}^{T-H-1} \rho^t \right ).
\end{equation}

In Cases 2 and 4, \eqref{eq:proof7} becomes
\begin{equation}
    \rho^{T-H} {S} + \Delta t \, \eta\up{C} \overline{P}\up{C} \left ( \sum_{t=0}^{T-H-1} \rho^t \right ) \geq \rho^{T} S\up{init} + \Delta t \, \eta\up{C} \overline{P}\up{C} \sum_{t=0}^{T-1} \rho^t,
\end{equation}
which can be rearranged as
\begin{equation}
    {S} \geq \rho^{H} S\up{init} + \Delta t \, \eta\up{C}  \overline{P}\up{C} \sum_{t=0}^{H-1} \rho^t.
\end{equation}
With \eqref{eq:proof5}, we get that 
\begin{equation} \label{eq:proof11}
    {S} = \rho^{H} S\up{init} + \Delta t \, \eta\up{C}  \overline{P}\up{C} \sum_{t=0}^{H-1} \rho^t.
\end{equation}

\begin{description}
    \item[Case 1:] In the first case, \eqref{eq:proof8} and \eqref{eq:proof10} apply. This is possible only if $\rho^{H-T} \overline{S} - \Delta t \, \eta\up{C} \overline{P}\up{C} \rho^{H-T} \left ( \sum_{t=0}^{T-H-1} \rho^t \right ) \leq \rho^{H-T} \underline{S} + \frac{\Delta t}{\eta\up{D}} \overline{P}\up{D} \rho^{H-T} \left ( \sum_{t=0}^{T-H-1} \rho^t \right )$. We can rearrange as 
    \begin{equation}
        \overline{S} - \underline{S}  - \left ( \sum_{t=0}^{T-H-1} \rho^t \right ) \left(\Delta t \, \eta\up{C} \overline{P}\up{C} + \frac{\Delta t}{\eta\up{D}} \overline{P}\up{D} \right) \leq 0,
    \end{equation} 
    which is not possible because of \eqref{eq:proof1}.
    \item[Case 2:] In the second case, \eqref{eq:proof8} and \eqref{eq:proof11} apply. Using \eqref{eq:proof11}, we can replace in \eqref{eq:proof8}:
    \begin{equation}
        \rho^{H} S\up{init} + \Delta t \, \eta\up{C}  \overline{P}\up{C} \sum_{t=0}^{H-1} \rho^t \leq \rho^{H-T} \underline{S} + \frac{\Delta t}{\eta\up{D}} \overline{P}\up{D} \rho^{H-T} \left ( \sum_{t=0}^{T-H-1} \rho^t \right ).
    \end{equation}
    Rearranging gives 
    \begin{equation}
        \rho^{T} S\up{init} - \underline{S} + \Delta t \, \eta\up{C}  \overline{P}\up{C} \left ( \sum_{t=T-H}^{T-1} \rho^t \right ) - \frac{\Delta t}{\eta\up{D}} \overline{P}\up{D} \left ( \sum_{t=0}^{T-H-1} \rho^t \right ) \leq 0,
    \end{equation} 
    which is not possible because of \eqref{eq:proof2}.
    \item[Case 3:] In the third case, \eqref{eq:proof9} and \eqref{eq:proof10} apply. Using \eqref{eq:proof9}, we can replace in \eqref{eq:proof10}:
    \begin{equation}
        \rho^{H}S\up{init} - \frac{\Delta t}{\eta\up{D}} \overline{P}\up{D} \sum_{t=0}^{H-1} \rho^t \geq \rho^{H-T} \overline{S} - \Delta t \, \eta\up{C} \overline{P}\up{C} \rho^{H-T} \left ( \sum_{t=0}^{T-H-1} \rho^t \right ).
    \end{equation}
    Rearranging gives 
    \begin{equation}
        \overline{S} - \rho^{T} S\up{init} -  \Delta t \, \eta\up{C}  \overline{P}\up{C} \left ( \sum_{t=0}^{T-H-1} \rho^t \right ) + \frac{\Delta t}{\eta\up{D}} \overline{P}\up{D} \left ( \sum_{t=T-H}^{T-1} \rho^t \right ) \leq 0,
    \end{equation} 
    which is not possible because of \eqref{eq:proof3}.
    \item[Case 4:] In the fourth case, \eqref{eq:proof9} and \eqref{eq:proof11} apply. We thus have:
    \begin{equation}
        \rho^{H}S\up{init} - \frac{\Delta t}{\eta\up{D}} \overline{P}\up{D} \sum_{t=0}^{H-1} \rho^t = \rho^{H} S\up{init} + \Delta t \, \eta\up{C}  \overline{P}\up{C} \sum_{t=0}^{H-1} \rho^t,
    \end{equation}
    which means that
    \begin{equation}
         - \frac{1}{\eta\up{D}} \overline{P}\up{D} =  \eta\up{C} \overline{P}\up{C},
    \end{equation}
    which is not possible since all are strictly positive.
\end{description}

We showed that in all the cases there is a contradiction, therefore, $T$ is not a forecast horizon.
\end{appendices}

\bibliography{ref}

\end{document}